\newcommand{\vep}{\varepsilon}
\newcommand{\eset}{\varnothing}
\newcommand{\ul}{\underline}
\DeclareMathOperator{\supp}{supp}
\newcommand{\sumint}[1]{\sum_p \frac{\log p}{\sqrt{p}} \left(\frac{8d}{p}\right) \widehat{#1} \left(\frac{\log p}{\log X} \right)}
\newcommand{\ml}[1]{\mathcal{#1}}
\newcommand{\ba}{\begin{align}}
\newcommand{\eal}{\end{align}}
\newcommand{\op}{\operatorname}
\newtheorem{theorem}{Theorem}[section]
\newtheorem{conjecture}[theorem]{Conjecture}
\newtheorem{lemma}[theorem]{Lemma}
\newtheorem{proposition}[theorem]{Proposition}
\theoremstyle{definition}
\newtheorem{definition}[theorem]{Definition}
\newtheorem{rmk}[theorem]{Remark}
\theoremstyle{theorem}
\newcommand{\kommentar}[1]{}
\newcommand{\zeev}{Ze\'{e}v}
\newcommand\be{\begin{equation}}
\newcommand\ee{\end{equation}}
\newcommand\bea{\begin{eqnarray}}
\newcommand\eea{\end{eqnarray}}
\newcommand\bi{\begin{itemize}}
\newcommand\ei{\end{itemize}}
\newcommand\ben{\begin{enumerate}}
\newcommand\een{\end{enumerate}}
\newcommand\bc{\begin{center}}
\newcommand\ec{\end{center}}
\newcommand{\R}{\ensuremath{\mathbb{R}}}
\newcommand{\C}{\ensuremath{\mathbb{C}}}
\newcommand{\N}{\mathbb{N}}
\newtheorem{thm}{Theorem}[section]
\newtheorem{prop}[thm]{Proposition}
\theoremstyle{definition}
\newtheorem{defi}[thm]{Definition}
\newtheorem{rek}[thm]{Remark}
\newcommand{\twocase}[5]{#1 \begin{cases} #2 & \text{{\rm #3}}\\ #4
&\text{{\rm #5}} \end{cases}   }
\numberwithin{equation}{section}
\begin{document}


\baselineskip=17pt



\title[$n$-level densities of Dirichlet $L$-functions]{The $n$-level densities of low-lying zeros of quadratic Dirichlet $L$-functions}

\author[J. Levinson]{Jake Levinson}\email{jakelev@umich.edu}
\address{Department of Mathematics, University of Michigan, Ann Arbor, MI 48109}

\author[S. J. Miller]{Steven J. Miller}\email{sjm1@williams.edu, Steven.Miller.MC.96@aya.yale.edu}
\address{Department of Mathematics and Statistics, Williams College,
Williamstown, MA 01267}

\subjclass[2010]{11M26, 15B52 (primary), 11M50 (secondary).}

\keywords{random matrix theory, $L$-functions, low-lying zeros}

\date{\today}

\thanks{
The Mathematica files are available online at\\ \tiny \url{http://web.williams.edu/Mathematics/sjmiller/public_html/math/papers/jakel/FourierIdentity.tar}.\normalsize}

\begin{abstract} Previous work by Rubinstein \cite{Rub} and Gao \cite{Gao} computed the $n$-level densities for families of quadratic Dirichlet $L$-functions for test functions  $\widehat{f}_1, \dots, \widehat{f}_n$ supported in $\sum_{i=1}^n |u_i| < 2$, and showed agreement with random matrix theory predictions in this range for $n \le 3$ but only in a restricted range for larger $n$. We extend these results and show agreement for $n \le 7$, and reduce higher $n$ to a Fourier transform identity. The proof involves adopting a new combinatorial perspective to convert all terms to a canonical form, which facilitates the comparison of the two sides.
\end{abstract}

\maketitle

\tableofcontents

\section{Introduction}

Assuming the Generalized Riemann Hypothesis (GRH), the non-trivial zeros of $L$-functions lie on the line $\Re{s} = 1/2$. The distribution of these zeros govern the behavior of a variety of problems, ranging from the distribution of primes in arithmetic progressions to the size of the class number to the geometric rank of the Mordell-Weil group of elliptic curves, among others \cite{CI,Da,Go,GZ,RubSa}. In many instances we need to know more than just the fact that the zeros lie on the line, but additionally how they are distributed on the line.

One of the most successful approaches to modeling these zeros is through Random Matrix Theory. Originally arising in statistical investigations \cite{Wis}, the subject flourished in the 1950s and 1960s with the work of Wigner \cite{Wig1, Wig2, Wig3, Wig4, Wig5}, Dyson \cite{Dy1, Dy2} and others, who applied it to describe the energy levels of heavy nuclei. In the 1970s, Montgomery and Dyson \cite{Mon} noticed that the 2-level correlation of zeros of the Riemann zeta function matched those of the Gaussian Unitary Ensemble (GUE); see \cite{Ha,FirMi} for more on the history. Since then Random Matrix Theory has made precise statements about the main term in the behavior of numerous statistics involving zeros of $L$-functions \cite{Con,KeSn1,KeSn2,KeSn3}.

While the limiting behavior of $n$-level correlations of a single $L$-function have been shown to agree (for suitable test functions) with the scaling limit of the GUE \cite{Hej,Mon,RS}, the behavior near the central point is different for different $L$-functions, and depends on the arithmetic of the form (for example, the order of vanishing of $L$-functions attached to elliptic curves is conjecturally equal to the rank of the Mordell-Weil group). To study these low-lying zeros, Katz and Sarnak \cite{KaSa1,KaSa2} introduced a different statistic, the $n$-level density, defined as follows. Assuming GRH, the non-trivial zeros of an $L$-function $L(s,g)$ are $1/2 + i\gamma_g^{(j)}$ with $\gamma_g^{(j)}$ real, where $\cdots \le \gamma_g^{(-2)} \le \gamma_g^{(-1)} \leq\gamma_g^{(1)}\leq\gamma_g^{(2)} \leq\cdots$ if the sign of the functional equation is even (if it is odd, there is an extra zero: $\gamma_g^{(0)} = 0$). The $n$-level density for a finite family of $L$-functions $\mathcal{G}$ is
\begin{eqnarray}
D^{(n)}(\mathcal{G};f) \ := \ \frac{1}{|\mathcal{G}|} \sum_{g\in \mathcal{G}} \sum_{\substack{j_1,\dots, j_n \\ j_i \neq \pm j_k}} f_1\left(\frac{\log R}{2\pi}\gamma_g^{(j_1)}\right)\cdots f_n\left(\frac{\log R}{2\pi}\gamma_g^{(j_n)}\right),
\end{eqnarray} where the $f_i$ are even Schwartz functions whose Fourier transforms  have compact support and $\log R$ is a normalization parameter (essentially the average of the logarithms of the analytic conductors) so that the scaled zeros near the central point have mean spacing 1. The Katz-Sarnak Density Conjecture states that as the conductors tend to infinity the distribution of the scaled zeros near the central point converges to the same limiting distribution as the normalized eigenvalues near 1 of a subgroup of the unitary group $U(N)$ as $N\to\infty$. The corresponding group is typically unitary, symplectic, or orthogonal matrices (or a trivial modification to take into account forced zeros at the central point). There is strong evidence for this conjecture. First, in the function field case the correspondence is clear as the subgroup is the monodromy group. Second, there are now many families of $L$-functions where we can prove agreement for suitably restricted test functions, including Dirichlet $L$-functions, elliptic curves, cuspidal newforms, Maass forms, number field $L$-functions and symmetric powers of ${\rm GL}_2$ automorphic representations, to name a few \cite{AILMZ, AM, DM1, FioMi, FI, Gao, GK, Gu, HM, HR, ILS, KaSa1, KaSa2, Mil1, MilPe, OS1, OS2, RR, Ro, Rub, Ya, Yo}. More generally, in recent work Shin and Templier \cite{ShTe} determined the symmetry type of many families of automorphic forms on ${\rm GL}_n$ over $\mathbb{Q}$, and Due\~nez and Miller \cite{DM2} showed how to understand the low-lying zeros of many compound families of $L$-functions (arising from Rankin-Selberg convolutions) in terms of the behavior of the constituent families.

In this paper we study the low-lying zeros (i.e., those near the central point) of quadratic Dirichlet $L$-functions via the $n$-level density. In his thesis Rubinstein \cite{Rub} showed these agree with the scaling limit of symplectic matrices whenever $\widehat{f}_1, \dots, \widehat{f}_n$ are supported in $\sum_{i=1}^n |u_i| < 1$. Gao \cite{Gao} extended this result in his thesis. It is important to have as large support as possible, as frequently extending the support is related to finer questions about the arithmetic of the family. Interestingly, while Gao was able to compute the number theory side for test functions supported in $\sum_{i=1}^n |u_i| < 2$, he was only able to show agreement with the Katz-Sarnak determinantal expansion for the symplectic ensemble for $n \le 3$.

This created an annoying situation in the literature, where both number theory and random matrix theory had been computed in the regime $\sum_{i=1}^n |u_i| < 2$, but could only be shown to agree in this full range for $n \le 3$. Gao's proof involved using ad hoc Fourier transform identities to match the manageable number of terms present for such small $n$. Unfortunately, the number of summands grows very rapidly with $n$, and this approach becomes impractical for higher $n$.

In this paper, we further extend the agreement between number theory and random matrix theory. Our proof is in two steps. First, we resolve a combinatorial obstruction by rewriting both densities using the same combinatorial perspective: we express the terms of the densities in terms of certain pairs of set partitions. This allows us to show agreement between most of the terms arising in the densities, for any $n$. Second, we reduce the Density Conjecture (in the range $\sum_{i=1}^n |u_i| < 2$) to showing that a term arising in the random matrix theory is the Mobius transform (over the lattice of set partitions) of a corresponding term from number theory. We cannot prove this identity for all $n$, but we use Mobius inversion and properties of Fourier transforms to give it a canonical form that is possible to check with a computation. As an application, we verify it for $n \leq 7$: \\

\begin{thm}\label{thm:mainnlevelagree} Let $f_1, \ldots, f_n$ be even Schwartz functions with the $\widehat{f_i}$ supported in $\sum_{i=1}^n |u_i|$ $<$ $2$. For $n \leq 7$, the Katz-Sarnak Density Conjecture holds for the low-lying zeros of quadratic Dirichlet $L$-functions $\{L(s,\chi_{8d})\}$, where $d \in \N$ is odd and square-free.
\end{thm}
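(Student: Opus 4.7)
The plan is to prove Theorem~\ref{thm:mainnlevelagree} by expanding both sides of the desired equality in a common combinatorial framework -- pairs of set partitions of $\{1,\dots,n\}$ -- so they can be matched block-by-block.

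First, I would start from the explicit-formula expansion of $D^{(n)}(\mathcal{F};\phi)$. Following Gao, after expanding each $\phi_i$ via its Fourier transform, invoking the explicit formula for $L(s,\chi_{8d})$, and averaging over odd square-free $d$, the main contribution reduces to an $n$-fold sum over primes controlled by the support hypothesis $\sum|u_i|<2$. The terms naturally organize by how the variables $p_1,\dots,p_n$ collapse under the averaging: a first set partition $\sigma\in\Pi_n$ records equalities forced by the identity $\chi_d(p)^2=1$, and a second partition $\tau$ records which blocks are handled on the diagonal versus via Poisson summation, yielding
\[
D^{(n)}(\mathcal{F};\phi) \ = \ \sum_{(\sigma,\tau)} N_{\sigma,\tau}(\phi),
\]
where each $N_{\sigma,\tau}$ is an explicit integral of the $\widehat{\phi_i}$ against an elementary kernel.

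Next, I would expand the Katz--Sarnak symplectic density using its determinantal kernel, write the determinant as a signed sum over permutations of $\{1,\dots,n\}$, and decompose each permutation by cycle type. Using the evenness of the $\phi_i$ and standard Fourier manipulations (reflection and Plancherel), the cycles of length $\ge 2$ collapse to lower-dimensional integrals, producing a parallel expansion
\[
{\rm RMT}^{(n)}(\phi) \ = \ \sum_{(\sigma,\tau)} R_{\sigma,\tau}(\phi).
\]
The first substantive step is to verify that $N_{\sigma,\tau}=R_{\sigma,\tau}$ on the nose for all $(\sigma,\tau)$ except those supported on a distinguished ``diagonal'' family of blocks where evenness and the support condition alone are insufficient. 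These routine matchings should reduce to standard Fourier identities, independently of $n$.

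The remaining equality, concentrated on the exceptional blocks, has the shape of a Möbius transform identity over the partition lattice $\Pi_n$: one side is the Möbius transform of the other, and verifying it at the finest partition gives it at all coarser partitions by inversion. After Möbius-inverting and exploiting compact support, I would reduce the statement to a single canonical Fourier-transform identity in $n$ variables, phrased inside the coordinate ring $C(n)$ mentioned in the acknowledgments. Because $C(n)$ is reduced, the identity may be checked independently at each prime ideal of $\mathrm{Spec}\,C(n)$, converting the problem into a finite symbolic computation for each fixed $n$. The main obstacle is that we have no uniform proof of the canonical identity valid for all $n$; our contribution is to execute the required computation in Mathematica for $n=4,5,6,7$, which, together with Gao's verification for $n\le 3$, establishes Theorem~\ref{thm:mainnlevelagree}.
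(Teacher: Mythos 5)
Your high-level outline matches the paper's architecture: recast both densities combinatorially over pairs of set partitions, match all but a distinguished family of terms, reduce the residue to a M\"obius-transform identity on $\Pi(n)$, and verify that identity symbolically for $n\le 7$. But several of the load-bearing details are either misidentified or omitted, and together they amount to a real gap rather than a cosmetic one.

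First, the paper does not re-derive either side from scratch. It takes Gao's formula for the number-theory density and Gao's formula for the USp determinantal density as given, and the entire content is a combinatorial manipulation of those two known expansions. Your plan to ``expand the determinantal kernel as a signed sum over permutations and decompose by cycle type'' describes work that Gao already did; redoing it contributes nothing and risks obscuring the one place where new work is needed.

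Second, and more seriously, your pair $(\sigma,\tau)$ does not capture the structure that makes the matching possible. You describe $\tau$ as ``recording which blocks are handled on the diagonal versus via Poisson,'' i.e.\ a marked subset of blocks. The paper's pairs are instead $(\ul{F},\ul{G})$ with $\ul{F}\preceq\ul{G}$ a \emph{2-refinement}: every block of $\ul{G}$ is a union of at most two blocks of $\ul{F}$. This is exactly the combinatorial object that appears on both sides (from splitting a block into $H\cup H^c$ on the RMT side, and from pairing up blocks on the NT side), and the central move of the paper is to reverse the NT sum --- which naturally runs over partitions 2-\emph{coarser} than a fixed $\ul{F}$ --- into a sum over 2-\emph{refinements} of a fixed $\ul{G}$. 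Without this reindexing (Lemmas~\ref{lem:cvr_to_2ref}, \ref{lem:jake_cumulant}, \ref{lem:nt_with_2refs}), the two expansions do not line up term-by-term, so the matching you call ``routine'' is in fact the principal work of the paper, not a preliminary. You also omit the support-vanishing lemma (Lemma~\ref{lem:nosupport}), which shows $C_l\cdot C_k=0$ for disjoint blocks; without it the RMT side has $2^k$ cross-terms per partition and the isolation of the exceptional $C$-terms never happens.

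Finally, you conflate the formal ``check at each prime of $\mathrm{Spec}\,C(n)$'' viewpoint with the actual verification. The paper mentions the reduced-ring/prime-ideal reformulation only as a concluding remark; the computation that proves Theorem~\ref{thm:mainnlevelagree} goes through the canonical chain form of Proposition~\ref{lem:intermediate_fourierone} and the simplification rules $\tilde\chi_A\tilde\chi_B=\tilde\chi_A$ for $A\subset B$ and $\tilde\chi_{A_1}\cdots\tilde\chi_{A_k}=0$ when every index appears in at most $\tfrac34 k$ of the $A_j$ (Lemma~\ref{lem:simplifications}), applied symbolically. Your proposal would benefit from replacing the vague derivation-from-explicit-formula paragraphs with the precise 2-refinement indexing and the vanishing lemma, since those are what turn ``two differently organized sums'' into ``two sums that cancel except for a single M\"obius identity.''
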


In the above theorem, following Gao \cite{Gao} we restricted the family of quadratic characters. This simplifies the analysis by excluding $\chi_2$, and facilitates applications of Poisson summation in Gao's thesis \cite{Gao}. Note that $\chi_{8d}$ is a real primitive character with even sign (i.e., $\chi_{8d}(-1) = 1$).

We briefly sketch the proof. Both sides are known for $\sum_{i=1}^n |u_i| < 2$ by \cite{Rub,Gao}; the difficulty is showing that the two expressions are equal. We proceed as follows.

\begin{enumerate}

\item We regroup the terms in the random matrix theory in terms of pairs of set partitions $\ul{F}, \ul{G}$, such that $\ul{F}$ refines $\ul{G}$ and each block of $\ul{G}$ is a union of at most two blocks of $\ul{F}$.
\item We do the same to the number theory; this step is more involved because the counting is naturally `backwards' there, so the main step is to switch the order in which the pairs of partitions are counted.
\item We separate the remaining non-matching terms from the rest of the sum, and show that they are all instances of a single Mobius inversion identity. 
\item We (Mobius-)invert the identity and use properties of Fourier transforms to convert all the terms to integrals over $\R^n_{\geq 0}$. We reduce to showing that the integrands are identically equal in the region $u_i > 0$, $\sum_{i=1}^n u_i < 2$.
\item We reduce to a formal polynomial over the subsets of $\{1, \ldots, n\}$, modulo two relations that encode the support restriction: this gives an algorithm for showing the Fourier identity, which we use to verify up to $n = 7$.
\end{enumerate}

\begin{rek} This work is an extension of the first-named author's 2011 senior thesis at Williams College \cite{Lev}. There agreement was shown for $n \le 6$ through a more computational approach. In the course of extending these results and preparing this manuscript, we learned of the work of Entin, Roddity-Gershon and Rudnick \cite{ER-GR}, who are able to show agreement for all $n$. Instead of taking a combinatorial approach, they proceed by going through a function field analogue and using the limit of large finite fields where the hyperelliptic ensemble is shown to have USp statistics. In particular, their results imply that our identity holds for all $n$; it would be interesting to complete the ideas of this paper and derive a purely combinatorial proof of this fact.
\end{rek}

The paper is organized as follows. We assume the reader is familiar with \cite{Rub,Gao}, and we will just quote the number theory and random matrix expansions from these works. In \S\ref{sec:combpreliminaries} we review some notation and derive some combinatorial results which allow us to recast our problem as a related Fourier transform identity. We briefly discuss the obstruction which restricts our theorem to $n \le 7$, and see why the two sides at first look so different. We continue in the next section by recasting the random matrix and number theory expansions to a more amenable form, reducing the problem to the aforementioned Fourier transform identity, which we analyze in \S\ref{sec:fourier_identity_n6}. There we rewrite everything in a more tractable canonical form, and discuss the verification for $n \le 7$, which completes the proof of Theorem \ref{thm:mainnlevelagree}.

\section{Combinatorial preliminaries}\label{sec:combpreliminaries}

The purpose of this section is to set the notation for the subsequent combinatorial analysis, and highlight the technical issues.

\subsection{Set partitions}

We recall some basic properties of set partitions. A {\bf partition} $\ul{F}$ of a finite set $S$ is a collection of subsets $\ul{F} = \{F_1, \ldots, F_k\} \subset \ml{P}(S)$, such that the $F_i$ are nonempty and pairwise disjoint, and $S = \bigcup_{i=1}^k F_i$. The $F_i$ are called the {\bf blocks} of $\ul{F}$ and the number  $k = \nu(\ul{F})$ of blocks is the {\bf length} of $\ul{F}$.
The set of all partitions of a set $S$ is denoted $\Pi(S)$;
when $n \in \N$, by abuse of notation we write $\Pi(n)$ for $\Pi(\{1,\ldots,n\})$.

We partially-order $\Pi(S)$ by partition refinement: $\ul{F} \preceq \ul{G}$ if each block of $\ul{F}$ is contained in some block of $\ul{G}$ (equivalently, each block of $\ul{G}$ is a union of blocks of $\ul{F}$). We write $\ul{O} = \{\{1\}, \ldots, \{n\}\}$ and $\ul{N} = \{\{1, \ldots, n\}\}$ for the minimal and maximal partitions.


We associate to any partially ordered set $P$ the incidence algebra
\be\ml{A}\ =\ \{f : P \times P \to \C \mid f(x,y) = 0 \text{ unless } x \preceq y\},\ee
with pointwise addition and multiplication defined by the convolution $*$:
\be (f \ast g)(x,y)\ =\ \sum_{z \in [x,y]} f(x,z)g(z,y),\ee
where \be [x,y] \ := \ \{z : x \preceq z \preceq y\} \ee is the segment from $x$ to $y$. The multiplicative identity is denoted by $\delta$, where \be \twocase{\delta(x,y) \ = \ }{1}{if $x = y$}{0}{otherwise.} \ee We have the zeta function \be \twocase{\zeta(x,y)\ = \ }{1}{if $x \preceq y$}{0}{otherwise.}\ee We think of multiplication by $\zeta$ as `integration', since
\begin{align}
(\zeta \ast f)(x,y) 
& \ = \  \sum_{z \in [x,y]} f(z,y).
\end{align}
The convolution inverse of $\zeta$ is the Mobius function $\mu$, which satisfies the identity
\be \delta(x,y)\ =\ (\mu \ast \zeta)(x,y)\ =\ \sum_{z \in [x,y]} \mu(x,z)\ =\ \begin{cases} 1 & \text{if}\ x = y \\ 0 & \text{otherwise.}\end{cases}\ee

We will use Mobius inversion on functions from $P$ to $\C$.
The incidence algebra acts on functions (on the left) as follows. For $f \in \ml{A}$ and $g : P \to \C$, we define
\be (f \ast g)(x)\ =\ \sum_{x \preceq y} f(x,y) g(y),\ee
and the Mobius inversion formula is given by
\be
f \ =\ (\zeta \ast g) \qquad \ \ \Leftrightarrow\ \ \qquad g\ =\ (\mu \ast f); \ee or, more explicitly,
\be (\forall x)\ f(x)\ =\ \sum_{x \preceq y} g(y) \ \ \Leftrightarrow\ \ (\forall x)\ g(x)\ =\ \sum_{x \preceq y} \mu(x,y)f(y).
\ee

The Mobius function of $\Pi(n)$ is known (see for example \cite{Rot}): if $\ul{F} \preceq \ul{G}$ and the $i$\textsuperscript{th} block of $\ul{G}$ is a union of $b_i$ blocks of $\ul{F}$, then
\be
\mu(\ul{F},\ul{G})\ =\ (-1)^{\nu(\ul{F}) - \nu(\ul{G})} \prod_{i=1}^{\nu(\ul{G})} (b_i - 1)!.
\ee
The coefficients $\mu(\ul{O},\ul{F})$ and $\mu(\ul{F},\ul{N})$ will often show up in our sums and are given by
\begin{align}
\mu(\ul{O},\ul{F}) &\ =\ (-1)^{n-\nu(\ul{F})} \prod_{i=1}^{\nu(\ul{F})}(|F_i|-1)!, \\
\mu(\ul{F},\ul{N}) &\ =\ (-1)^{\nu(\ul{F})-1} (\nu(\ul{F})-1)!.
\end{align}
%




We make extensive use of the following definition.
\begin{definition}If $\ul{F} \preceq \ul{G} \in \Pi(n)$ are partitions, we say $\ul{F}$ is a \emph{2-refinement} of $\ul{G}$ (or $\ul{G}$ is \emph{2-coarser} than $\ul{F}$) if each block of $\ul{G}$ is a union of at most 2 blocks of $\ul{F}$. If only one block decomposes, we say $\ul{G}$ \emph{covers} $\ul{F}$.
\end{definition}

Covers and 2-refinements arise in our sums, and we note that in these cases the Mobius function simplifies to $\mu(\ul{F},\ul{G}) = (-1)^{\nu(\ul{F}) - \nu(\ul{G})}$. Also, if $\ul{G}$ covers $\ul{F}$ via the decomposition $F_i \cup F_j = G_k$, it's easy to see that
\be \label{cvr_mobius}
\frac{\mu(\ul{O},\ul{F})}{\mu(\ul{O},\ul{G})}\ =\ -\frac{(|F_i| - 1)! (|F_j|-1)!}{(|G_k| - 1)!}.
\ee
More generally, for a 2-refinement $\ul{F} \preceq \ul{G}$, let $\ul{F^l}\in [\ul{F}, \ul{G}]$ be the partition obtained by only decomposing the $l$\textsuperscript{th} block of $G$ into blocks from $\ul{F}$, say $G_l = F_{l_1} \cup F_{l_2}$. Then
\be \label{2refmobius}
\frac{\mu(\ul{O},\ul{F})}{\mu(\ul{O},\ul{G})}\ =\ \prod_{l} \frac{\mu(\ul{O},\ul{F^l})}{\mu(\ul{O},\ul{G})}\ =\ (-1)^{\nu(\ul{F})-\nu(\ul{G})} \prod_l \frac{(|F_{l_1}| - 1)! (|F_{l_2}|-1)!}{(|G_l| - 1)!},
\ee
where $l$ runs over the blocks $G_l$ that decompose in $\ul{F}$.

\begin{definition} \label{s,sc,w,wc} If $\ul{F}$ is a 2-refinement of $\ul{G}$, we define the sets
\begin{align}
S(\ul{F},\ul{G}) &\ = \  \{l : G_l \text{ decomposes in } \ul{F}\}, \nonumber\\
S^c(\ul{F},\ul{G}) &\ = \  \{l : G_l \text{ is a block of } \ul{F}\}, \nonumber\\
W(\ul{F},\ul{G}) &\ = \  \{l : F_l \text{ joins with another block of $\ul{F}$ in } \ul{G}\}, \nonumber\\
W^c(\ul{F},\ul{G}) &\ = \  \{l : F_l \text{ remains a block in } \ul{G}\},
\end{align}
so $S \cup S^c = \{1, \ldots, \nu(\ul{G})\}$ and $W \cup W^c = \{1, \ldots, \nu(\ul{F})\}$.
\end{definition}
\begin{rmk} \label{rmk:pairs_and_2refs}
Given $\ul{G} \in \Pi(n)$, a 2-refinement $\ul{F}$ is uniquely specified by a choice of blocks $S \subseteq \{1, \ldots, \nu(\ul{G})\}$, and, for each $l \in S$, a choice of decomposition $G_l = H_l \cup H^c_l$. (If $|G_l| = 1$ for some $l \in S$, there are no valid decompositions of $G_l$.)

Conversely, given $\ul{F}$, a partition $\ul{G}$ 2-coarser than $\ul{F}$ is uniquely specified by a choice of blocks $W \subseteq \{1, \ldots, \nu(\ul{F})\}$ with $|W|$ even, and a way of pairing up the elements of $W$.
\end{rmk}

\subsection{The combinatorial obstruction} We can now clarify some of the obstacles we need to address.


The first reason the random matrix theory and number theory densities in \cite{Gao} appear different is as follows. In the random matrix theory density, for each partition $\ul{F} = \{F_1, \ldots, F_k\}$ we at one point consider all ways of decomposing some or all of the blocks $F_i$ into exactly two proper nonempty subsets each. That is, we consider all the 2-refinements $\ul{F'}$ of $\ul{F}$. On the number theory side, we instead consider all the ways of pairing up (some or all of) the blocks $F_i$. In other words, we consider all the partitions $\ul{G}$ of which $\ul{F}$ is a 2-refinement. Because the counting is `backwards' here, the terms appear very different from those encountered on the other side. By reindexing these sums appropriately, we are able to match up the parts of the random matrix and number theory densities related to 2-refinements. We then reduce the remaining difference to a Fourier transform identity.

We verify this remaining Fourier transform identity up to the case $n=7$ by breaking down the remaining combinatorics. The difference between our approach and Gao's is as follows. Gao verified the cases $n = 1, 2, 3$ by using various ad hoc Fourier Transform identities, and explicitly computing formulas for (sums of) integrals over certain regions in $\R^n$ ($n \leq 3$), such as (equation 5.11 from \cite{Gao}) :
\begin{align}
&\int_{\substack{ \R^3_{\geq 0} \\ u_1 > 1 + u_2 + u_3}} \prod_{i=1}^3 \hat{f}_i(u_i)du_i\ =\ \int_1^{\infty} \int_0^{u_1-1} \int_0^{u_1-u_2-1} \prod_{i=1}^3 \hat{f}_i(u_i)du_i, \nonumber\\
&\int_{\substack{ \R^3_{\geq 0} \\ u_2 > 1 + u_1 + u_3}} \prod_{i=1}^3 \hat{f}_i(u_i)du_i\ =\ \int_0^{\infty} \int_{1+u_1}^{\infty} \int_0^{u_2-u_1-1} \prod_{i=1}^3 \hat{f}_i(u_i)du_i,
\end{align}
and showed that these sums yielded zero over various sub-regions of the support region $|u_1| + |u_2| + |u_3| < 2$. In contrast, we will write
\begin{align}
\int_{\substack{ \R^3_{\geq 0} \\ u_1 > 1 + u_2 + u_3 }} \prod_{i=1}^3 \hat{f}_i(u_i)du_i &\ = \  \int_{\R^3_{\geq 0}} \tilde{\chi}(u_1 - u_2 - u_3) \prod_{i=1}^3 \hat{f}_i(u_i)du_i, \nonumber\\
\int_{\substack{ \R^3_{\geq 0} \\ u_1 > 1 + u_2 + u_3 }} \prod_{i=1}^3 \hat{f}_i(u_i)du_i &\ = \  \int_{\R^3_{\geq 0}} \tilde{\chi}(- u_1 + u_2 - u_3) \prod_{i=1}^3 \hat{f}_i(u_i)du_i,
\end{align}
where $\tilde{\chi}$ is the indicator function of the interval $[1, \infty)$, and show equality by analyzing the combinatorics of various sums of products of indicator functions.

\section{Recasting the expansions}\label{sec:fixingcounting}

In this section we rewrite both sides to facilitate the comparison, and reduce the problem to a Fourier transform identity. To state the random matrix theory expansion we need the following definition.

\begin{defi}[$\chi^*$]\label{appendix:chistar}
For an integer $k \geq 1$, the sum of indicator functions $\chi^*_k$ on $\R^k$ is defined by
\begin{align} \label{chistar}
\chi^*_k(u_1, \ldots, u_k) &\ = \  \sum_{\substack{ \pi \in S_k \\ \pi(1)=1}} \left(\prod_{i=1}^k \chi(u_{\pi(1)} + \cdots + u_{\pi(i)} - u_{\pi(i+1)} - \cdots - u_{\pi(k)}) \right),
\end{align}
where $S_k$ is the symmetric group on $\{1,\ldots,k\}$, we sum over the $(k-1)!$ permutations fixing 1, and $\chi$ is the indicator function of $[-1,1]$.
\end{defi}
Note that $\chi_n^*$ is symmetric in the variables $u_2, \ldots, u_n$ but not $u_1$. We have the following, however:
\begin{prop}
Fix $m \in \{1, \ldots, n\}$. Let $\phi_{m;n}$ be defined the same way as in \eqref{chistar}, but with the condition ``$\pi(1) = 1$'' replaced by ``$\pi(1)=m$'' (and $k=n$), so
\begin{align} \label{chistar}
\phi_{m;n}(u_1, \ldots, u_n) &\ = \  \sum_{\substack{ \pi \in S_n \\ \pi(1)=m}} \left(\prod_{i=1}^n \chi(u_{\pi(1)} + \cdots + u_{\pi(i)} - u_{\pi(i+1)} - \cdots - u_{\pi(n)}) \right),
\end{align}
Let $f_1, \ldots, f_n$ be even Schwartz functions. Then
\begin{equation} \label{chistar-symmetry}
\int_{\R^n} \phi_{m;n}(u_1, \ldots, u_n) \prod_{i=1}^n f_i(u_i) du_i = \int_{\R^n} \chi^*_n(u_1, \ldots, u_n) \prod_{i=1}^n f_i(u_i) du_i.
\end{equation}
\end{prop}
All our integrals will be against even functions $f_1, \ldots, f_n$, so by abuse of notation, we will sometimes refer to $\chi^*_G$, where $G$ is a set (generally a block of a partition $\ul{G} \in \Pi(n)$). The definition is the same as above (with $k=|G|$) and $G$ is understood the set of indices for the variables $u_i$; any $i \in G$ can play the role of the `distinguished' element 1.
\begin{proof}
We consider the summands of $\chi^*_n$ one at a time. Let $\pi$ be a permutation with $\pi(1)=1$ and let $k$ be such that $\pi(k)=m$. We show that the $\pi$ term gives the same integral as the term in $\phi_{m;n}$ coming from the permutation $\pi'$, where
\begin{equation}
\pi' = \begin{pmatrix} 1 & 2 & \cdots & n-k+1 & n-k & \cdots &n  \\ \pi(k) & \pi(k+1) & \cdots & \pi(n) & \pi(1) & \cdots & \pi(k-1) \end{pmatrix}.
\end{equation}
In particular, $\pi'(1) = \pi(k) = m$ and thus $\pi'$ is one of the terms in $\phi_{m;n}$.

To see this, first replace $u_{\pi(i)} \mapsto -u_{\pi(i)}$ for $i=k, \ldots, n$ in each $\chi$ factor. This doesn't change the value of the integral because the $f_i$ are all even. Now $u_{\pi(k)}$ appears with a negative sign in the $k$\textsuperscript{th} through $n$\textsuperscript{th} factors. Since $\chi$ is an even function, multiply its argument by -1 on each of these factors, so that $u_{\pi(k)}$ now always appears with a positive sign. Reordering the factors cyclically, so that the $k$\textsuperscript{th} term appears first (followed by the $(k+1)$\textsuperscript{st}, $\ldots, n$\textsuperscript{th}, $1$\textsuperscript{st}, $\dots$, $(k-1)$\textsuperscript{th}), gives the desired expression.

This process is invertible, so the terms of $\phi_{m;n}$ are in one-to-one correspondence with the terms of $\chi^*_n$ (and this correspondence preserves the values of the integrals).
\end{proof}

\subsection{Recasting the random matrix side} The $n$-level eigenvalue density for USp (see equation (4.12) in \cite{Gao}) is
\begin{align} \label{gaormt}
\int_{\R^n} \prod_{i=1}^n f_i(x) W^{(n)}_{USp}(x)dx\ =\ \sum_{\underline{G} \in \Pi(n)} (-2)^{n - \nu(\underline{G})} \prod_{l=1}^{\nu(\underline{G})} (P_l + Q_l + R_l),
\end{align}
where
\begin{align}
P_l &\ = \  (|G_l| - 1)! \bigg( (\frac{-1}{2}) \int_{\R} \hat{G}_l(u)du + \int_{\R} G_l(x) dx\bigg), \\
\label{Ql_term}
Q_l &\ = \  - \sum_{[H , H^c]} (|H| - 1)! (|H^c| - 1)! \int_{\R} |u| \widehat{H}(u) \widehat{H^c}(u) du, \\
R_l &\ = \  \frac{1}{2} \int_{\R^{|G_l|}} \bigg((|G_l|-1)!  - \chi^*_{G_l}(u_{i_1},\ldots,u_{i_{|G_l|}}) \bigg) \prod_{i \in G_l} \hat{f}_i(u_i)du_i,
\end{align}
with $\underline{G} = \{G_1, \ldots, G_{\nu(\underline{G})}\}$ and $G_l(x) = \prod_{i \in G_l} f_i(x).$ Also, the sum $\sum_{[H,H^c]}$ ranges over the ways of decomposing $G_l$ into two proper, nonempty disjoint subsets $H$ and $H^c$, and $\displaystyle{\widehat{H}(u) = \widehat{\prod_{i \in H} f_i}(u)}$ and similarly for $H^c$. Except for Lemma \ref{lem:nosupport}, we do not need the expansion of $\chi^*_{G_l}$ until \S\ref{sec:fourier_identity_n6}.

In this section we alter this expression in two ways. First, we rearrange the formula so that the $Q_l$ terms (involving decompositions of the blocks of $\ul{G}$) are put in a form described by 2-refinements of $\ul{G}$. When we work with the number theory side, we perform a similar rearrangement that makes it easy to see the correspondence between these terms. The second improvement is to reduce the number of $R_l$ terms we must analyze by showing that any product of two $R_\ell$ terms vanishes due to support restrictions.

\subsubsection{Reindexing the RMT side} We first recast the above formula in terms of 2-refinements of $\ul{G}$.

\begin{lemma} \label{lem:rmt_with_2refs}
Equation $\eqref{gaormt}$ is equivalent to
\begin{align} \label{rmt_with_2refs}
\int_{\R^n} \prod_{i=1}^n f_i(x) W^{(n)}_{USp}(x)dx\ =\
\sum_{\underline{G} \in \Pi(n)} \sum_{\ul{F} \preceq \ul{G}}^{\op{2ref}} 2^{n - \nu(\underline{G})} \mu(\ul{O},\ul{F}) D(\ul{F},\ul{G})
\prod_{l \in S^c} (A_l + C_l),
\end{align}
where $\sum_{\ul{F}\preceq\ul{G}}^{\op{2ref}}$ runs over all the 2-refinements of $\ul{G}$ (including $\ul{G}$ itself) and
\begin{align}
\label{dfg}
D(\ul{F},\ul{G}) &\ = \  \prod_{l \in S(\ul{F},\ul{G})} \int_\R |u|\widehat{H_l}(u)\widehat{H^c_l}(u)du, \\
\label{A_lone}
A_l &\ = \  -\frac{1}{2} \int_{\R} \widehat{G_l}(u)du + \int_{\R} G_l(x) dx, \\
\label{C_lone}
C_l &\ = \  \frac{1}{2} \int_{\R^{|G_l|}} \bigg(1  - \frac{\chi^*_{G_l}(u_{i_1},\ldots,u_{i_{|G_l|}})}{(|G_l|-1)!}  \bigg) \prod_{i \in G_l} \hat{f}_i(u_i)du_i,
\end{align}
and $H_l \cup H_l^c = G_l$ is the decomposition of $G_l$ in $\ul{F}$, with $\displaystyle{\widehat{H_l}(u) = \widehat{\prod_{i \in H_l} f_i}(u)}$ and similarly for $\widehat{H^c_l}(u)$ (note empty products are 1). The sets $S = S(\ul{F},\ul{G})$ and $S^c = S^c(\ul{F},\ul{G})$ are as in Definition \ref{s,sc,w,wc}.
\end{lemma}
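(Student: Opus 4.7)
The plan is to expand the product $\prod_{l=1}^{\nu(\ul{G})}(P_l + Q_l + R_l)$ in \eqref{gaormt} and reorganize the resulting sum by recording, for each block $G_l$, whether the $Q_l$ term was chosen. The indices $l$ where $Q_l$ is chosen will form the set $S(\ul{F},\ul{G})$, the remaining indices forming $S^c(\ul{F},\ul{G})$. Expanding $Q_l$ further via \eqref{Ql_term} as a sum over decompositions $G_l = H_l \sqcup H_l^c$ specifies, by Remark \ref{rmk:pairs_and_2refs}, a 2-refinement $\ul{F} \preceq \ul{G}$. In this way the double sum $\sum_{\ul{G}}\sum_{\ul{F} \preceq \ul{G}}^{\op{2ref}}$ in \eqref{rmt_with_2refs} replaces the single sum $\sum_{\ul{G}}$ together with the triple-term expansion, and the product of $|u|$-integrals obtained from the $Q_l$ contributions assembles into $D(\ul{F},\ul{G})$ as defined in \eqref{dfg}.

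Next, I would pull the factorial $(|G_l|-1)!$ out of $P_l$ and $R_l$ for each $l \in S^c$, converting them into $A_l + C_l$ as in \eqref{A_lone} and \eqref{C_lone}; each $l \in S$ contributes the factorials $(|H_l|-1)!(|H_l^c|-1)!$ together with a leading minus sign from the $-\sum_{[H,H^c]}$ in $Q_l$. Using the explicit formula for $\mu(\ul{O},\ul{F})$ recorded earlier in the excerpt, and noting that the blocks of $\ul{F}$ are $\{H_l, H_l^c\}_{l \in S} \cup \{G_l\}_{l \in S^c}$, one obtains
\begin{align*}
\mu(\ul{O},\ul{F}) \ = \ (-1)^{n-\nu(\ul{F})} \prod_{l \in S}(|H_l|-1)!(|H_l^c|-1)! \prod_{l \in S^c}(|G_l|-1)!,
\end{align*}
so the factorial content accumulated in the expansion is exactly $\pm\, \mu(\ul{O},\ul{F})$.

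It remains to verify the sign. Combining the prefactor $(-2)^{n-\nu(\ul{G})}$, the $(-1)^{|S|}$ from the minus signs in the $Q_l$'s, and the $(-1)^{n-\nu(\ul{F})}$ extracted above, the total sign is $(-1)^{n-\nu(\ul{G})+|S|+n-\nu(\ul{F})}$. Since $\nu(\ul{F})-\nu(\ul{G}) = |S|$ (each split block of $\ul{G}$ contributes one new block to $\ul{F}$), the exponent is even, so the combined constant reduces to $2^{n-\nu(\ul{G})}$, yielding \eqref{rmt_with_2refs}. The argument is pure bookkeeping and I anticipate no genuine obstacle beyond carefully tracking these signs and factorials; the lemma is really a reindexing of Gao's expansion by the pairs $(\ul{G},\ul{F})$ used throughout the rest of the paper.
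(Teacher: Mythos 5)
Your proposal is correct and follows essentially the same route as the paper: both expand $\prod_l(P_l+Q_l+R_l)$, record which blocks take the $Q_l$ factor (the set $S$), expand those $Q_l$'s over decompositions $G_l = H_l\sqcup H_l^c$ to produce the 2-refinement $\ul F$, and track factorials and signs to assemble $\mu(\ul O,\ul F)$. The only difference is organizational: the paper first rewrites $(-2)^{n-\nu(\ul G)}$ as $2^{n-\nu(\ul G)}\mu(\ul O,\ul G)$ and isolates the factorial bookkeeping in a separate Lemma (their Lemma \ref{lem:cvr_to_2ref}, using the Mobius-ratio identity \eqref{2refmobius}), whereas you carry out that computation directly in one pass; both are sound.
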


\begin{proof}
We view the sum $\sum_{[H,H^c]}$ in $\eqref{gaormt}$ as a sum $\sum^{\text{cvr},G_l}_{\ul{F} \prec \ul{G}}$ over all strictly finer partitions $\ul{F} \prec \ul{G}$ that are covered by $\ul{G}$ via a decomposition of $G_l$ into $H \cup H^c$. Note that if $G_l$ is a singleton set, then we take the empty sum to be 0. Also, we pull the $(|G_l| - 1)!$ and $(-1)^{n-\nu(\ul{G})}$ factors to the front, to make a $\mu(\ul{O},\ul{F})$ coefficient. From  \eqref{cvr_mobius} we have
\be
-\frac{(|H| - 1)! (|H^c| - 1)!}{(|F_l|-1)!}\ =\ \frac{\mu(\ul{O},\ul{F})}{\mu(\ul{O},\ul{G})}.
\ee

The new RMT formula is then
\begin{align}
\int_{\R^n} \prod_{i=1}^n f_i(x) W^{(n)}_{USp}(x)dx\ =\ \sum_{\underline{G} \in \Pi(n)} 2^{n - \nu(\underline{G})} \mu(\ul{O},\ul{G}) \prod_{l=1}^{\nu(\underline{G})} (A_l + K_l + C_l),
\end{align}
where $A_l$ and $C_l$ are as in equations \eqref{A_lone} and \eqref{C_lone}, and
\begin{align}
K_l &\ = \  \sum^{\text{cvr},G_l}_{\ul{F} \prec \ul{G}} \frac{\mu(\ul{O},\ul{F})}{\mu(\ul{O},\ul{G})} \int_{\R} |u| \widehat{H}(u) \widehat{H^c}(u) du,
\label{kl_term}
\end{align}
where $\displaystyle{\widehat{H}(u) = \widehat{\prod_{f_i \in H}f_i}(u)}$, and similarly for $\widehat{H^c}(u)$.

Now, we begin expanding the product $\prod(A_l + K_l + C_l)$ to work directly with the $K_l$ term. The goal is to re-express these terms as sums over 2-refinements of $\ul{G}$. We have
\begin{align}
\int_{\R^n} \prod_{i=1}^n f_i(x) W^{(n)}_{USp}(x)dx = \sum_{\underline{G} \in \Pi(n)} 2^{n - \nu(\underline{G})} \mu(\ul{O},\ul{G}) \bigg( \sum_{S \subseteq \{1, \ldots, \nu(\ul{G})\}} \prod_{l \in S} K_l \prod_{l \in S^c} (A_l + C_l) \bigg).
\end{align}

We first have the following lemma, which converts the $K_l$ term from a sum over partitions covered by $\ul{G}$ into a sum over 2-refinements of $\ul{G}$.

\begin{lemma} \label{lem:cvr_to_2ref}
Let $\ul{G} \in \Pi(n)$ and let $S \subseteq \{1, \ldots, \nu(\ul{G})\}$ be a fixed subset (i.e., a fixed choice of blocks of $\ul{G}$). Then
\be \label{cvr_to_2ref}
\mu(\ul{O},\ul{G}) \prod_{l \in S} K_l\ =\ \sum_{\ul{F} \preceq \ul{G}}^{\op{2ref},S} \mu(\ul{O},\ul{F}) D(\ul{F},\ul{G}),
\ee
where $\sum_{\ul{F} \preceq \ul{G}}^{\op{2ref},S}$ runs over all the 2-refinements $\ul{F}$ of $\ul{G}$ such that $S(\ul{F},\ul{G}) = S$ is the set of blocks of $\ul{G}$ that decompose in $\ul{F}$. The term $D(\ul{F},\ul{G})$ is as in \eqref{dfg} and $K_l$ is as in \eqref{kl_term}.
\end{lemma}

\begin{rmk}
In order to have any 2-refinements $\ul{F}$ of $\ul{G}$ in the right-hand side of \eqref{cvr_to_2ref} above, each of the blocks $G_l$ ($l\in S$) must not be a singleton set. Note \eqref{cvr_to_2ref} holds either way. If $G_l$ is a singleton set for some $l \in S$, the $K_l$ factor on the left-hand side and the entire right-hand side are both empty sums, hence zero.
\end{rmk}

\begin{proof}[Proof of Lemma \ref{lem:cvr_to_2ref}]
Expanding the left-hand side, we have
\be
\mu(\ul{O},\ul{G}) \prod_{l \in S} K_l\ =\ \mu(\ul{O},\ul{G}) \prod_{l \in S} \sum_{\ul{F} \preceq \ul{G}}^{\op{cvr},G_l} \frac{\mu(\ul{O},\ul{F})}{\mu(\ul{O},\ul{G})} \int_{\R} |u| \widehat{H}(u) \widehat{H^c}(u) du.
\ee
When we expand this sum, we obtain a sum of terms, each of the form
\be
\mu(\ul{O},\ul{G}) \cdot \prod_{l \in S} \frac{\mu(\ul{O},\ul{F^l})}{\mu(\ul{O},\ul{G})} \int_{\R}|u| \widehat{H_l}(u)\widehat{H_l^c}(u)du,
\ee
where $\ul{F^l}$ is the partition covered by $\ul{G}$ by decomposing the block $G_l = H_l \cup H_l^c$ and leaving the other blocks of $\ul{G}$ unchanged.

Let $\ul{F} \preceq \ul{G}$ be the partition obtained by decomposing all the $G_l$ this way. Then each summand corresponds to a unique such $\ul{F}$, a 2-refinement of $\ul{G}$ with $S(\ul{F},\ul{G}) = S$. By the identity $\eqref{2refmobius}$, the $\mu$ coefficient becomes
\be
\mu(\ul{O},\ul{G}) \cdot \prod_{l \in S} \frac{\mu(\ul{O},\ul{F^l})}{\mu(\ul{O},\ul{G})} \ = \  \mu(\ul{O},\ul{G}) \cdot \frac{\mu(\ul{O},\ul{F})}{\mu(\ul{O},\ul{G})} \ = \  \mu(\ul{O},\ul{F}),
\ee
so the term simplifies to
\be
\mu(\ul{O},\ul{F}) \prod_{l \in S} \int_{\R} |u| \widehat{H_l}(u)\widehat{H^c_l}(u)du \ = \  \mu(\ul{O},\ul{F}) D(\ul{F},\ul{G}),
\ee
as desired.

Conversely, every 2-refinement $\ul{F} \preceq \ul{G}$ with $S(\ul{F},\ul{G}) = S$ arises (once) this way, so the two sides of $\eqref{cvr_to_2ref}$ match.
\end{proof}

We now return to the proof of Lemma \ref{lem:rmt_with_2refs}. Next, when we sum $\eqref{cvr_to_2ref}$ over all $S \subseteq \{1, \ldots, \nu(\ul{G})\}$, we get a sum over all the 2-refinements $\ul{F}$ of $\ul{G}$ (including $\ul{G}$ itself, from $S = \eset$). We have
\begin{align}
\int_{\R^n} \prod_{i=1}^n f_i(x) W^{(n)}_{USp}(x)dx \ = \
\sum_{\underline{G} \in \Pi(n)} 2^{n - \nu(\underline{G})} \sum_{\ul{F} \preceq \ul{G}}^{\op{2ref}} \mu(\ul{O},\ul{F}) D(\ul{F},\ul{G})
\prod_{l \in S^c} (A_l + C_l),
\end{align}
where $\sum_{\ul{F}\preceq\ul{G}}^{\op{2ref}}$ runs over all the 2-refinements of $\ul{G}$ (including $\ul{G}$ itself),
completing the proof of Lemma \ref{lem:rmt_with_2refs}.
\end{proof}

\bigskip

\subsubsection{Expanding the $C_l$ terms}

We expand and simplify the $\prod_l (A_l + C_l)$ term. The following lemma drastically reduces the number of terms we have to analyze.

\begin{lemma} \label{lem:nosupport}
Let $G_l$ and $G_k$ be disjoint subsets of $\{1, \ldots, n\}$. Then
\begin{align}
C_l \cdot C_k &\ = \  \nonumber
\int_{\R^{|G_l|}}\bigg( 1-\frac{\chi^*_{G_l}(u)}{(|G_l|-1)!} \bigg)\prod_{i \in G_l}\hat{f}_i(u_i)du_i \cdot \int_{\R^{|G_k|}}\bigg( 1-\frac{\chi^*_{G_k}(u)}{(|G_k|-1)!} \bigg)\prod_{i \in G_k}\hat{f}_i(u_i)du_i \\
&\ = \  0,
\end{align}
where $\chi^*_{G_l}(u)$ is shorthand for $\chi^*_{G_l}(u_{i_1},\ldots, u_{i_{|G_l|}})$, as defined in equation \eqref{chistar}. \end{lemma}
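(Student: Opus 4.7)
The plan is to show that the integrand $(1 - \chi^*_{G_l}/(|G_l|-1)!)(1 - \chi^*_{G_k}/(|G_k|-1)!)\prod_{i \in G_l \cup G_k}\hat{f}_i(u_i)$ vanishes identically as a function on $\R^{|G_l|+|G_k|}$, which makes the product of integrals (which factors since $G_l$ and $G_k$ are disjoint) equal to zero. The key ingredient is the global support hypothesis $\sum_{i=1}^n |u_i| < 2$ on the $\hat{f}_i$; individually neither $C_l$ nor $C_k$ need vanish, but their product must, and this is precisely what the support condition enforces.

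First I would unpack $\chi^*_{G_l}$. Examining the definition, for each permutation $\pi \in S_{|G_l|}$ fixing the first index and each $i$, the factor $\chi(u_{\pi(1)} + \cdots + u_{\pi(i)} - u_{\pi(i+1)} - \cdots - u_{\pi(|G_l|)})$ measures whether a particular signed partition sum of the $u_j$'s (with $j \in G_l$) lies in $[-1,1]$. Sweeping over all $(|G_l|-1)!$ such permutations, every ordered partition $G_l = A \sqcup A^c$ with the first index placed in $A$ is realized. By the symmetry $|x| = |-x|$, this is equivalent to considering every unordered partition. Therefore $\chi^*_{G_l}(u) = (|G_l|-1)!$ if and only if $|\sum_{j \in A} u_j - \sum_{j \in G_l \setminus A} u_j| \leq 1$ for every subset $A \subseteq G_l$; conversely the factor $1 - \chi^*_{G_l}/(|G_l|-1)!$ is nonzero only if there exists some $A \subseteq G_l$ for which $|\sum_{j \in A} u_j - \sum_{j \in G_l \setminus A} u_j| > 1$.

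The main step is then the following trivial-looking but decisive inequality. If such an $A$ exists, then
\begin{equation}
\sum_{j \in G_l} |u_j| \; \geq \; \Bigl|\sum_{j \in A} u_j - \sum_{j \in G_l \setminus A} u_j\Bigr| \; > \; 1.
\end{equation}
Applying the same analysis to $G_k$, if both the $G_l$-factor and the $G_k$-factor are simultaneously nonzero, then $\sum_{j \in G_l} |u_j| > 1$ and $\sum_{j \in G_k} |u_j| > 1$. Since $G_l$ and $G_k$ are disjoint subsets of $\{1, \ldots, n\}$,
\begin{equation}
\sum_{i=1}^n |u_i| \; \geq \; \sum_{j \in G_l}|u_j| + \sum_{j \in G_k}|u_j| \; > \; 2,
\end{equation}
which puts $(u_i)$ outside the support of $\prod_{i \in G_l \cup G_k} \hat{f}_i$. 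Hence at every point of $\R^{|G_l|+|G_k|}$, either one of the two bracketed factors vanishes or $\prod \hat{f}_i(u_i)$ vanishes, and the integrand is identically zero.

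I do not expect any real obstacle; the only point requiring care is the combinatorial verification that the sum over permutations $\pi \in S_{|G_l|}$ with $\pi(1)=1$ in the definition of $\chi^*$ genuinely realizes every subset-partition of $G_l$ (up to the $|x|=|-x|$ symmetry), so that $\chi^*_{G_l}$ attaining the value $(|G_l|-1)!$ is equivalent to \emph{all} signed partition sums lying in $[-1,1]$. Once that equivalence is in place, the support inequality argument above closes the lemma.
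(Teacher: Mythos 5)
Your proof is correct. The underlying idea is the same as the paper's—exploit the fact that the total support of the $\hat f_i$ is less than $2$ so the two bracketed factors cannot both be ``active'' at any point of the support—but the arrangement is mildly different. The paper argues by pigeonhole and WLOG: since $G_l$ and $G_k$ are disjoint and $\sum_i \supp(\hat f_i) < 2$, one of the two sets, say $G_l$, has total support $< 1$; then every signed sum $\sum_{j\in G_l}\vep_j u_j$ has absolute value $< 1$ on the support, every $\chi(\cdot)$ factor in $\chi^*_{G_l}$ equals $1$, and the $G_l$ bracketed factor vanishes identically, giving $C_l = 0$ directly (no characterization of when $\chi^*$ attains its maximum is needed). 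You instead argue pointwise on the product integrand: you characterize $1 - \chi^*_{G_l}/(|G_l|-1)! \ne 0$ as ``some signed $G_l$-sum exceeds $1$,'' deduce $\sum_{G_l}|u_j| > 1$, do the same for $G_k$, and conclude $\sum_{G_l\cup G_k}|u_j| > 2$, which lies outside the support of $\prod_{i\in G_l\cup G_k}\hat f_i$. Both are valid; your version avoids the WLOG step at the cost of needing the full ``$\chi^* = (|G_l|-1)!$ iff all signed sums lie in $[-1,1]$'' equivalence (which you state and justify correctly), whereas the paper's version only needs the trivial direction of that implication and is a bit shorter.
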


\begin{proof}
Since $G_l$ and $G_k$ are disjoint, we must have either
\be
\sum_{i \in G_l} \supp(\hat{f}_i) < 1\  \ \text{ or }\ \ \sum_{j \in G_k} \supp(\hat{f}_j) < 1,
\ee
since the total support is less than 2. Without loss of generality, assume $G_l$'s total support is less than 1. Then
\be
|\underbrace{\vep_{i_1} u_{i_1} + \cdots + \vep_{i_k} u_{i_k}}_{i_j \in G_l}|\ \leq\ \sum_{G_l} |u_i|\ <\ 1
\ee
in the region of support, so $\chi(\sum_{F_l} \vep_i u_i) = 1$ for any $\vep_i = \pm 1$. Since $\chi^*_{G_l}$ is a sum of $(|G_l|-1)!$ products of $\chi$'s, the $G_l$ integrand is identically 0.
\end{proof}

To emphasize the significance of this lemma, we note that instead of having to expand a product of the form $\prod_{l=1}^k (A_l + C_l)$ into $2^k$ terms
\be
\prod_{l =1}^k(A_l + C_l) \ = \  \sum_{U \subseteq \{1, \ldots, n\}} \prod_{l \in U} A_l \prod_{l \notin U} C_l,
\ee
we only end up with $k+1$ nonvanishing terms:
\be
\prod_{l=1}^k(A_l + C_l) \ = \  \prod_{l=1}^k A_l + \sum_{l=1}^k C_l \cdot \prod_{l' \ne l} A_{l'}.
\ee

Combining Lemmas \ref{lem:nosupport} and \ref{lem:rmt_with_2refs} yields the following.

\begin{lemma} \label{lem:rmt_nosupport_2refs}
With notation as in Lemma \ref{lem:rmt_with_2refs}, $\eqref{gaormt}$ is equivalent to
\begin{align}
\nonumber&\int_{\R^n} \prod_{i=1}^n f_i(x) W^{(n)}_{USp}(x)dx \ = \  \\
&\sum_{\underline{G} \in \Pi(n)} \sum_{\ul{F} \preceq \ul{G}}^{\op{2ref}} 2^{n - \nu(\underline{G})} \mu(\ul{O},\ul{F}) D(\ul{F},\ul{G})
\bigg( \prod_{l \in S^c} A_l + \sum_{l \in S^c} C_l \cdot \prod_{l' \ne l} A_{l'} \bigg).
\label{rmt_nosupport_2refs}
\end{align}
Here $S^c = S^c(\ul{F},\ul{G})$ is as in Definition \ref{s,sc,w,wc}.
\end{lemma}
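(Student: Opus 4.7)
The plan is to combine the two preceding lemmas essentially by substitution, so the argument is short but strategically important. I would start from the expression derived in Lemma \ref{lem:rmt_with_2refs}, in which the only factor still to be simplified is the product $\prod_{l \in S^c}(A_l + C_l)$, where $S^c = S^c(\ul{F},\ul{G})$ indexes the blocks of $\ul{G}$ that are not decomposed by $\ul{F}$.

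The first step is to expand this product as a sum over subsets $U \subseteq S^c$:
\begin{align*}
\prod_{l \in S^c}(A_l + C_l) \ =\ \sum_{U \subseteq S^c} \prod_{l \in U} C_l \prod_{l' \in S^c \setminus U} A_{l'}.
\end{align*}
The next step is to invoke Lemma \ref{lem:nosupport}. By construction the blocks $\{G_l\}_{l \in S^c}$ are pairwise disjoint subsets of $\{1,\ldots,n\}$, so for any two distinct indices $l, k \in U$ we have $C_l \cdot C_k = 0$. Hence every term indexed by a $U$ with $|U| \geq 2$ contributes zero to the sum, and only $U = \eset$ and the singletons $U = \{l\}$ survive. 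This collapses the expansion to
\begin{align*}
\prod_{l \in S^c}(A_l + C_l) \ =\ \prod_{l \in S^c} A_l \ +\ \sum_{l \in S^c} C_l \prod_{\substack{l' \in S^c \\ l' \ne l}} A_{l'}.
\end{align*}

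Substituting this identity into the expression from Lemma \ref{lem:rmt_with_2refs} produces exactly \eqref{rmt_nosupport_2refs}. I anticipate no real obstacle here, since both input lemmas have already been proved; the step is pure bookkeeping. Its value lies in the quantitative reduction it achieves: instead of $2^{|S^c|}$ potential summands one is left with only $|S^c| + 1$ non-vanishing terms, which is what will make the later comparison with the number-theory side tractable.
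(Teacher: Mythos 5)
Your proof is correct and matches the paper's approach exactly: the paper also expands $\prod_{l\in S^c}(A_l + C_l)$, invokes Lemma \ref{lem:nosupport} to kill every term with two or more $C_l$ factors (using that the blocks $G_l$, $l\in S^c$, are pairwise disjoint), and observes the collapse from $2^{|S^c|}$ to $|S^c|+1$ terms before substituting into Lemma \ref{lem:rmt_with_2refs}.
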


\noindent The expression $\eqref{rmt_nosupport_2refs}$ is the one we use when we start matching terms with the number theory (NT) side.

\subsection{Recasting the NT formula}
We now recast the NT density as a sum over 2-refinements of partitions, bringing it closer to the RMT formula established in Lemma $\ref{lem:rmt_with_2refs}$. This allows us to fully match one set of terms appearing on both sides. We then alter each formula slightly to reduce the problem to a Fourier transform identity, relating the terms $C_l$ on the RMT side (equation $\eqref{C_lone}$) to the integrals over $\R^k_{\geq 0}$ on the number theory side (equation $\eqref{gaocumulant1}$). 

Gao's expression for the $n$-level density of zeros of quadratic Dirichlet L-functions, which we abbreviate as $W^{(n)}_{Q}$, is (adapted from equation (2.16) in \cite{Gao}):

\begin{align}
\label{gaont2}
\int_{\R^n} \prod_{i=1}^n f_i(x) W^{(n)}_{Q}(x)dx \ = \
 &\lim_{X \to \infty} \frac{\pi^2}{4X} \sum_{d \in D(X)} \sum_{\underline{F} \in \Pi(n)} 2^{n - \nu(\underline{F})} \mu(\ul{O},\ul{F}) \prod_{l = 1}^{\nu(\underline{F})}\big( A_l + B_l \big),
\end{align}
where
\begin{align}
A_l &\ = \  \int_{\R} F_l(x) dx -\frac{1}{2} \int_{\R} \hat{F}_l(u) du, \\
B_l &\ = \  - \frac{2}{\log X} \sumint{F_l}. \label{sumint_series}
\end{align}
Here $d$ is the conductor, $\underline{F} = \{F_1, \ldots, F_{\nu(\underline{F})}\}$ and $F_l(x) = \prod_{i \in F_l} f_i(x)$, $\sum_p$ is over the primes and $\big(\small{\frac{8d}{p}}\big)$ is the Legendre symbol.

Note that the $A_l$ terms are independent of $d$ and $X$. Hence, if we expand the products, the $A_l$ terms can be pulled past  $\lim_{X \to \infty} \sum_{d \in D(X)}$, making their contributions easy to analyze:
\be
\lim_{X \to \infty} \frac{\pi^2}{4X} \sum_{d \in D(X)} \bigg(\prod_l A_l \bigg) \bigg(\prod_{l'} B_{l'} \bigg) \ = \  \bigg( \prod_l A_l \bigg) \cdot \bigg( \lim_{X \to \infty} \frac{\pi^2}{4X} \sum_{d \in D(X)} \prod_{l'} B_{l'} \bigg).
\ee
The main difficulty comes from the expressions \be \lim_{X \to \infty} \frac{\pi^2}{4X} \sum_{d \in D(X)} \prod_{l \in W} B_l, \ee where $W \subseteq \{1, \ldots, \nu(\underline{F})\},$ since the Legendre symbol $\big(\small{\frac{8d}{p}}\big)$ in the series $\eqref{sumint_series}$ introduces a dependence on $d$ and $X$.

For these, Gao develops the following formula (see equation (3.13) in \cite{Gao}):

\begin{lemma}
Let $\ul{F} = \{F_1, \ldots, F_{\nu(\ul{F})}\}$ be as above, and let $W \subseteq \{1, \ldots, \nu(\ul{F})\}$. Then
\begin{align} \label{gaocumulant1}
&\lim_{X \to \infty} \frac{\pi^2}{4X} \sum_{d \in D(X)} \prod_{l\in W} B_l \ = \  \\
\nonumber &\left(\frac{1 + (-1)^{|W|}}{2}\right) 2^{|W|} \sum_{(A;B)} \prod_{i=1}^{|W|/2} \int_0^{\infty} u_i \widehat{F}_{a_i}(u_i)\widehat{F}_{b_i}(u_i)du_i + (-2)^{|W|-1} \sum_{\substack{W_2 \subsetneq W \\ |W_2| \text{ even}}}\bigg(\sum_{(C;D)} \prod_{i=1}^{|W_2|/2}\\
\nonumber & \int_0^{\infty} u_i \widehat{F}_{c_i}(u_i)\widehat{F}_{d_i}(u_i)du_i \bigg)
\cdot
\bigg(\int_{\R^{|W^c_2|}_{\geq 0}} \bigg( \sum_{I \subsetneq W^c_2} (-1)^{|I|} \tilde{\chi}(\sum_{I^c} u_i - \sum_I u_i) \bigg) \prod_{W^c_2} \widehat{F}_i(u_i)du_i \bigg),
\end{align}
where $W^c_2 = W \setminus W_2$, and the notations $\sum_{(A;B)}$ and $\sum_{(C;D)}$ run over the ways of pairing up the elements of $W$ and $W_2$, respectively. Also, $\tilde{\chi}$ is the indicator function of the interval $(1, \infty)$.
Empty products are 1.
\end{lemma}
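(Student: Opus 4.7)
The plan is to expand $\prod_{l \in W} B_l$ as a multiple sum over prime tuples $(p_l)_{l\in W}$, interchange it with the average $\frac{\pi^2}{4X}\sum_{d \in D(X)}$, and then classify each tuple by analyzing the character sum $\frac{1}{X}\sum_{d}\left(\frac{8d}{m}\right)$ for $m = \prod_{l}p_l$. This reduces the lemma to identifying which prime-multiplicity patterns survive the averaging and to tracking the normalization constants $2^{|W|}$ and $(-2)^{|W|-1}$ that appear in the two groups of terms.

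First I would isolate the diagonal (``fully paired'') contribution. The average $\frac{1}{X}\sum_d \left(\frac{8d}{m}\right)$ is negligible unless $m$ is a perfect square, in which case it tends to a constant depending on the density of $d \in D(X)$ coprime to $m$. Among tuples making $m$ a square, the leading contribution comes from those whose primes match in pairs: triples and higher coincidences are lower order, since $\sum_p (\log p)^k/p^{k/2}$ converges absolutely for $k \geq 3$. Pairing the elements of $W$ requires $|W|$ even, producing the prefactor $\tfrac{1}{2}(1+(-1)^{|W|})$, and each pair $(a_i,b_i)$ contributes $\sum_p \frac{\log^2 p}{p}\widehat{F}_{a_i}(\log p/\log X)\widehat{F}_{b_i}(\log p/\log X)$. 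Partial summation together with the prime number theorem converts this, after pulling out the $(\log X)^{-1}$ from each $B_l$, into $\int_0^{\infty} u\,\widehat{F}_{a_i}(u)\widehat{F}_{b_i}(u)\,du$; the constant $2^{|W|}$ tracks the $-2/\log X$ factors present in $B_l$.

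Next I would extract the secondary contribution from tuples in which only a proper subset $W_2 \subsetneq W$ pairs up, while the primes indexed by $W_2^c$ form a non-square product. Here the sum over $d$ is evaluated by Poisson summation modulo $8\prod_{l \in W_2^c} p_l$, following the approach of Soundararajan as adapted in \cite{Gao}. The Gauss sum on the dual side is supported on squares, and after unfolding the surviving contribution becomes an integral over $\R^{|W_2^c|}_{\geq 0}$ of $\prod_{W_2^c}\widehat{F}_l(u_l)$ weighted by an indicator that certain signed linear combinations of the $u_l$ exceed $1$ (the condition that a dual square exceed the main term after change of variable $u_l = \log p_l / \log X$). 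Summing over sign choices of the dual variable by inclusion-exclusion then produces the alternating indicator $\sum_{I\subsetneq W_2^c}(-1)^{|I|}\tilde{\chi}(\sum_{I^c}u_l - \sum_I u_l)$. The $(-2)^{|W|-1}$ prefactor emerges from the $(-2/\log X)^{|W|}$ in the expansion of $\prod B_l$ together with a factor of $\tfrac{1}{2}$ absorbed into the $W_2$-pairings.

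The main technical obstacle is the Poisson-summation step for the secondary term: one must truncate the dual sum and justify its convergence under the support hypothesis $\sum_{i=1}^n|u_i| < 2$, which is precisely the delicate estimate that enforces this range in the final theorem. Since the present paper treats $\eqref{gaocumulant1}$ as a quoted input (the paper explicitly assumes familiarity with \cite{Gao}), I would cite \cite{Gao} for the error analysis and reserve the original combinatorial work for the subsequent recasting of this formula as a sum over 2-refinements of partitions, running the analogue of the 2-refinement maneuver carried out in Lemma \ref{lem:rmt_with_2refs} to bring it into alignment with the RMT side.
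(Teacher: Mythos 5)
The paper does not prove this lemma at all: it explicitly quotes it as equation (3.13) of Gao's thesis and treats it as an imported input, which is exactly what you conclude by ending with ``I would cite \cite{Gao} for the error analysis.'' Your sketch of the underlying argument (expand $\prod_l B_l$, average the Legendre symbol over $d$, split the prime tuples into a square/diagonal piece giving pairings of $W$ and a Poisson-summation piece over non-square products giving the alternating $\tilde\chi$-indicators) is a fair high-level account of what Gao actually does, so this is essentially the same approach as the paper; the only caveat is that the constant-tracking near the end is loose --- $(-2)^{|W|-1}$ arises from dividing $(-2)^{|W|}$ by $-2$, not by $2$ as written --- but since you defer the precise bookkeeping to \cite{Gao} this does not affect the soundness of the plan.
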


We now obtain the general formula for the $n$-level density by combining expressions \eqref{gaont2} and \eqref{gaocumulant1} and using the expansion
\be
\prod_{i=1}^{\nu(\underline{F})} (A_l + B_l) \ = \  \sum_{W \subseteq \{1, \ldots, \nu(\underline{F})\}} \prod_{W^c} A_l \prod_W B_l.
\ee

\subsubsection{Reindexing the NT side} We put the NT formula in a form closer to the RMT formula, as a sum indexed by 2-refinements of partitions. We establish the following.

\begin{lemma} \label{lem:nt_with_2refs}
Gao's expression $\eqref{gaont2}$ for the NT density is equivalent to
\begin{align} \label{nt_with_2refs}
\nonumber &\int_{\R^n} \prod_{i=1}^n f_i(x) W^{(n)}_{Q}(x)dx \ = \  \\
&\sum_{\underline{G} \in \Pi(n)} \sum_{\ul{F} \preceq \ul{G}}^{\op{2ref}} 2^{n - \nu(\underline{G})} \mu(\ul{O},\ul{F}) D(\ul{F},\ul{G}) \bigg( \prod_{l \in S^c} A_l - \frac{1}{2}\sum_{T \subseteq S^c} E(\ul{G},T) \prod_{l\in S^c- T} A_l \bigg),
\end{align}
where $\sum_{\ul{F} \preceq \ul{G}}^{\op{2ref}}$ runs over the 2-refinements $\ul{F}$ of $\ul{G}$, the sets $S(\ul{F},\ul{G})$ and $S^c = S^c(\ul{F},\ul{G})$ are as in Definition \ref{s,sc,w,wc}, and
\begin{align}
D(\ul{F},\ul{G}) &\ = \  \prod_{l \in S(\ul{F},\ul{G})} \int_\R |u|\widehat{H_l}(u)\widehat{H^c_l}(u)du, \\
A_l &\ = \  \frac{-1}{2} \int_{\R} \widehat{G_l}(u)du + \int_{\R} G_l(x) dx, \\
E(\ul{G},T) &\ = \  2^{|T|} \int_{\R_{\geq0}^{|T|}} \bigg( \sum_{I \subseteq T} (-1)^{|I|} \tilde\chi(\sum_I u_i - \sum_{I^c} u_i ) \bigg) \prod_{l \in T} \widehat{G_l}(u_l)du_l, \label{nt_etermone}
\end{align}
where for $l \in S(\ul{F},\ul{G})$, $G_l = H_l \cup H_l^c$ is the decomposition of the block $G_l$ into blocks of $\ul{F}$, and $\tilde\chi$ is the indicator function of $(1, \infty)$. Empty products are 1 and empty sums, in particular $E(\ul{G},\eset)$, are 0.
\end{lemma}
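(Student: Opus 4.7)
The plan is to expand Gao's expression \eqref{gaont2} and reindex the resulting sum in terms of pairs $(\ul F,\ul G)$ with $\ul F$ a 2-refinement of $\ul G$, in direct analogy with the RMT reindexing carried out in Lemma \ref{lem:rmt_with_2refs}. Concretely, I first expand
\[
\prod_{l=1}^{\nu(\ul F)}(A_l+B_l)\;=\;\sum_{W\subseteq\{1,\ldots,\nu(\ul F)\}}\prod_{l\in W^c}A_l\prod_{l\in W}B_l,
\]
and apply the cumulant formula \eqref{gaocumulant1} to $\lim_{X\to\infty}\tfrac{\pi^2}{4X}\sum_d\prod_{l\in W}B_l$. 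This produces, for each $(\ul F,W)$, a sum over even-sized subsets $W_2\subseteq W$ (with $W_2=W$ in the first term of \eqref{gaocumulant1} and $W_2\subsetneq W$ in the second) together with a pairing $(C;D)$ of the elements of $W_2$.

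Next I set up the key bijective correspondence. By Remark \ref{rmk:pairs_and_2refs}, specifying $(\ul F,W,W_2,(C;D))$ is the same as specifying a pair $(\ul F,\ul G)$ where $\ul G$ is obtained from $\ul F$ by merging the paired blocks of $W_2$ (so $\ul F$ is a 2-refinement of $\ul G$ with $S(\ul F,\ul G)$ corresponding to the pairs), together with a subset $T=W\setminus W_2\subseteq S^c(\ul F,\ul G)$ (the unpaired blocks of $\ul F$ on which the indicator-function integral acts). Under this correspondence, $|S(\ul F,\ul G)|=|W_2|/2$, $\nu(\ul F)=\nu(\ul G)+|S|$, and $|W|=2|S|+|T|$.

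The third step is to match coefficients. Using evenness of the $\widehat{F_i}$, each pair integral satisfies $\int_0^\infty u\,\widehat{F_a}(u)\widehat{F_b}(u)\,du=\tfrac12\int_\R|u|\,\widehat{F_a}(u)\widehat{F_b}(u)\,du$, so the product of the $|S|$ pair integrals is $2^{-|S|}D(\ul F,\ul G)$. The first term of \eqref{gaocumulant1} (where $T=\eset$) combined with the outer $2^{n-\nu(\ul F)}$ and the prefactor $2^{|W|}=2^{2|S|}$ collapses to $2^{n-\nu(\ul G)}\mu(\ul O,\ul F)D(\ul F,\ul G)\prod_{l\in S^c}A_l$, matching the main term of \eqref{nt_with_2refs}. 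For the second term, the prefactor $(-2)^{|W|-1}$ contributes $-(-1)^{|T|}2^{2|S|+|T|-1}$; Gao's integrand $\sum_{I\subsetneq W_2^c}(-1)^{|I|}\tilde\chi(\sum_{I^c}u-\sum_Iu)$ differs from the integrand of $E(\ul G,T)$ by an overall factor of $(-1)^{|T|}$, seen via the substitution $J=T\setminus I$ and the observation that the $I=\eset$ term in \eqref{nt_etermone} vanishes on $\R_{\ge0}^{|T|}$ since $-\sum_Tu\le 0<1$. All the signs and powers of $2$ then collapse to the target coefficient $-\tfrac12\cdot 2^{n-\nu(\ul G)}\mu(\ul O,\ul F)D(\ul F,\ul G)E(\ul G,T)\prod_{l\in S^c\setminus T}A_l$.

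Finally I would swap the order of summation so $\ul G$ is the outer index, with $\ul F$ ranging over 2-refinements of $\ul G$ and $T$ over subsets of $S^c(\ul F,\ul G)$, noting that including $T=\eset$ in the inner sum of \eqref{nt_with_2refs} is harmless since $E(\ul G,\eset)=0$. This yields \eqref{nt_with_2refs}. I expect the main technical hurdle to be the sign bookkeeping: the $(-1)^{|T|}$ produced by the $I\leftrightarrow I^c$ substitution on the indicator integrand must cancel precisely against the sign in $(-2)^{|W|-1}$ (using the parity of $|W|=2|S|+|T|$) to produce the clean $-\tfrac12$ coefficient; conceptually the rest is just the same type of reindexing as in Lemmas \ref{lem:cvr_to_2ref} and \ref{lem:rmt_with_2refs}.
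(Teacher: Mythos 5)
Your proposal is correct and follows essentially the same route as the paper: expand $\prod(A_l+B_l)$, apply Gao's cumulant formula, reinterpret pairings as 2-coarser partitions via Remark~\ref{rmk:pairs_and_2refs}, and swap the order of summation to make $\ul G$ the outer index with $T\subseteq S^c$ inside. The paper organizes this by first isolating the conversion to $2$-coarser partitions in Lemma~\ref{lem:jake_cumulant} and then splitting the density into $S_1$ and $S_2$ before swapping, whereas you do the same reindexing in one pass; you are also somewhat more explicit than the paper about the $I\leftrightarrow I^c$ substitution and the vanishing of the $I=\eset$ term that produce the $(-1)^{|T|}$ sign, which does cancel against the parity of $|W|=2|S|+|T|$ exactly as you anticipate.
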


In order to prove Lemma \ref{lem:nt_with_2refs}, we first alter formula $\eqref{gaocumulant1}$ for the $\prod B_l$ terms.


\begin{lemma} \label{lem:jake_cumulant}
Let $\ul{F} \in \Pi(n)$ and $W \subseteq \{1, \ldots, \nu(\ul{F})\}$. The following formula is equivalent to Gao's Lemma \ref{gaocumulant1}:
\begin{align} \label{jake_cumulant}
\lim_{X \to \infty} \frac{\pi^2}{4X} \sum_{d \in D(X)} \prod_{l\in W} B_l
&\ = \  \left(\frac{1 + (-1)^{|W|}}{2}\right) \bigg( \sum_{\ul{G} \succeq \ul{F}}^{\op{2cor},W} 2^{\nu(\ul{F})-\nu(\ul{G})}  D(\ul{F},\ul{G}) \bigg) \nonumber\\
& - \frac{1}{2} \sum_{\substack{W_2 \subseteq W \\ |W_2| \text{ even}}}
\bigg( \sum_{\ul{G} \succeq \ul{F}}^{\op{2cor},W_2} 2^{\nu(\ul{F})-\nu(\ul{G})}  D(\ul{F},\ul{G}) \bigg) E(\ul{F},W^c_2),
\end{align}
where $\sum_{\ul{G} \succeq \ul{F}}^{\op{2cor},W}$ and $\sum_{\ul{G} \succeq \ul{F}}^{\op{2cor},W_2}$ run over the partitions $\ul{G}$ 2-coarser than $\ul{F}$ with $W(\ul{F},\ul{G}) = W$ and $W_2$, respectively, and the other notation is as in Lemma \ref{lem:nt_with_2refs}.
\end{lemma}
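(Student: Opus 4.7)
The plan is to show that Lemma \ref{lem:jake_cumulant} is a cosmetic rewriting of equation \eqref{gaocumulant1}, obtained by reindexing the two sums of pairings---of $W$ in the first term, and of $W_2$ in the second---as sums over 2-coarsenings of $\ul{F}$ with the appropriate $W(\ul{F},\ul{G})$. The bijection I would use is exactly the one from Remark \ref{rmk:pairs_and_2refs}: given a pairing $(A;B)$ of $W$ into $|W|/2$ unordered pairs $\{a_i,b_i\}$, the corresponding 2-coarsening $\ul{G}$ is the one whose blocks are the $F_l$ for $l\notin W$ together with the unions $F_{a_i}\cup F_{b_i}$. Under this identification, $S(\ul{F},\ul{G})$ indexes the pairs, each decomposition $H_l\cup H_l^c=G_l$ recovers $F_{a_i}$ and $F_{b_i}$, and $\nu(\ul{F})-\nu(\ul{G})=|W|/2$.

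For the pairing integrals themselves, since each $f_i$ is even and real, so is each $\widehat{F_l}$, and thus
\be
\int_\R |u| \widehat{H_l}(u)\widehat{H_l^c}(u)\,du \ = \ 2 \int_0^\infty u \widehat{H_l}(u)\widehat{H_l^c}(u)\,du.
\ee
This gives $2^{\nu(\ul{F})-\nu(\ul{G})} D(\ul{F},\ul{G}) = 2^{|W|} \prod_{i=1}^{|W|/2} \int_0^\infty u \widehat{F_{a_i}}(u)\widehat{F_{b_i}}(u)\,du$, matching Gao's coefficient $2^{|W|}$ on the first term (and, applied with $W$ replaced by $W_2$, the corresponding pairing piece of the second term). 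The prefactor $(1+(-1)^{|W|})/2$ in Lemma \ref{lem:jake_cumulant} handles the case $|W|$ odd, for which the pairing sum is empty.

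The second term then comes down to comparing Gao's $\tilde\chi$ integral against $E(\ul{F},W_2^c)$. Substituting $J=T\setminus I$ (with $T=W_2^c$) in the bracket $\sum_{I\subseteq T}(-1)^{|I|}\tilde\chi(\sum_I u_i-\sum_{I^c}u_i)$ defining $E$ produces a global factor $(-1)^{|T|}$ and exchanges the roles of $I$ and $I^c$; the boundary terms $I=\eset$ and $I=T$ contribute zero because $\tilde\chi$ vanishes on nonpositive arguments. Hence $E(\ul{F},W_2^c)=(-2)^{|T|}$ times Gao's $\tilde\chi$ integral. Assembling the powers of $2$ and signs gives
\be
(-2)^{|W|-1} \cdot (\text{Gao's }\tilde\chi\text{ integral}) \ = \ -(-1)^{|W_2|}\cdot\tfrac{1}{2}\cdot 2^{|W_2|}\cdot E(\ul{F},W_2^c),
\ee
which matches the coefficient $-\tfrac{1}{2}$ of Lemma \ref{lem:jake_cumulant} precisely when $|W_2|$ is even, which is the summation condition in both formulas. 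The case $W_2=W$ is included in the new formula but excluded in Gao's; these ranges agree because $E(\ul{F},\eset)=0$ (the only term in the bracket is $I=\eset$, giving $\tilde\chi(0)=0$).

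The main obstacle is the sign bookkeeping in the last step: the identity works only because $|W_2|$ is required to be even, and if this parity condition were dropped the two formulations would disagree by a sign. Everything else is a direct translation between the two indexing schemes via the bijection from Remark \ref{rmk:pairs_and_2refs}.
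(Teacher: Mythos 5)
Your proof is correct and follows the same route as the paper: reindex both pairing sums $\sum_{(A;B)}$ and $\sum_{(C;D)}$ via the bijection of Remark \ref{rmk:pairs_and_2refs}, use evenness of $\widehat{F_l}$ to turn each $\int_0^\infty u\cdots$ into $\tfrac12\int_\R|u|\cdots$, and track the factor $2^{\nu(\ul F)-\nu(\ul G)}=2^{|W|/2}$. That is precisely identity \eqref{3.18} in the paper, applied once to $W$ and once to each $W_2$. The paper's proof actually stops there and leaves the match between Gao's $\tilde\chi$ integral and the $E(\ul F,W_2^c)$ term implicit (it is a definitional unwind), whereas you spell it out: the substitution $I\mapsto I^c$ inside the bracket produces $(-1)^{|T|}$ and flips $\tilde\chi(\sum_{I^c}-\sum_I)$, the extra term $I=T$ vanishes because $\tilde\chi(\text{nonpositive})=0$, and the bookkeeping $(-2)^{|W|-1}=(-2)^{|W_2|-1}\cdot(-2)^{|T|}$ collapses to the coefficient $-\tfrac12$ once $|W_2|$ is even. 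Your observation that the range $W_2\subsetneq W$ may be enlarged to $W_2\subseteq W$ because $E(\ul F,\eset)=0$ is also the right way to reconcile the two sums. One small inaccuracy: you write that \emph{both} boundary cases $I=\eset$ and $I=T$ vanish, but only $I=T$ does (it gives $\tilde\chi$ of a nonpositive argument); $I=\eset$ gives $\tilde\chi(\sum_T u_i)$, which need not be zero and is in fact present in both Gao's sum and in $E$. This does not affect the conclusion, since only the $I=T$ term needs to disappear to extend the range. In short, same proof, but you fill in the $E$-term verification that the paper omits.
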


\begin{proof}
If $|W|$ is even, we claim
\be \label{3.18}
2^{|W|} \sum_{(A;B)} \prod_{i=1}^{|W|/2} \int_0^{\infty} u\ \widehat{F}_{a_i}(u)\widehat{F}_{b_i}(u)du
\ = \  \sum_{\ul{G} \succeq \ul{F}}^{\op{2cor},W} 2^{\nu(\ul{F}) - \nu(\ul{G})} D(\ul{F},\ul{G}),
\ee
where the left-hand side is as in Gao's formula, \eqref{gaocumulant1}. (The same identity holds with $W$ replaced by $W_2$.)
%
%
To see this, first note that by Remark $\ref{rmk:pairs_and_2refs}$, each way of pairing up the elements of an even subset $W \subseteq \{1, \ldots, \nu(\ul{F})\}$ corresponds to a unique partition $\ul{G}$ that is 2-coarser than $\ul{F}$, with $W(\ul{F},\ul{G}) = W$. This correspondence is one of the key ingredients, as it allows us to begin expressing the sum in terms of 2-coarser partitions. Later we will switch orders of summation, converting sums over 2-coarser partitions into sums over 2-refinements, which is what we have on the RMT side.

Thus
\be
2^{|W|} \sum_{(A;B)} \prod_{i=1}^{|W|/2} \int_0^{\infty} u\ \widehat{F}_{a_i}(u)\widehat{F}_{b_i}(u)du
\ = \  2^{|W|} \sum_{\ul{G} \succeq \ul{F}}^{\op{2cor},W} \prod_{i=1}^{|W|/2} \int_0^{\infty} u\ \widehat{F}_{a_i}(u)\widehat{F}_{b_i}(u)du.
\ee

For the integrands, observe that pairing $F_{l_1}$ with $F_{l_2}$ to form a block $G_l$ of $\ul{G}$ is equivalent to decomposing $G_l$ into two subsets $G_l = H_l \cup H_l^c$ with $H_l = F_{l_1}$ and $H_l^c = F_{l_2}$. Since each $\widehat{F_{l_i}}$ is an even function, we can replace $\int_0^\infty$ with $\frac{1}{2} \int_\R$, and $u\ \widehat{F_{l_1}}(u) \widehat{F_{l_2}}(u)$ becomes $|u|\widehat{F_{l_1}}(u) \widehat{F_{l_2}}(u)$, as in the definition of the term $D(\ul{F},\ul{G})$:
\be
2^{|W|} \sum_{\ul{G} \succeq \ul{F}}^{\op{2cor},W} \prod_{i=1}^{|W|/2} \int_0^{\infty} u\ \widehat{F}_{a_i}(u)\widehat{F}_{b_i}(u)du
\ = \  2^{|W|} \sum_{\ul{G} \succeq \ul{F}}^{\op{2cor},W} 2^{-|W|/2} D(\ul{F},\ul{G}).
\ee
Finally, for the $2^{|W|/2}$ coefficient, observe that $\nu(\ul{F})-\nu(\ul{G}) = |W|/2$, since each pairing reduces the total number of blocks by 1. We apply identity \eqref{3.18} to both the $\sum_{(A;B)}$ and $\sum_{(C;D)}$ terms in Gao's expression \eqref{gaocumulant1} to obtain the desired form \eqref{jake_cumulant}.
\end{proof}

\noindent We return to the proof of Lemma \ref{lem:nt_with_2refs}. Applying Lemma \ref{lem:jake_cumulant} to Gao's expression \eqref{gaont2} for the NT density gives
\begin{align}
\nonumber&\int_{\R^n} \prod_{i=1}^n f_i(x) W^{(n)}_{Q}(x)dx  \\
%
\nonumber&\ = \  \sum_{\underline{F} \in \Pi(n)} 2^{n - \nu(\underline{F})} \mu(\ul{O},\ul{F}) \sum_{W \subseteq \{1, \ldots, \nu(\ul{F})\}} \Big( \prod_{l\notin W} A_l \Big)  \Big( \lim_{X \to \infty} \frac{\pi^2}{4X} \sum_{d \in D(X)} \prod_{l \in W} B_l \Big) \\
&\ = \  S_1 + S_2,
\end{align}
where
\begin{align}
\label{s1_term}
S_1 &\ = \  \sum_{\underline{F} \in \Pi(n)} 2^{n - \nu(\underline{F})} \mu(\ul{O},\ul{F}) \sum_{\substack{W \subseteq \{1, \ldots, \nu(\ul{F})\} \\ |W| \text{ even}}} \Big( \prod_{l\notin W} A_l \Big) \bigg( \sum_{\ul{G} \succeq \ul{F}}^{\op{2cor},W} 2^{\nu(\ul{F})-\nu(\ul{G})}  D(\ul{F},\ul{G}) \bigg), \\
\label{s2_term}
S_2 &\ = \  - \frac{1}{2} \sum_{\underline{F} \in \Pi(n)} 2^{n - \nu(\underline{F})} \mu(\ul{O},\ul{F}) \sum_{W \subseteq \{1, \ldots, \nu(\ul{F})\}} \Big( \prod_{l\notin W} A_l \Big) \cdot \nonumber\\
& \qquad \qquad \sum_{\substack{W_2 \subseteq W \\ |W_2| \text{ even}}}
\bigg( \sum_{\ul{G} \succeq \ul{F}}^{\op{2cor},W_2} 2^{\nu(\ul{F})-\nu(\ul{G})}  D(\ul{F},\ul{G}) \bigg) E(\ul{F},W^c_2).
 \end{align}

We work with $S_1$ and $S_2$ separately, since $S_2$ includes an extra summation. For $S_1$, we have
\begin{align}
\nonumber
S_1 &\ = \  \sum_{\underline{F} \in \Pi(n)} 2^{n - \nu(\underline{F})} \mu(\ul{O},\ul{F}) \sum_{\substack{W \subseteq \{1, \ldots, \nu(\ul{F})\} \\ |W| \text{ even}}} \Big( \prod_{l\notin W} A_l \Big) \bigg( \sum_{\ul{G} \succeq \ul{F}}^{\op{2cor},W} 2^{\nu(\ul{F})-\nu(\ul{G})}  D(\ul{F},\ul{G}) \bigg) \\
&\ = \  \sum_{\underline{F} \in \Pi(n)} \sum_{\substack{W \subseteq \{1, \ldots, \nu(\ul{F})\} \\ |W| \text{ even}}} \sum_{\ul{G} \succeq \ul{F}}^{\op{2cor},W} 2^{n - \nu(\underline{G})} \mu(\ul{O},\ul{F}) D(\ul{F},\ul{G}) \prod_{l\notin W} A_l.
\end{align}
Now the double sum $\displaystyle{\sum_{\substack{W \subseteq \{1, \ldots, \nu(\ul{F})\} \\ |W| \text{ even}}} \sum_{\ul{G} \succeq \ul{F}}^{\op{2cor},W}}$ is equivalent to summing over all the partitions $\ul{G}$ that are 2-coarser than $\ul{F}$, i.e., $\sum_{\ul{G} \succeq \ul{F}}^{\op{2cor}}$, since every such $\ul{G}$ arises exactly once this way (including $\ul{G} = \ul{F}$, from the case $W = \eset$). Note that the set $\{l \notin W\}$ is just the list of blocks that are common to both $\ul{F}$ and $\ul{G}$, so it is the same as $S^c(\ul{F},\ul{G})$. By switching the order of summation of $\ul{F}$ and $\ul{G}$, we obtain
\begin{align} \label{s1term-final}
 \nonumber S_1 &\ = \  \sum_{\underline{F} \in \Pi(n)} \sum_{\ul{G} \succeq \ul{F}}^{\op{2cor}} 2^{n - \nu(\underline{G})} \mu(\ul{O},\ul{F}) D(\ul{F},\ul{G}) \prod_{l \in S^c(\ul{F},\ul{G})} A_l \\
&\ = \  \sum_{\underline{G} \in \Pi(n)} \sum_{\ul{F} \preceq \ul{G}}^{\op{2ref}} 2^{n - \nu(\underline{G})} \mu(\ul{O},\ul{F}) D(\ul{F},\ul{G}) \prod_{l \in S^c(\ul{F},\ul{G})} A_l.
\end{align}

The term $S_2$ is more delicate, since it includes an extra summation:
\begin{align} \nonumber
S_2 &\ = \  - \frac{1}{2} \sum_{\underline{F} \in \Pi(n)} 2^{n - \nu(\underline{F})} \mu(\ul{O},\ul{F}) \sum_{W \subseteq \{1, \ldots, \nu(\ul{F})\}} \Big( \prod_{l\notin W} A_l \Big) \nonumber\\ & \ \ \ \ \ \ \ \ \ \ \ \ \ \ \ \ \ \ \ \ \  \sum_{\substack{W_2 \subseteq W \\ |W_2| \text{ even}}}
\bigg( \sum_{\ul{G} \succeq \ul{F}}^{\op{2cor},W_2} 2^{\nu(\ul{F})-\nu(\ul{G})}  D(\ul{F},\ul{G}) \bigg) E(\ul{F},W^c_2) \nonumber\\
&\ = \  - \frac{1}{2} \sum_{\underline{F} \in \Pi(n)} \sum_{W \subseteq \{1, \ldots, \nu(\ul{F})\}} \sum_{\substack{W_2 \subseteq W \\ |W_2| \text{ even}}} \sum_{\ul{G} \succeq \ul{F}}^{\op{2cor},W_2} 2^{n - \nu(\underline{G})} \mu(\ul{O},\ul{F})
D(\ul{F},\ul{G}) E(\ul{F},W \setminus W_2) \prod_{l\notin W} A_l.
\end{align}

We switch the choice of subsets $W, W_2 \subseteq \{1, \ldots, \nu(\ul{F})\}$. In particular, the choices of $W$ and $W_2$ effectively partition $\{1, \ldots, \nu(\ul{F})\}$ into three disjoint subsets:\\

\begin{tabular}{rl}
$W_2$ (with $|W_2|$ even): & lists the blocks of $\ul{F}$ merged to form blocks of $\ul{G}$ in the $D(\ul{F},\ul{G})$ term \\
$W - W_2$: & lists the blocks to go in the $E$ term, \\
$\{1, \ldots, \nu(\ul{F})\} - W$: & lists the blocks to go in the $\prod A_l$ term.
\end{tabular} \\

We switch this so that $W_2$ is chosen first, which allows us to pull the $\sum_{\ul{F} \leq \ul{G}}^{\op{2cor},W_2}$ to the front. In other words, we choose $W_2$, followed by a disjoint set $T \subseteq \{1, \ldots, \nu(\ul{F})\} - W_2$, which lists the blocks to go in the $E$ term. With this change, the $E(\ul{F}, W \setminus W_2)$ is replaced by $E(\ul{F}, T)$, and $\prod_{l \notin W} A_l$ becomes $\prod_{l \notin W_2 \cup T} A_l$.
%
Now we can pull the $\sum_{\ul{G} \succeq \ul{F}}^{\op{2cor},W_2}$ outward:
\begin{align} \nonumber
S_2 &\ = \  - \frac{1}{2} \sum_{\underline{F} \in \Pi(n)} \sum_{\substack{W_2 \subseteq \{1, \ldots, \nu(\ul{F})\} \\ |W_2| \text{ even}}} \sum_{T \subseteq W_2^c} \sum_{\ul{G} \succeq \ul{F}}^{\op{2cor},W_2} 2^{n - \nu(\underline{G})} \mu(\ul{O},\ul{F})
D(\ul{F},\ul{G}) E(\ul{F},T) \prod_{l\notin T \cup W_2} A_l \\
&\ = \  - \frac{1}{2} \sum_{\underline{F} \in \Pi(n)} \sum_{\substack{W_2 \subseteq \{1, \ldots, \nu(\ul{F})\} \\ |W_2| \text{ even}}} \sum_{\ul{G} \succeq \ul{F}}^{\op{2cor},W_2}  \sum_{T \subseteq W_2^c} 2^{n - \nu(\underline{G})} \mu(\ul{O},\ul{F})
D(\ul{F},\ul{G}) E(\ul{F},T) \prod_{l\notin T \cup W_2} A_l.
\end{align}
Now the summation $\sum_{|W_2| \text{ even}} \sum_{\ul{G}}^{\op{2cor}, W_2}$ is the same as what we encountered in our analysis of $S_1$, a sum over \emph{all} partitions $\ul{G}$ that are 2-coarser than $\ul{F}$ (including $\ul{G} = \ul{F}$, from the case $W_2 = \eset$). The set $W_2^c$ is the same as $S^c(\ul{F},\ul{G})$, the list of blocks common to both partitions, so we rewrite $\sum_{T \subseteq W_2^c}$ as $\sum_{T \subseteq S^c(\ul{F},\ul{G})}$ and $\prod_{l \notin T\cup W_2}$ with $\prod_{l \in S^c - T}$. We obtain
\begin{align}
S_2 &\ = \  - \frac{1}{2} \sum_{\underline{F} \in \Pi(n)} \sum_{\ul{G} \succeq \ul{F}}^{\op{2cor}} 2^{n - \nu(\underline{G})} \mu(\ul{O},\ul{F}) D(\ul{F},\ul{G})
\sum_{T \subseteq S^c(\ul{F},\ul{G})} E(\ul{F},T) \prod_{l\in S^c- T} A_l.
\end{align}
Now we switch the $\sum_{\ul{F}}$ and $\sum_{\ul{G}}$, converting $S_2$ into a sum over 2-refinements:
\begin{align} \label{s2term-final}
S_2 &\ = \  - \frac{1}{2} \sum_{\underline{G} \in \Pi(n)} \sum_{\ul{F} \preceq \ul{G}}^{\op{2ref}} 2^{n - \nu(\underline{G})} \mu(\ul{O},\ul{F})
D(\ul{F},\ul{G})  \sum_{T \subseteq S^c(\ul{F},\ul{G})} E(\ul{F},T) \prod_{l\in S^c- T} A_l.
\end{align}

Finally, we rewrite $E(\ul{F}, T) = E(\ul{G},T)$. This is just a relabeling, since $T \subseteq S^c,$ the set of blocks $F_l \in \ul{F}$ that are unchanged in $\ul{G}$, and the integral over $\R_{\geq 0}^{|T|}$ in the definition of $E$ (equation \eqref{nt_etermone}) only involves the functions $\widehat{F_l}(u_l)$ where $l \in T$. Nonetheless, it is important as it expresses the $E$ term in terms of the outermost summation $\sum_{\ul{G}}$.

Putting together our expressions $\eqref{s1term-final}$ for $S_1$ and $\eqref{s2term-final}$ for $S_2$ yields an NT formula expressed in terms of 2-refinements:
\begin{align}
\nonumber &\int_{\R^n} \prod_{i=1}^n f_i(x) W^{(n)}_{Q}(x)dx \ = \  \\
&\sum_{\underline{G} \in \Pi(n)} \sum_{\ul{F} \preceq \ul{G}}^{\op{2ref}} 2^{n - \nu(\underline{G})} \mu(\ul{O},\ul{F}) D(\ul{F},\ul{G}) \bigg( \prod_{l \in S^c} A_l - \frac{1}{2}\sum_{T \subseteq S^c} E(\ul{G},T) \prod_{l\in S^c- T} A_l \bigg).
\end{align}
This completes the proof of Lemma \ref{lem:nt_with_2refs}. \qed

\begin{rmk} The key step in the proof of Lemma \ref{lem:nt_with_2refs} was to switch the orders of summation of $\ul{F}$ and $\ul{G}$ in the NT density, replacing a sum over 2-coarser partitions by a sum over 2-refinements. On the RMT side, 2-refinements already appeared naturally as products of the terms $Q_l$ (equation \eqref{Ql_term}), and thus no switch was necessary.
\end{rmk}

\subsection{Reducing to the Fourier identity}

Lemmas \ref{lem:rmt_nosupport_2refs} and \ref{lem:nt_with_2refs} establish the following forms for the RMT and NT density expressions, $W_{\op{USp}}^{(n)}$ and $W_Q^{(n)}$:\\
\begin{align}
&\op{RMT}: \ \ \nonumber\int_{\R^n} \prod_{i=1}^n f_i(x) W^{(n)}_{USp}(x)dx \ = \  \\
\label{3.28}
&\sum_{\underline{G} \in \Pi(n)} \sum_{\ul{F} \preceq \ul{G}}^{\op{2ref}} 2^{n - \nu(\underline{G})} \mu(\ul{O},\ul{F}) D(\ul{F},\ul{G})
\bigg( \prod_{l \in S^c} A_l + \sum_{l \in S^c} C_l \cdot \prod_{l' \ne l} A_{l'} \bigg), \\
&\op{NT}: \ \ \nonumber \int_{\R^n} \prod_{i=1}^n f_i(x) W^{(n)}_{Q}(x)dx \ = \  \\
\label{3.29}
&\sum_{\underline{G} \in \Pi(n)} \sum_{\ul{F} \preceq \ul{G}}^{\op{2ref}} 2^{n - \nu(\underline{G})} \mu(\ul{O},\ul{F}) D(\ul{F},\ul{G}) \bigg( \prod_{l \in S^c} A_l - \frac{1}{2}\sum_{T \subseteq S^c} E(\ul{G},T) \prod_{l\in S^c- T} A_l \bigg),
\end{align}
where $\sum_{\ul{F} \preceq \ul{G}}^{\op{2ref}}$ runs over the 2-refinements $\ul{F}$ of $\ul{G}$, and
\begin{align}
\label{dfg-nt}
D(\ul{F},\ul{G}) &\ = \  \prod_{l \in S(\ul{F},\ul{G})} \int_\R |u|\widehat{H_l}(u)\widehat{H^c_l}(u)du, \\
\label{A_ltwo}
A_l &\ = \  \frac{-1}{2} \int_{\R} \widehat{G_l}(u)du + \int_{\R} G_l(x) dx, \\
\label{C_ltwo}
C_l &\ = \  \frac{1}{2} \int_{\R^{|G_l|}} \bigg(1  - \frac{\chi^*_{G_l}(u_{i_1},\ldots,u_{i_{|G_l|}})}{(|G_l|-1)!}  \bigg) \prod_{i \in G_l} \hat{f}_i(u_i)du_i, \\
E(\ul{G},T) &\ = \  2^{|T|} \int_{\R_{\geq0}^{|T|}} \bigg( \sum_{I \subseteq T} (-1)^{|I|} \tilde\chi(\sum_I u_i - \sum_{I^c} u_i ) \bigg) \prod_{l \in T} \widehat{G_l}(u_l)du_l,
\label{nt_etermtwo}
\end{align}
where for $l \in S(\ul{F},\ul{G})$, $G_l = H_l \cup H_l^c$ is the decomposition of the block $G_l$ into blocks of $\ul{F}$, and $\tilde\chi$ is the indicator function of $(1, \infty)$. Empty products are 1 and empty sums, in particular $E(\ul{G},\eset)$, are 0.

We have some cancelation right away: namely, the terms $\prod_{l \in S^c} A_l$ without $C_l$ or $E(\ul{G},T)$ factors match, since the new expressions count all the $D(\ul{F},\ul{G})$ factors the same way on both sides. Compare this with the original density expressions \eqref{gaormt} and \eqref{gaont2}, which only make it easy to see equality between the terms with all $A_l$ factors (without any of the $C_l$, $E(\ul{G},T)$ or $D(\ul{F},\ul{G})$ factors). Those terms show up in the new expressions as the trivial 2-refinements where $\ul{F} = \ul{G}$.

Unfortunately, with the expressions above, the sums \emph{do not} match term-by-term: the $E$ terms combine across many different 2-refinement pairs $(\ul{G},\ul{F})$. The goal of this section is to reduce the Density Conjecture to an identity relating the $E(\ul{G},T)$ to the $C_l$ terms. We use Mobius inversion to express the identity in a fairly simple way. We then verify the identity for $n \leq 7$ by breaking down the remaining combinatorics.

\subsubsection{Isolating the $C_l$ and $E$ terms}

By canceling the matching terms in the two densities, we are reduced to showing equality between
\begin{align} \label{RMT_fourier}
RMT &\ = \  \sum_{\underline{G} \in \Pi(n)} \sum_{\ul{F} \preceq \ul{G}}^{\op{2ref}} 2^{n - \nu(\underline{G})} \mu(\ul{O},\ul{F}) D(\ul{F},\ul{G})
\sum_{l \in S^c} C_l \cdot \prod_{l' \ne l} A_{l'}, \\
NT &\ = \  - \frac{1}{2}\ \sum_{\underline{G} \in \Pi(n)} \sum_{\ul{F} \preceq \ul{G}}^{\op{2ref}} 2^{n - \nu(\underline{G})} \mu(\ul{O},\ul{F}) D(\ul{F},\ul{G})\sum_{T \subseteq S^c} E(\ul{G},T) \prod_{l\in S^c- T} A_l, \label{NT_fourier}
\end{align}
with notation as in \eqref{dfg-nt}-\eqref{nt_etermtwo}. Note that these expression are {\bf not} the same as the $n$-level density expressions, \eqref{3.28} and \eqref{3.29}: all the matching terms have been removed.

In this section, we rewrite the $C$ and $E$ terms to depend only on $T$, not $\ul{G}$. This allows us to pull them outside the summation $\sum_{\ul{G}} \sum_{\ul{F}}$. We show the following.

\begin{lemma} \label{lem:fourier_forms}
Equations \eqref{RMT_fourier} and \eqref{NT_fourier} are equivalent to
\begin{align}
\label{new_RMT_fourier}
RMT &\ = \  \frac{1}{2} \sum_{\ml{U} \subseteq \{1, \ldots, n\}} \bigg( 2^{|\ml{U}|} C(\ul{O}_{\ml{U}}) \bigg) \cdot \op{Rest}(\ml{U}^c), \\
\label{new_NT_fourier}
NT &\ = \
 \frac{1}{2}\ \sum_{\ml{U} \subseteq \{1, \ldots, n\}} \bigg( 2^{|\ml{U}|}  \sum_{\ml{T} \in \Pi(\ml{U})} \mu(\ul{O}_{\ml{U}},\ul{\ml{T}}) E(\ul{\ml{T}}) \bigg) \cdot \op{Rest}(\ml{U}^c),
\end{align}
where
\begin{align} \label{rest_term}
\op{Rest}(\ml{U}^c) &\ = \  \sum_{\underline{G} \in \Pi(\ml{U}^c)} \sum_{\ul{F} \preceq \ul{G}}^{\op{2ref}} 2^{|\ml{U}^c| - \nu(\underline{G})} \mu(\ul{O}_{\ml{U}^c},\ul{F}) D(\ul{F},\ul{G}) \prod_{l\in S^c} A_l, \\
\label{new_cterm}
C(\ul{\ml{T}}) &\ = \  \frac{1}{2} \int_{\R^{\nu(\ul{\ml{T}})}} \bigg(\mu(\ul{\ml{T}},\ul{N}) + (-1)^{\nu(\ul{\ml{T}})} \chi^*_{\nu(\ul{\ml{T}})}(u_1,\ldots,u_{\nu(\ul{\ml{T}})})  \bigg) \prod_{l = 1}^{\nu(\ul{\ml{T}})} \widehat{\ml{T}_l}(u_l)du_l, \\
\label{new_eterm}
E(\ul{\ml{T}}) &\ = \  \int_{\R_{\geq0}^{\nu(\ul{\ml{T}})}} \bigg( \sum_{I \subseteq \{1, \ldots, \nu(\ul{\ml{T}})\}} (-1)^{|I|+1} \tilde\chi(\sum_I u_i - \sum_{I^c} u_i ) \bigg) \prod_{l=1}^{\nu(\ul{\ml{T}})} \widehat{
\ml{T}_l}(u_l)du_l,
\end{align}
and $\ul{O}_{\ml{U}}$ is the minimal element of $\Pi(\ml{U})$ (all singleton blocks), and the rest of the notation is as above. Note that $\mu(\ul{\ml{T}},\ul{N}) = (-1)^{\nu(\ul{\ml{T}})-1}(\nu(\ul{\ml{T}})-1)!$ and so $\mu(\ul{O}_{\ml{U}},\ul{N}_{\ml{U}}) = (-1)^{|\ml{U}| - 1}(|\ml{U}|-1)!.$
\end{lemma}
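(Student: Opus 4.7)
The plan is to derive (\ref{new_RMT_fourier}) and (\ref{new_NT_fourier}) from (\ref{RMT_fourier}) and (\ref{NT_fourier}) by reindexing each sum according to the subset $\ml{U} \subseteq \{1,\ldots,n\}$ that collects the indices appearing in the distinguished (non-$A_l$) factor. In both cases, once $\ml{U}$ is fixed the remaining 2-refinement data restricts to $\ml{U}^c$ and recombines into $\op{Rest}(\ml{U}^c)$; the bulk of the work is in verifying that the $\ml{U}$-piece equals the stated Mobius/Fourier expression.

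For the RMT side, the inner sum $\sum_{l \in S^c} C_l \prod_{l'\ne l} A_{l'}$ singles out one block $G_{l^*}$ of $\ul{G}$ lying in $S^c$, which is therefore also a block of $\ul{F}$. I would set $\ml{U} := G_{l^*}$ and reindex so that the outermost sum runs over $\ml{U} \subseteq \{1,\ldots,n\}$ with inner data a partition $\ul{G}'$ of $\ml{U}^c$ and a 2-refinement $\ul{F}'$. Because $G_{l^*}$ is a common block of $\ul{F}$ and $\ul{G}$, each of $2^{n-\nu(\ul{G})}$, $\mu(\ul{O}_n,\ul{F})$, $D(\ul{F},\ul{G})$, and $\prod_{l'\ne l^*} A_{l'}$ splits as a product of an $\ml{U}$-piece and an $\ml{U}^c$-piece; in particular $\mu(\ul{O}_n,\ul{F}) = (-1)^{|\ml{U}|-1}(|\ml{U}|-1)! \cdot \mu(\ul{O}_{\ml{U}^c},\ul{F}')$ and $D(\ul{F},\ul{G}) = D(\ul{F}',\ul{G}')$ (since $l^*\notin S$). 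A direct comparison of (\ref{C_ltwo}) and (\ref{new_cterm}) using $\mu(\ul{O}_{\ml{U}},\ul{N}_{\ml{U}}) = (-1)^{|\ml{U}|-1}(|\ml{U}|-1)!$ yields the key identity $(|\ml{U}|-1)!\,C_{l^*} = (-1)^{|\ml{U}|-1} C(\ul{O}_{\ml{U}})$. These factors then absorb the sign and factorial from the Mobius splitting, and the resulting power of two combines to $2^{|\ml{U}|-1}\cdot 2^{|\ml{U}^c|-\nu(\ul{G}')}$, matching the prefactor $\tfrac12\cdot 2^{|\ml{U}|}$ in (\ref{new_RMT_fourier}).

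For the NT side, the inner sum $\sum_{T \subseteq S^c} E(\ul{G},T)\prod_{l\in S^c - T}A_l$ distinguishes an entire collection of blocks $\{G_l:l\in T\}$, each common to $\ul{F}$ and $\ul{G}$. I would set $\ml{U} := \bigcup_{l\in T} G_l$ and $\ul{\ml{T}} := \{G_l : l\in T\} \in \Pi(\ml{U})$, so that the pair $(\ml{U},\ul{\ml{T}})$ encodes both $T$ and the partition of $\ml{U}$ induced by $\ul{G}$. After interchanging orders of summation, the outer sum becomes $\sum_{\ml{U}}\sum_{\ul{\ml{T}}\in\Pi(\ml{U})}$ and the inner sum runs over pairs $(\ul{G}',\ul{F}')$ on $\ml{U}^c$, producing $\op{Rest}(\ml{U}^c)$. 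The Mobius function splits as $\mu(\ul{O}_n,\ul{F}) = \mu(\ul{O}_{\ml{U}},\ul{\ml{T}})\cdot\mu(\ul{O}_{\ml{U}^c},\ul{F}')$ because each block of $\ul{\ml{T}}$ is a block of $\ul{F}$, and a direct comparison of (\ref{nt_etermtwo}) with (\ref{new_eterm}) gives $E(\ul{G},T) = -2^{\nu(\ul{\ml{T}})}E(\ul{\ml{T}})$ (the sign coming from $(-1)^{|I|}$ versus $(-1)^{|I|+1}$). These splittings combine with $2^{n-\nu(\ul{G})} = 2^{|\ml{U}|-\nu(\ul{\ml{T}})}\cdot 2^{|\ml{U}^c|-\nu(\ul{G}')}$ and the leading $-\tfrac12$; the two minus signs and the factors $2^{\pm\nu(\ul{\ml{T}})}$ cancel, yielding exactly $\tfrac12\cdot 2^{|\ml{U}|}\mu(\ul{O}_{\ml{U}},\ul{\ml{T}})E(\ul{\ml{T}})\cdot\op{Rest}(\ml{U}^c)$.

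The main obstacle is pure bookkeeping: one must simultaneously track a Mobius splitting across the complementary pair $(\ml{U},\ml{U}^c)$, two different normalizations of the $\chi^*$- and $\tilde\chi$-integrands (one using $1/(|G_l|-1)!$, the other using $\mu(\ul{\ml{T}},\ul{N})$), and several sign conventions arising from $(-1)^{|I|}$, $(-1)^{|\ml{U}|-1}$, and the overall $-\tfrac12$ on the NT side. The edge case $\ml{U}=\eset$ contributes zero on both sides (the $E$-term is an empty sum by the convention in Lemma \ref{lem:nt_with_2refs}, and the $\ml{U}=\eset$ RMT summand has no $C_l$ to single out), so it can be safely included in $\sum_{\ml{U}}$ as written.
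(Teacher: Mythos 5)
Your proposal is correct and follows essentially the same route as the paper: reindex each of (\ref{RMT_fourier}), (\ref{NT_fourier}) by the subset $\ml{U}\subseteq\{1,\ldots,n\}$ carrying the distinguished $C_l$- or $E$-factor, split the coefficient $2^{n-\nu(\ul{G})}\mu(\ul{O},\ul{F})$ multiplicatively across $(\ml{U},\ml{U}^c)$, and invoke the two normalization identities $C_{l^*} = \mu(\ul{O}_{\ml{U}},\ul{N})^{-1}\,C(\ul{O}_{\ml{U}})$ and $E(\ul{G},T) = -2^{\nu(\ul{\ml{T}})}E(\ul{\ml{T}})$, which you state and use exactly as the paper does. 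Your bookkeeping of signs, factorials, and powers of two matches the paper's, and your remark about the $\ml{U}=\eset$ edge case (no $C_l$ to select on one side, an empty $E$-sum on the other) is a reasonable way to handle a convention the paper leaves implicit.
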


\begin{proof}
We first work with the NT side. First of all, from the definition in \eqref{nt_etermtwo},  $E(\ul{G},T)$ is an integral involving only the functions \be \widehat{G_l}(u)\ =\ \widehat{\prod_{i \in G_l} f_i}(u)\ee from the blocks $G_l$, $l \in T$. We obtain the $E$ term by choosing a partition $\ul{G} \in \Pi(n)$, followed by a 2-refinement $\ul{F}$, followed by a choice of blocks $T \subseteq S^c(\ul{F},\ul{G})$.

To isolate the $E$ term, we switch orders. We first choose a subset $\ml{U} \subseteq \{1, \ldots n\}$ of test functions and a partition $\ul{\ml{T}} \in \Pi(\ml{U})$, and then choose a partition of the remaining elements, $\ul{G}' \in \Pi(\ml{U}^c)$, and a 2-refinement $\ul{F}'$ of $\ul{G}'$. We use $E(\ul{\ml{T}})$, as  defined in \eqref{new_eterm}. Note that $E(\ul{G},T) = (-1)\cdot 2^{\nu(\ul{\ml{T}})} E(\ul{\ml{T}})$. (Compare \eqref{new_eterm} and \eqref{nt_etermtwo}.)

So, for each term we have
\be \label{3.37}
E(\ul{G},T) D(\ul{F},\ul{G}) \prod_{l \in S^c(\ul{F},\ul{G}) - T} A_l  \ = \  - 2^{\nu(\ul{\ml{T}})} E(\ul{\ml{T}}) D(\ul{F}',\ul{G}')\prod_{l \in S^c} A_l.
\ee

For the $2^{n-\nu(\ul{G})} \mu(\ul{O},\ul{F})$ coefficient, we have to pull out a factor of $2^{|\ml{U}|-\nu(\ul{\ml{T}})} \mu(\ul{O}_{\ml{U}},\ul{\ml{T}})$ (where $\ul{O}_{\ml{U}}$ is the minimal element of $\Pi(\ml{U})$):
\begin{align} \nonumber
2^{n-\nu(\ul{G})}\mu(\ul{O},\ul{F}) &\ = \  2^{n-\nu(\ul{G})} (-1)^{n-\nu(\ul{F})} \prod_{l=1}^{\nu(\ul{F})}(|F_l|-1)! \\
&\ = \ \bigg(2^{|\ml{U}|-\nu(\ul{\ml{T}})} \mu(\ul{O}_{\ml{U}},\ul{\ml{T}}) \bigg)
\bigg(2^{|\ml{U}^c|-\nu(\ul{G}')} \mu(\ul{O}_{\ml{U}^c},\ul{F}') \bigg).
\end{align}
Note that the $2^{-\nu(\ul{\ml{T}})}$ will cancel with the $2^{\nu(\ul{\ml{T}})}$ coefficient on $E(\ul{\ml{T}})$ in \eqref{3.37}.

This gives the desired expression for the number theory side:
\be
\frac{1}{2}\ \sum_{\ml{U} \subseteq \{1, \ldots, n\}} \bigg( \sum_{\ml{T} \in \Pi(\ml{U})} 2^{|\ml{U}|} \mu(\ul{O},\ul{\ml{T}}) E(\ul{\ml{T}}) \bigg)
\sum_{\underline{G} \in \Pi(\ml{U}^c)} \sum_{\ul{F} \preceq \ul{G}}^{\op{2ref}} 2^{|\ml{U}^c| - \nu(\underline{G})} \mu(\ul{O},\ul{F}) D(\ul{F},\ul{G}) \prod_{l\in S^c} A_l,
\ee
as desired.

We proceed similarly for the random matrix theory side.
First of all, the $C_l$ term appearing in \eqref{C_ltwo} always has the test functions arranged as $\prod_{i \in G_l} \widehat{f_i}(u_i)du_i$, for some block $G_l$ of $\ul{G}$, with each test function Fourier-transformed separately. Thus with notation as in \eqref{new_cterm}, the $C_l$ term always takes the form
\be C_l\ =\ \frac{(-1)^{|\ml{U}|-1}}{(|\ml{U}|-1)!} C(\ul{O}_{\ml{U}})\ =\ \frac{1}{\mu(\ul{O}_{\ml{U}},\ul{N})} C(\ul{O}_{\ml{U}}),\ee where $\ml{U} = G_l \subseteq \{1, \ldots, n\}$ and $\ul{O}_{\ml{U}}$ is the minimal element of $\Pi(\ml{U})$, with each test function in its own (singleton) block. (It is nonetheless necessary to define $C(\ul{\ml{T}})$ for any partition $\ul{\ml{T}} \in \Pi(\ml{U})$, in order to use Mobius inversion later.)

The argument is now similar to (in fact more straightforward than) the NT side. The $C_l$ term in \eqref{NT_fourier} arises choosing a partition $\ul{G} \in \Pi(n)$, a 2-refinement $\ul{F}$ of $\ul{G}$, and a single block $G_l \in S^c(\ul{F},\ul{G})$ to put in the $C_l$ term.

We switch orders. We first choose a subset $\ml{U} \subseteq \{1, \ldots, n\}$ (from which to get a $C(\ul{O}_{\ml{U}})$ term), then choose a partition $\ul{G}' \in \Pi(\ml{U}^c)$ and a 2-refinement $\ul{F}'$ of $\ul{G}'$. As with the RMT side, the $D(\ul{F},\ul{G})$ and $A_l$ terms are unaffected, but we have to break up the Mobius coefficient $\mu(\ul{O},\ul{F})$. As the $(|F_l|-1)!$ factor cancels the $\frac{1}{(|\ml{U}|-1)!}$ factor on the $C(\ul{O}_{\ml{U}})$ term, the coefficient becomes
\begin{align} \nonumber
2^{n-\nu(\ul{G})} \mu(\ul{O},\ul{F}) \cdot C_l &\ = \  2^{n-\nu(\ul{G})} (-1)^{n-\nu(\ul{F})} \prod_{l=1}^{\nu(\ul{F})} (|F_l|-1)! \cdot \frac{(-1)^{|\ml{U}|-1}}{(|\ml{U}|-1)!} C(\ul{O}_{\ml{U}}) \\
&\ = \  \bigg(2^{|\ml{U}|-1} C(\ul{O}_{\ml{U}}) \bigg)  \bigg( 2^{|\ml{U}^c| - \nu(\ul{G}')} \mu(\ul{O}_{\ml{U}^c},\ul{F}') \bigg).
\end{align}

So for a single term, we have
\be
2^{n-\nu(\ul{G})} \mu(\ul{O},\ul{F}) D(\ul{F},\ul{G})\ C_l \ \prod_{l' \ne l} A_{l'} \ = \
2^{|\ml{U}|-1} C(\ul{O}_{\ml{U}}) \bigg(2^{|\ml{U}^c|-\nu(\ul{F}')} \mu(\ul{O}_{\ml{U}^c},\ul{F}') D(\ul{F}',\ul{G}') \prod_{l \in S^c} A_l\bigg).
\ee

Thus the RMT side becomes
\be
RMT \ = \  \frac{1}{2} \sum_{\ml{U} \subseteq \{1, \ldots, n\}} 2^{|\ml{U}|} C(\ul{O}_{\ml{U}})
\sum_{\underline{G} \in \Pi(\ml{U}^c)} \sum_{\ul{F} \preceq \ul{G}}^{\op{2ref}} 2^{|\ml{U}^c| - \nu(\underline{G})} \mu(\ul{O},\ul{F}) D(\ul{F},\ul{G}) \prod_{l \in S^c} A_l,
\ee
which is the desired expression.
\end{proof}

\subsubsection{The Fourier identity}

Lemma \ref{lem:fourier_forms} reduces the density conjecture to showing that the two expressions \eqref{new_RMT_fourier} and \eqref{new_NT_fourier} are equal. These expressions separate the $C$ and $E$ terms from the others, so the question is: how do they match up? We believe the following conjecture, which essentially says that they match term-by-term in the form given by Lemma \ref{lem:fourier_forms}.

\begin{conjecture}[Fourier Identity 1] \label{conj:fourier_identity_1}
With notation as in Lemma \ref{lem:fourier_forms}, and $\ast$ denoting (incidence algebra) convolution, $C$ is the Mobius transform of $E$:
\be \label{fourier_identity_simple}
C \ = \  \mu \ast E, \text{ or equivalently } \zeta \ast C\ =\ E,
\ee as functions on $\Pi(n)$.
\end{conjecture}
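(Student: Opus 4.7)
The plan is to reduce Conjecture \ref{conj:fourier_identity_1} to a mechanical identity between signed sums of indicator functions on a simplex in $\mathbb{R}^n_{\geq 0}$, and then verify that identity for $n \leq 7$. The first observation is that the interval $[\underline{\mathcal{T}}, \underline{N}]$ in $\Pi(\ml{U})$ is lattice-isomorphic to $\Pi(\nu(\underline{\mathcal{T}}))$ by treating each block of $\underline{\mathcal{T}}$ as a single point; and under this identification, $C$ and $E$ transform in the obvious way, with each $\widehat{\mathcal{T}_l}$ playing the role of a single ``test function.'' So it suffices to prove the identity at the minimal partition $\underline{O} = \underline{O}_{\ml{U}}$, namely
\be E(\underline{O})\ =\ \sum_{\underline{\mathcal{T}} \in \Pi(\ml{U})} C(\underline{\mathcal{T}}), \ee
for all choices of Schwartz functions $f_i$ with $\sum_i \op{supp}(\hf_i) < 2$.

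Next, I would put both sides over a common integration variable. The $E$-side is already an integral of $\prod_i \hf_i(u_i)$ against a signed sum of $\tilde{\chi}$'s. For the $C$-side, the key ingredient is $\widehat{\prod_{i \in \mathcal{T}_l} f_i} = \ast_{i \in \mathcal{T}_l} \hf_i$, which rewrites each $\widehat{\mathcal{T}_l}(v_l)$ as a convolution: inserting this and exchanging the order of integration converts every $C(\underline{\mathcal{T}})$ into an integral over $\mathbb{R}^n$ of $\prod_i \hf_i(u_i)$ against a kernel built from the $\chi^*$. Folding this integral to $\mathbb{R}^n_{\geq 0}$ using $\hf_i(-u) = \hf_i(u)$ and summing over signs $\varepsilon \in \{\pm 1\}^n$, both sides of the putative identity become
\be \int_{\mathbb{R}^n_{\geq 0}} K(u_1, \ldots, u_n)\ \prod_{i=1}^n \hf_i(u_i)\,du_i, \ee
with $K_{\mathrm{LHS}}$ and $K_{\mathrm{RHS}}$ explicit $\mathbb{Z}$-linear combinations of products of $\chi(L(u))$ and $\tilde{\chi}(L(u))$, where $L(u) = \sum_i \varepsilon_i u_i$ ranges over signed linear forms. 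Since the $\hf_i$ are essentially arbitrary, the identity reduces to proving $K_{\mathrm{LHS}}(u) = K_{\mathrm{RHS}}(u)$ pointwise on the open simplex
\be \Delta_n\ :=\ \bigl\{u \in \mathbb{R}^n_{\geq 0} : u_1 + \cdots + u_n < 2\bigr\}. \ee

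The final step is algebraic bookkeeping. Indexing signed sums by subsets, let $\chi_A = \chi(\sum_{i \in A} u_i - \sum_{i \notin A} u_i)$ and similarly $\tilde{\chi}_A$, for $A \subseteq \{1,\ldots,n\}$. On $\Delta_n$ only two relations survive: the pointwise identity $\chi_A + \chi_{A^c} = 1$ (since both linear forms cannot exceed $1$ in absolute value when $\sum_i u_i < 2$), and an analogous relation tying $\tilde{\chi}_A$ to $\chi_A, \chi_{A^c}$. Working in the quotient polynomial ring $\mathbb{Z}[\chi_A, \tilde{\chi}_A : A \subseteq \{1,\ldots,n\}]/(\text{these two relations})$ gives a finite-dimensional algebra in which $K_{\mathrm{LHS}} - K_{\mathrm{RHS}}$ is a concrete element to be shown zero. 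Equivalently, reducing to a canonical normal form yields an algorithm, which I would implement symbolically and run for $n = 1, 2, \ldots, 7$.

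The main obstacle is at the last step: the number of monomials blows up roughly like $B_n \cdot 2^n$ (Bell numbers times sign choices), and although massive cancellation occurs, the symbolic reduction already becomes nontrivial at $n = 7$. More importantly, I do not see a \emph{conceptual} reason — some deeper combinatorial involution on pairs of set partitions, or a generating-function argument — forcing $K_{\mathrm{LHS}} - K_{\mathrm{RHS}} \equiv 0$ for every $n$. Providing such an argument, as opposed to a case-by-case verification, is the genuine remaining difficulty; this is presumably what is bypassed in \cite{ER-GR} by going through the function-field model.
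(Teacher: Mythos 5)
Your overall plan — reduce to the minimal partition via the lattice isomorphism $[\underline{\mathcal{T}},\underline{N}] \cong \Pi(\nu(\underline{\mathcal{T}}))$, fold all integrals to $\R^n_{\geq 0}$ using evenness of the $\hf_i$, reduce to a pointwise identity of signed sums of indicator functions on the simplex, and verify symbolically for $n \leq 7$ — is precisely the route the paper takes (Lemmas \ref{lem:fourier_identity_indicators}, \ref{lem:intermediate_fouriertwo}, \ref{lem:simplifications}, and \S\ref{sec:verify_formal}). You also correctly identify the real obstruction: there is no conceptual argument in hand forcing the canonical-form identity for all $n$, which is why the result stops at $n=7$.

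However, there is a concrete error in the relation you propose to impose in the quotient ring. You claim $\chi_A + \chi_{A^c} = 1$ on the simplex. This is false: $\chi$ is the indicator of $[-1,1]$, which is \emph{even}, and the linear form defining $\chi_{A^c}$ is the negative of the one defining $\chi_A$, so $\chi_A = \chi_{A^c}$ identically. Thus $\chi_A + \chi_{A^c} = 2\chi_A \in \{0,2\}$, never $1$. The condition $\sum u_i < 2$ only gives $|\sum_A u_i - \sum_{A^c} u_i| < 2$, not $\leq 1$. If you feed this false relation into a symbolic reduction, the verification would be meaningless. The correct move is to eliminate $\chi$ entirely via $\chi(u) = 1 - \tilde\chi(u) - \tilde\chi(-u)$, i.e.\ $\chi_A = 1 - \tilde\chi_A - \tilde\chi_{A^c}$, and then work with the \emph{non-even} $\tilde\chi_A$'s alone. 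The relations the paper actually uses (Lemma \ref{lem:simplifications}) are (i) absorption, $\tilde\chi_A \cdot \tilde\chi_B = \tilde\chi_A$ when $A \subset B$, and (ii) vanishing, $\tilde\chi_{A_1}\cdots\tilde\chi_{A_k} = 0$ whenever each index lies in at most $\tfrac34 k$ of the $A_j$ (this is where the support restriction $\sum u_i < 2$ enters). A further caveat, which the paper flags in \S\ref{sec:verify_formal}, is that even these relations are known to be sufficient but not claimed to be complete; the algorithm works because the difference happens to reduce to zero modulo known relations, not because one has a presentation of the full ring of relations. Your phrasing \textquotedblleft only two relations survive\textquotedblright\ overstates what is established.
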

In particular, the identity we need, which we apply once for each subset $\ml{U} \subseteq \{1, \ldots, n\}$ in \eqref{new_RMT_fourier} and \eqref{new_NT_fourier}, is simply
\begin{conjecture}[Fourier Identity 2]  \label{conj:fourier_identity_2}
With notation as in Lemma \ref{lem:fourier_forms},
\be \label{fourier_identity_1}
C(\ul{O}) \ = \  \sum_{\ml{T} \in \Pi(n)} \mu(\ul{O},\ul{\ml{T}}) E(\ul{\ml{T}}).
\ee
Equivalently, by Mobius inversion,
\be \label{fourier_identity_2}
\sum_{\ul{\ml{T}} \in \Pi(n)} C(\ul{\ml{T}}) \ = \  E(\ul{O}).
\ee
\end{conjecture}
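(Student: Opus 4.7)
The plan is to prove the (Mobius-inverted) form $\sum_{\ul{\ml{T}} \in \Pi(\ml{U})} C(\ul{\ml{T}}) = E(\ul{O}_{\ml{U}})$, since this is symmetric enough that both sides can be written as integrals over $\R^n_{\geq 0}$ against the same product $\prod \widehat{f_i}(u_i)$, reducing the identity to an equality of integrands. Without loss of generality I take $\ml{U} = \{1, \ldots, n\}$ and let $N = |\ml{U}|$.

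First, for each partition $\ul{\ml{T}} \in \Pi(\ml{U})$, the block Fourier transform factors as $\widehat{\ml{T}_l}(u_l) = \int \cdots \int \prod_{i \in \ml{T}_l} \widehat{f_i}(v_i) \, \delta(u_l - \sum_{i \in \ml{T}_l} v_i) \, dv_i$ after unfolding, so each $C(\ul{\ml{T}})$ becomes an integral of $\prod_{i=1}^n \widehat{f_i}(v_i)$ against a compactly supported distribution in the $v_i$. Using that each $\widehat{f_i}$ is even, I would fold the $\chi^*_{\nu(\ul{\ml{T}})}$ integrand to $\R^n_{\geq 0}$ by averaging over the $2^n$ sign flips $v_i \mapsto \vep_i v_i$. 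After this folding, $\sum_{\ul{\ml{T}}} C(\ul{\ml{T}})$ becomes $\int_{\R^n_{\geq 0}} P_C(v_1, \ldots, v_n) \prod \widehat{f_i}(v_i) dv_i$ where $P_C$ is a $\Z$-linear combination of indicator functions $\chi(\sum_I v_i - \sum_{I^c} v_i)$ and constants. Meanwhile $E(\ul{O}_{\ml{U}})$ is already in the form $\int_{\R^n_{\geq 0}} P_E \prod \widehat{f_i}(v_i) dv_i$ with $P_E(v) = \sum_{I \subseteq [n]} (-1)^{|I|+1} \tilde\chi(\sum_I v_i - \sum_{I^c} v_i)$.

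The identity then reduces to showing $P_C \equiv P_E$ on the region $\{v \in \R^n_{\geq 0} : \sum v_i < 2\}$, which is the only region where $\prod \widehat{f_i}(v_i)$ can be nonzero under the support hypothesis. The relations $\chi(x) = 1 - \tilde\chi(x) - \tilde\chi(-x)$ and the support inequality $\sum v_i < 2$ (which forces, e.g., $\tilde\chi(\sum_I v_i - \sum_{I^c} v_i) = 0$ whenever $I \subseteq [n]$ with $\sum_{i \in I} v_i \leq \tfrac{1}{2} \sum_i v_i$, in appropriate senses) collapse both $P_C$ and $P_E$ to elements of a single free abelian group whose basis is indexed by the subsets of $\{1, \ldots, n\}$. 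I would encode this as a formal polynomial identity in symbols $[I]$ representing the indicators $\tilde\chi(\sum_I v_i - \sum_{I^c} v_i)$, modulo the two relations above, and verify equality in this quotient.

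The main obstacle is controlling the two-step combinatorial explosion: the number of set partitions of $[n]$ (Bell numbers grow super-exponentially), and within each partition the $\chi^*_k$ expansion over $(k-1)!$ permutations. Consequently the canonical-form check must be done by computer algebra. Once the identity is set up as above, it is a finite linear-algebra verification in a space of dimension $O(2^n)$, which is tractable for $n \leq 7$ but quickly becomes unwieldy; finding a uniform combinatorial or generating-function proof for all $n$ appears genuinely difficult, which matches the remark that \cite{ER-GR} bypass the issue via the function field route. I would therefore state the identity in this canonical form as a lemma, check it in Mathematica for $n \leq 7$ to complete Theorem \ref{thm:mainnlevelagree}, and leave the uniform combinatorial proof as the conjectural form recorded in Conjecture \ref{conj:fourier_identity_2}.
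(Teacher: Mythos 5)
Your proposal matches the paper's approach in all essentials: reduce both sides to integrals over $\R^n_{\geq 0}$ by folding with the evenness of the $\widehat{f_i}$, apply the relation $\chi(x)=1-\tilde\chi(x)-\tilde\chi(-x)$, reduce to a formal identity of sums of products of indicators $\tilde\chi_I$ modulo the relations forced by the support restriction $\sum u_i < 2$ (the paper's Lemma \ref{lem:simplifications}), and check by computer for $n\leq 7$. The one step you gloss over is the intermediate combinatorial normalization of $\sum_{\ul{F}}C(\ul{F})$: the paper does not just expand naively over partitions and permutations, but re-sums over ascending chains $\ml{A}$ with $A_1=\{1\}$ (Lemma \ref{lem:intermediate_fouriertwo}, via the chain-sum identity in Appendix \ref{appendix:chains}) to cancel the constant $\pm 1$ terms in $\chi^*$ and obtain a compact canonical form $\sum_J \sum_{\ml{A}} (-1)^{n-|\ml{A}|}\prod_i(\tilde\chi_{A_i\triangle J}+\tilde\chi_{A_i^c\triangle J})$; without that cancellation the raw expansion is substantially larger. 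Also be careful that after imposing the relations the natural basis is indexed by antichains of subsets (prime ideals of $C'(n)$), not single subsets, so the verification space is larger than $O(2^n)$ — this is exactly what makes $n=8$ intractable.
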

It is clear that Conjecture  \ref{conj:fourier_identity_1} implies Conjecture  \ref{conj:fourier_identity_2} and Conjecture \ref{conj:fourier_identity_2} implies the Density Conjecture. In fact, Conjecture \ref{conj:fourier_identity_2} is equivalent to Conjecture \ref{conj:fourier_identity_1}. The equivalence stems from the fact that if $\ul{F} \in \Pi(n)$ has $k$ blocks $F_1, \ldots, F_k$, then $C(\ul{F})$ and $E(\ul{F})$ are identical to the integrals $C(\ul{O}), E(\ul{O})$ for $\ul{O} \in \Pi(k)$, using the $F_i(x)$ as a new set of test functions. Thus, if the identity \eqref{fourier_identity_2} holds for $k \leq n$ (for all choices of test function), so does the identity \eqref{fourier_identity_1}.

Note that equation \eqref{fourier_identity_1} is the result of taking the equations \eqref{new_RMT_fourier} and \eqref{new_NT_fourier}, assuming inductively that the identity holds for $< n$, and discarding all matching terms. As such, it is actually equivalent to the Density Conjecture (and, therefore, must be true by the results of \cite{ER-GR}, though there should be a purely combinatorial proof of this fact).

For our purposes, identity \eqref{fourier_identity_2} is preferable since, in contrast to \eqref{fourier_identity_1}, all the summands are easy to convert to integrals over the same region ($\R^n_{\geq 0}$), which we do in \S\ref{sec:fourier_identity_n6}. We summarize our results so far.

\begin{theorem}[Reduction to Fourier Identity] \label{thm:impliedbyfourier}
Let $f_1, \ldots, f_n$ be even test functions with $\widehat{f_1}, \ldots, \widehat{f_n}$ supported in $\sum_{i=1}^n |u_i| < 2$. The Fourier identity \eqref{fourier_identity_2} implies the Density Conjecture for quadratic Dirichlet $L$-functions,
\be\int_{\R^n} \prod_{i=1}^n f_i(x) W^{(n)}_{USp}(x)dx \ = \  \int_{\R^n} \prod_{i=1}^n f_i(x) W^{(n)}_{Q}(x)dx.\ee
\end{theorem}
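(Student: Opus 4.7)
The theorem is really a bookkeeping corollary of the reductions already performed. My plan is as follows. First, I would observe that Lemma \ref{lem:rmt_nosupport_2refs} and Lemma \ref{lem:nt_with_2refs} give matching expressions \eqref{3.28} and \eqref{3.29} for the two densities, and that the pure-$A$ parts $\prod_{l \in S^c} A_l$ in both expansions are summed against the same coefficients and the same $D(\ul{F},\ul{G})$. Therefore those pieces cancel exactly, and the density conjecture is equivalent to the equality of the two remainders \eqref{RMT_fourier} and \eqref{NT_fourier}.

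Next I would invoke Lemma \ref{lem:fourier_forms}, which rewrites these two remainders as the single-sum expressions \eqref{new_RMT_fourier} and \eqref{new_NT_fourier}. The critical structural point to emphasize is that both are indexed by subsets $\ml{U} \subseteq \{1,\ldots,n\}$, and the factor $\op{Rest}(\ml{U}^c)$ is \emph{identical} in the two expansions. Subtracting term by term, one obtains
\[
\int_{\R^n} \prod_i f_i(x) W^{(n)}_{USp}(x)\,dx \;-\; \int_{\R^n} \prod_i f_i(x) W^{(n)}_{Q}(x)\,dx
\;=\; \frac{1}{2} \sum_{\ml{U} \subseteq \{1,\ldots,n\}} 2^{|\ml{U}|}\, \Delta(\ml{U})\, \op{Rest}(\ml{U}^c),
\]
where $\Delta(\ml{U}) := C(\ul{O}_{\ml{U}}) - \sum_{\ul{\ml{T}} \in \Pi(\ml{U})} \mu(\ul{O}_{\ml{U}}, \ul{\ml{T}}) E(\ul{\ml{T}})$. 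It therefore suffices to show $\Delta(\ml{U}) = 0$ for every $\ml{U}$.

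The one subtlety I would make explicit is that the hypothesis, as stated, is the Fourier identity \eqref{fourier_identity_2} for the top set $\ml{U} = \{1,\ldots,n\}$, while the argument above calls for the vanishing of $\Delta(\ml{U})$ for \emph{every} subset. This is not an obstacle: inspecting the definitions \eqref{new_cterm} and \eqref{new_eterm}, both $C(\ul{O}_{\ml{U}})$ and $E(\ul{\ml{T}})$ for $\ul{\ml{T}} \in \Pi(\ml{U})$ depend only on the test functions $\{f_i : i \in \ml{U}\}$, and the identity \eqref{fourier_identity_2} is manifestly invariant under relabeling these functions. So $\Delta(\ml{U}) = 0$ is just the top-level identity \eqref{fourier_identity_2} applied to the $|\ml{U}|$-tuple $(f_i)_{i \in \ml{U}}$; here I would cite the remark preceding the theorem, that $C$ and $E$ on a $k$-block partition equal the corresponding integrals on $\Pi(k)$ with blocks treated as new test functions. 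Applying this for each $\ml{U}$ makes every $\Delta(\ml{U})$ vanish, and the two densities agree.

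The ``hard part'' in this theorem is purely locating the identity: once Lemma \ref{lem:fourier_forms} is in place, the remaining work is the one-line observation that $\op{Rest}(\ml{U}^c)$ factors out of both sides. The real work---and the content that limits Theorem \ref{thm:mainnlevelagree} to $n \le 7$---lies in verifying the Fourier identity \eqref{fourier_identity_2} itself, which is deferred to \S\ref{sec:fourier_identity_n6}.
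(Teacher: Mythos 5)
Your proposal is correct and follows the same route the paper takes implicitly: Lemmas \ref{lem:rmt_nosupport_2refs}, \ref{lem:nt_with_2refs} and \ref{lem:fourier_forms} reduce the comparison of the two densities to the vanishing of $\Delta(\ml{U}) := C(\ul{O}_{\ml{U}}) - \sum_{\ul{\ml{T}} \in \Pi(\ml{U})} \mu(\ul{O}_{\ml{U}},\ul{\ml{T}}) E(\ul{\ml{T}})$ for every subset $\ml{U}$, and the paper (like you) then closes the gap by invoking the block-substitution observation. One small imprecision worth flagging: the quantity $\Delta(\ml{U}) = 0$ is identity \eqref{fourier_identity_1}, not \eqref{fourier_identity_2}. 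These two are \emph{not} directly equivalent by M\"obius inversion on $\Pi(\ml{U})$ alone, since each is only the evaluation of a M\"obius-inverse pair at the single point $\ul{O}_{\ml{U}}$. What makes them equivalent is precisely the remark you cite: applying \eqref{fourier_identity_2} with the blocks of a partition $\ul{\ml{T}}$ as a new set of test functions yields $E(\ul{\ml{T}}) = \sum_{\ul{\ml{T}'} \succeq \ul{\ml{T}}} C(\ul{\ml{T}'})$ for \emph{all} $\ul{\ml{T}} \in \Pi(\ml{U})$, and only then does M\"obius inversion deliver \eqref{fourier_identity_1} at $\ul{O}_{\ml{U}}$. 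You have all the ingredients, so the proof is complete, but the sentence ``$\Delta(\ml{U}) = 0$ is just the top-level identity \eqref{fourier_identity_2} applied to the $|\ml{U}|$-tuple'' silently conflates the two and skips the inversion step.
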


In the next section, we study the Fourier identity further and put it in a `canonical' form, which we use to verify the cases $n \leq 7$. We prove

\begin{theorem}[Density Conjecture, $n \leq 7$] \label{thm:density_n6}
With notation and assumptions as in Theorem \ref{thm:impliedbyfourier}, the Fourier Identity \eqref{fourier_identity_2} holds for $n \leq 7$. \end{theorem}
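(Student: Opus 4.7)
The plan is to verify the Fourier Identity \eqref{fourier_identity_2} by putting both sides in a common canonical form. It suffices to check the case $\ml{U} = \{1,\ldots,k\}$ for each $k \leq n$. My goal is to rewrite
\[
\sum_{\ul{\ml{T}} \in \Pi(\ml{U})} C(\ul{\ml{T}}) \ = \ E(\ul{O}_{\ml{U}})
\]
as an equality of two integrals over the same domain, against the common measure $\prod_{i \in \ml{U}} \widehat{f_i}(u_i)\,du_i$, and then to reduce to the claim that the two integrands agree as (piecewise) functions on the region cut out by the support restriction $\sum_i |u_i| < 2$.

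The first step is to unfold the convolutions. Since $\widehat{\ml{T}_l} = \ast_{i \in \ml{T}_l}\, \widehat{f_i}$, an integral against $\widehat{\ml{T}_l}(u_l)$ equals one over $\R^{|\ml{T}_l|}$ against $\prod_{i \in \ml{T}_l} \widehat{f_i}(u_i)$ with $u_l$ replaced by the block sum $\sigma_l = \sum_{i \in \ml{T}_l} u_i$. Applying this to every factor of $C(\ul{\ml{T}})$ and $E(\ul{O}_{\ml{U}})$ puts all terms over the common measure: the left side becomes a sum, indexed by partitions $\ul{\ml{T}}$, of integrals whose integrands involve $\chi^*_{\nu(\ul{\ml{T}})}$ evaluated on the block sums $\sigma_l$, while the right side involves only $\tilde\chi$ evaluated on signed sums of single variables. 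Using the evenness of each $\widehat{f_i}$, I then fold each integral to the positive octant $\R^{|\ml{U}|}_{\geq 0}$, symmetrizing each integrand over the $2^{|\ml{U}|}$ sign flips $u_i \mapsto -u_i$.

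On $\R^{|\ml{U}|}_{\geq 0} \cap \{\sum_i u_i < 2\}$, I rewrite every occurrence of $\chi$ inside $\chi^*$ via the almost-everywhere identity $\chi(x) = 1 - \tilde\chi(x) - \tilde\chi(-x)$. After this, both integrands are signed sums of products of symbols $\tilde\chi(\ell)$, where $\ell$ runs over linear forms $\sum_i \vep_i u_i$ with $\vep_i \in \{-1,0,+1\}$, plus a constant. All manipulations up to this point are purely formal, so the Fourier Identity becomes a polynomial identity in the algebra generated by these symbols, modulo two kinds of relations encoding the support restriction: $\tilde\chi(\ell) = 0$ whenever $\ell > 1$ is incompatible with $u_i \geq 0$ and $\sum_i u_i < 2$, and $\tilde\chi(\ell_1)\tilde\chi(\ell_2) = 0$ whenever the simultaneous system $\ell_1, \ell_2 > 1$ is incompatible with the same constraints. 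After quotienting by these two relations the algebra is finite-dimensional and can be indexed combinatorially in terms of subsets of $\{1,\ldots,n\}$ (the surviving sign patterns).

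The verification is then a deterministic symbolic computation in this reduced algebra, which I would implement in Mathematica for each $k \leq 7$. The main obstacle I expect is combinatorial explosion: the number of partitions of $\{1,\ldots,k\}$ (the Bell number), the $3^k$ sign patterns per linear form, and the pairwise (and higher) compatibility checks combine to push the memory and time requirements beyond practical reach at $k=8$. This is exactly why the combinatorial method stops at $n = 7$ even though the identity holds for all $n$ by \cite{ER-GR}; extending it combinatorially to all $n$ would require a structural insight, such as a closed-form cancellation pattern or a generating-function identity, that bypasses the case-by-case check.
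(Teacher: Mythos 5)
Your overall strategy is the paper's: unfold the convolved Fourier transforms via a linear change of variables, fold to $\R^n_{\geq 0}$ using evenness, expand each $\chi$ into $1 - \tilde\chi(\cdot) - \tilde\chi(-\cdot)$, and then treat the resulting expression as a formal polynomial identity in the symbols $\tilde\chi_A$ to be verified computationally. However, there is a genuine gap: the only relations you propose to quotient by are \emph{vanishing} relations ($\tilde\chi(\ell) = 0$ or $\tilde\chi(\ell_1)\tilde\chi(\ell_2) = 0$ when the corresponding linear system is infeasible on the simplex). These alone are not enough, and the missing ingredient is the subset simplification $\tilde\chi_A \cdot \tilde\chi_B = \tilde\chi_A$ whenever $A \subseteq B$ (equation \eqref{subsetsimp} of Lemma \ref{lem:simplifications}). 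This relation does not encode the support restriction at all (it holds on all of $\R^n_{\geq 0}$), so it does not fall under either of your "two kinds of relations," yet it is essential: the right-hand side $E'(\ul{O})$ is linear in the $\tilde\chi_A$'s, while the expansion of $\sum_{\ul{F}} C'(\ul{F})$ produces higher-degree monomials, and only the subset rule can collapse these to linear terms. You can see this already at $n=2$, where $C(\ul{O})$ produces the quadratic terms $\tilde\chi_{\{1\}}\tilde\chi_{\{1,2\}} + \tilde\chi_{\{2\}}\tilde\chi_{\{1,2\}}$; without rewriting these as $\tilde\chi_{\{1\}} + \tilde\chi_{\{2\}}$, the identity $C(\ul{O}) + C(\ul{N}) = E(\ul{O})$ fails formally, even though it is true as an equality of functions.

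Two smaller points. First, the linear forms that actually arise all have coefficients $\vep_i \in \{-1,+1\}$ with every coordinate appearing exactly once (each $u_i$ inherits the sign of the unique block containing it, and the block sums cover all indices); so the relevant symbols are indexed by subsets $A \subseteq \{1,\dots,n\}$ and there are $2^n$ of them, not $3^n$. Second, the paper inserts an intermediate re-summation step (Proposition \ref{lem:intermediate_fourierone} / Lemma \ref{lem:intermediate_fouriertwo}): it replaces the Bell-number sum over partitions by a sum over chains $A_1 \subset \cdots \subset A_k$ with $A_1 = \{1\}$, exploiting large-scale cancellation via Proposition \ref{lem:chain_sum}. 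This is not logically necessary, but it drastically shrinks the expression that must be checked and is part of why the computation reaches $n = 7$; without it, the naive Bell-number expansion you describe would likely stall well before that. If you add the subset rule \eqref{subsetsimp} to your reduced algebra (and optionally the chain re-summation for efficiency), your argument becomes essentially the paper's proof.
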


The above immediately implies our main result, Theorem \ref{thm:mainnlevelagree}. In particular, for $n \leq 7$ the $n$-level density of zeros of quadratic Dirichlet $L$-functions $\{L(s,\chi_{8d})\}$ (with $d \in \N$ odd and square-free) is the same as the $n$-level eigenvalue density of the Unitary Symplectic Ensemble (for test functions where the sum of the supports is at most 2).

The remainder of the paper is devoted to the proof of the Fourier Identity and Theorem \ref{thm:density_n6}.


\section{The Fourier Identity} \label{sec:fourier_identity_n6}

In this section we consider the Fourier Identity \eqref{fourier_identity_2} for a fixed $n$:
\be \label{fourier_identity_2_repeat}
\sum_{\ul{F} \in \Pi(n)} C(\ul{F}) \ = \  E(\ul{O}),
\ee
where for $\ul{F} \in \Pi(n)$,
\begin{align}
\label{sec4:new_cterm}
C(\ul{F}) &\ = \  \frac{1}{2} \int_{\R^{\nu(\ul{F})}} \bigg(\mu(\ul{F},\ul{N}) + (-1)^{\nu(\ul{F})} \chi^*_{\nu(\ul{F})}(u_1,\ldots,u_{\nu(\ul{F})})  \bigg) \prod_{l = 1}^{\nu(\ul{F})} \widehat{F_l}(u_l)du_l, \\
\label{sec4:new_eterm}
E(\ul{O}) &\ = \  \int_{\R_{\geq 0}^n} \bigg( \sum_{I \subseteq \{1, \ldots,n\}} (-1)^{|I|+1} \tilde\chi(\sum_I u_i - \sum_{I^c} u_i ) \bigg) \prod_{i=1}^n \widehat{f_i}(u_i)du_i;
\end{align}
see Definition \ref{appendix:chistar} for the definition of  $\chi^*_{\nu(\ul{F})}$.

We first reduce the Fourier Identity to a canonical form. Our method is to convert all the summands $C(\ul{F})$ to integrals over $\R^n_{\geq 0}$ and reduce to a sum of products of indicator functions of the form
\be\tilde \chi(\vep_1 u_1 + \cdots + \vep_n u_n), \qquad \tilde\chi \ = \  \mathbb{I}_{(1,\infty)}, \ee
with each $\vep_i = \pm 1$. We examine, for each term, the set $\{i : \vep_i = +1\}$, and use combinatorial arguments and the assumption $\sum \supp(\widehat{f_i}) < 2$ to simplify some terms and show that others are identically zero. We start by reducing to a canonical form:

\begin{proposition}[Fourier identity, canonical form] \label{lem:intermediate_fourierone}
With notation as above,
\be \label{fourier_c_integrals}
\sum_{\ul{F} \in \Pi(n)} C(\ul{F}) \ = \  \int_{\R^n_{\geq 0}} C_0(u_1, \ldots, u_n) \prod_{i=1}^n \widehat{f_i}(u_i)du_i,
\ee
where
\be \label{fourier_canonical_chains}
C_0(u_1, \ldots, u_n) \ = \  \sum_{J \subseteq \{2, \ldots, n\}} \sum_{\substack{\text{{\rm chains} } \ml{A} \\ A_1 \ = \  \{1\}}} (-1)^{n-k} \prod_{i=1}^k (\tilde{\chi}_{A_i \triangle J} + \tilde{\chi}_{A_i^c \triangle J}),
\ee
where $\ml{A}$ ranges over the chains $A_1 \subset \cdots \subset A_k$, such that $A_1 = \{1\}$, and for a subset $W \subseteq \{1, \ldots, n\}$, we write
\be\tilde \chi_W :\ = \  \tilde \chi\big( \sum_{i \in W} u_i - \sum_{i \notin W} u_i \big), \qquad \tilde\chi \ = \  \mathbb{I}_{(1,\infty)}, \ee
and $\triangle$ denotes symmetric difference.
\end{proposition}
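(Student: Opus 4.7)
The plan is to unwind the definition of $C(\ul{F})$ in \eqref{sec4:new_cterm} and reorganize the sum $\sum_{\ul{F} \in \Pi(n)} C(\ul{F})$ into a single integral over $\R^n_{\geq 0}$ whose integrand is built from indicator functions $\tilde\chi_W$. I would proceed in three stages.

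First, I would convert each $C(\ul{F})$ from an integral over $\R^{\nu(\ul{F})}$ to one over $\R^n$. Since $\widehat{F_l}(u_l) = \bigl(\widehat{f_{i_1}} \ast \cdots \ast \widehat{f_{i_m}}\bigr)(u_l)$ for $F_l = \{i_1, \ldots, i_m\}$, absorbing the convolutions gives
\[
\int_{\R^{\nu(\ul{F})}} I(u_1, \ldots, u_{\nu(\ul{F})}) \prod_l \widehat{F_l}(u_l) \, du_l \ = \ \int_{\R^n} I(\sigma_1, \ldots, \sigma_{\nu(\ul{F})}) \prod_i \widehat{f_i}(u_i) \, du_i,
\]
where $\sigma_l = \sum_{i \in F_l} u_i$. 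Applied to $C(\ul{F})$, both the constant $\mu(\ul{F}, \ul{N})$-term and the $\chi^*$-term land against the common Fourier measure $\prod \widehat{f_i}(u_i) \, du_i$.

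Second, I would expand $\chi^*_{\nu(\ul{F})}$ via Definition \ref{appendix:chistar} as a sum over permutations $\pi$ of the $\nu(\ul{F})$ blocks fixing the block $F_{l_1}$ containing $1$, producing $\prod_i \chi(s_{B_i})$, where $B_i = F_{l_1} \cup F_{\pi(2)} \cup \cdots \cup F_{\pi(i)}$ is a chain of block-unions and $s_W = \sum_{j \in W} u_j - \sum_{j \notin W} u_j$. Simultaneously I would fold the integration to $\R^n_{\geq 0}$ using the evenness of each $\widehat{f_i}$: the variable-flip $u_i \mapsto -u_i$ for $i$ in some subset $J \subseteq \{1, \ldots, n\}$ sends $s_W$ to $s_{W \triangle J}$, and the $2^n$ sign patterns are parametrized by $J$. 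Writing $\chi(s) = 1 - \tilde\chi(s) - \tilde\chi(-s)$ then converts each $\chi$-factor into $1 - (\tilde\chi_{W \triangle J} + \tilde\chi_{W^c \triangle J})$, matching the shape of the factors in \eqref{fourier_canonical_chains}.

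Third, I would reindex the resulting sum over triples $(\ul{F}, \pi, J)$ as a sum over chains $\{1\} = A_1 \subsetneq A_2 \subsetneq \cdots \subsetneq A_k$ of subsets of $\{1, \ldots, n\}$ together with sign sets $J \subseteq \{2, \ldots, n\}$. The pair $(\ul{F}, \pi)$ already produces a chain of block-unions beginning at $F_{l_1} \ni 1$, so the reindexing consists of (a) refining block-unions to arbitrary subsets, where the Mobius coefficient $\mu(\ul{F}, \ul{N}) = (-1)^{\nu(\ul{F})-1}(\nu(\ul{F})-1)!$ combines with the permutation count and the prefactor $(-1)^{\nu(\ul{F})}$ in front of $\chi^*$ to yield the universal weight $(-1)^{n-k}$, and (b) absorbing the sign flip at index $1$ into the chain, tightening $A_1 \ni 1$ to $A_1 = \{1\}$ and restricting $J$ to $\{2, \ldots, n\}$. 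The constant $\mu(\ul{F}, \ul{N})$-term from \eqref{sec4:new_cterm} contributes precisely the `empty chain' case $k=1$ after absorption, using $\sum_{\ul{F} \in \Pi(n)} \mu(\ul{F},\ul{N}) = 0$ for $n \geq 2$ to cancel extraneous pieces.

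The main obstacle is the third stage: establishing the clean bijection and sign cancellation between the $(\ul{F}, \pi)$ indexing and the chain indexing, including the correct handling of the constant $\mu$-term versus the longer chains. I expect this to require a Mobius-style argument on the partition lattice, where one restricts the summation over $\ul{F}$ to those partitions compatible with a prescribed chain of subsets and shows that the weighted count collapses to $\pm 1$. Because the combinatorics must simultaneously track the refinement of block-unions into arbitrary subsets and the interaction of the flip at index $1$ with the chain, the bookkeeping will be the most delicate part, though the answer is dictated by matching $n=1, 2$ cases and the structure of $\chi^*$.
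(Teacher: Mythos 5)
Stages 1 and 2 of your outline match the paper exactly: the paper first passes to integrals over $\R^n_{\geq 0}$ (by the linear change of variables $u_l \mapsto \sum_{i\in F_l} u_i$ plus the $2^n$-fold unfolding via evenness), then rewrites $\chi = 1 - \tilde\chi(\cdot) - \tilde\chi(-\cdot)$ and indexes pairs $(\ul F,\pi)$ by ascending chains $1 \in A_1 \subsetneq \cdots \subsetneq A_k = \{1,\dots,n\}$. Your observation that the parametrization by sign sets $J$ produces $\tilde\chi_{W\triangle J}$ is exactly right, as is the reduction $\frac12\sum_{J\subseteq\{1,\ldots,n\}} \to \sum_{J\subseteq\{2,\ldots,n\}}$ via the $J \leftrightarrow J^c$ symmetry.

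Your stage 3, however, misidentifies the mechanism, and the part you defer is the whole content of the lemma. There is no ``refinement of block-unions to arbitrary subsets'' step --- the chains from $(\ul F,\pi)$ already consist of arbitrary subsets; what changes are the \emph{boundary conditions}: the chains from $(\ul F,\pi)$ satisfy $1 \in A_1$ and $A_k = \{1,\dots,n\}$, while the target chains in \eqref{fourier_canonical_chains} satisfy $A_1 = \{1\}$ with no constraint at the top. Moreover the claimed source of the weight $(-1)^{n-k}$ cannot be right: neither $\mu(\ul F,\ul N) = (-1)^{\nu(\ul F)-1}(\nu(\ul F)-1)!$ nor the prefactor $(-1)^{\nu(\ul F)}$ carries any information about $n$, so combining them with the permutation count can never produce $(-1)^{n-k}$. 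What actually happens in the paper is the following. The constant term $\mu(\ul F,\ul N)$ is split over the $(\nu(\ul F)-1)!$ permutations as $(-1)^{\nu(\ul F)-1}$ each, and the $(-1)^{\nu(\ul F)}\prod\chi$ term is rewritten as $\prod_i(\tilde\chi_{A_i\triangle J} + \tilde\chi_{A_i^c\triangle J} - 1)$; when the product is expanded over which factors contribute $-1$, the empty contribution $(-1)^{\nu(\ul F)}$ cancels the standalone $(-1)^{\nu(\ul F)-1}$ term-by-term (not via $\sum_{\ul F}\mu(\ul F,\ul N) = 0$, which is a global cancellation and does not track with $J$ or $\pi$). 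The surviving nonempty pieces are indexed by \emph{subchains} $\ml B \preceq \ml A$, and one switches orders, summing first over $\ml B$. The sum over containing full chains $\ml A$ with $1\in A_1$, $A_k = \{1,\dots,n\}$, $\ml A \succeq \ml B$ is then evaluated by the combinatorial identity $\sum_{\ml A}(-1)^{|\ml A|} = (-1)^n$ if $B_1 = \{1\}$ and $0$ otherwise (the paper's Proposition~\ref{lem:chain_sum}). This is where the $n$ enters the exponent and where the constraint $A_1 = \{1\}$ is imposed. Your outline names the right destination but the chain-sum lemma is the missing technical core, and the sign bookkeeping you propose would not close.
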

Hence, comparing \eqref{sec4:new_eterm} and \eqref{fourier_c_integrals}, the identity of functions
\be \label{fourier_canonical_chains_2}
C_0(u_1, \ldots, u_n) \ = \  \sum_{I \subseteq \{1, \ldots, n\}} (-1)^{|I|+1} \tilde \chi_I
\ee
on the simplex $\{u_i > 0; \sum u_i < 2\} \subset \R^n$ implies the Fourier identity \eqref{fourier_identity_2}.

While we are unable to prove \eqref{fourier_canonical_chains_2} for all $n$, we give a method for checking it for specific values of $n$. Our method is partly ad hoc, but suffices for $n \leq 7$ (for larger $n$, the computations are the same, but become intractable). We note that for $n=1, 2, 3$, the identities \eqref{fourier_identity_2_repeat} appear in \cite{Gao} as, respectively, the last (unnumbered) equations on pages 57 and 58, and equation (5.7).

\subsection{The canonical form}
We first reduce \eqref{fourier_identity_2_repeat} to a sum of indicator functions by converting all integrals to the region $\R^n_{\geq 0}$.

\begin{lemma} \label{lem:fourier_identity_indicators}
With notation as above, the Fourier Identity \eqref{fourier_identity_2_repeat} for $n>1$ follows from the equality of indicator functions
\be \label{fourier_identity_indicators}
\sum_{\ul{F} \in \Pi(n)} C'(\ul{F}) \ = \  E'(\ul{O})
\ee
on the region $\{0 < u_i < \supp(\widehat{f_i}),\  i = 1, \ldots, n) \} \subset \R^n_{\geq 0}$, where
\begin{align}
\label{cterm_indicators}
C'(\ul{F}) &\ = \  \frac{1}{2} \sum_{\substack{\vep_i = \pm 1, \\ i=1, \ldots, n}} \bigg( \mu(\ul{F},\ul{N}) + (-1)^{\nu(\ul{F})} \chi^*_{\nu(\ul{F})}\big(\sum_{i \in F_1}\vep_i u_i,\ldots,\sum_{i \in F_{\nu(\ul{F})}} \vep_i u_i \big) \bigg), \\
\label{eterm_indicators}
E'(\ul{O}) &\ = \  \sum_{I \subseteq \{1, \ldots, n\}} (-1)^{|I|+1} \tilde\chi(\sum_I u_i - \sum_{I^c} u_i ).
\end{align}
(Note that $E'(\ul{O})$ is just the sum of indicator functions in the integrand of $E(\ul{O})$.)
\end{lemma}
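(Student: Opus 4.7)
The statement is that the identity of integrals \eqref{fourier_identity_2_repeat} follows from the pointwise identity of integrands \eqref{fourier_identity_indicators}. My strategy is to convert each $C(\ul{F})$, currently an integral over $\R^{\nu(\ul{F})}$ against $\prod_l \widehat{F_l}(u_l)$, into an integral over $\R^n_{\geq 0}$ against $\prod_{i=1}^n \widehat{f_i}(u_i)$, with integrand exactly $C'(\ul{F})$ as in \eqref{cterm_indicators}. Since $E(\ul{O})$ is already in this form, both sides of \eqref{fourier_identity_2_repeat} then become integrals over $\R^n_{\geq 0}$ against the common measure $\prod \widehat{f_i}(u_i)\,du_i$, and the support hypothesis $\sum |u_i|<2$ confines the effective integration region to $\{0<u_i<\supp(\widehat{f_i})\}$. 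Pointwise equality of integrands on that region then forces equality of integrals.

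The key step is a convolution identity. Since $F_l(x)=\prod_{i\in F_l}f_i(x)$, substituting the inverse Fourier representation $f_i(x)=\int\widehat{f_i}(v_i)e^{2\pi iv_ix}\,dv_i$ into $F_l$ and taking the Fourier transform in $x$ gives $\widehat{F_l}(u_l)=\int\prod_{i\in F_l}\widehat{f_i}(v_i)\,\delta\big(u_l-\sum_{i\in F_l}v_i\big)\prod_{i\in F_l}dv_i$. Consequently, for any Schwartz $G$,
\begin{align*}
\int_{\R^{\nu(\ul{F})}}G(u_1,\ldots,u_{\nu(\ul{F})})\prod_{l=1}^{\nu(\ul{F})}\widehat{F_l}(u_l)\,du_l \ = \ \int_{\R^n}G\Big(\sum_{i\in F_1}v_i,\ldots,\sum_{i\in F_{\nu(\ul{F})}}v_i\Big)\prod_{i=1}^n\widehat{f_i}(v_i)\,dv_i.
\end{align*}
Applied to \eqref{sec4:new_cterm}, this identity replaces each $u_l$ appearing inside $\chi^*_{\nu(\ul{F})}$ by the block-sum $\sum_{i\in F_l}v_i$ and spreads the integration over $\R^n$ against the individual $\widehat{f_i}(v_i)$.

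Next I fold to $\R^n_{\geq 0}$. Each $\widehat{f_i}$ is even (since $f_i$ is an even real-valued Schwartz function), so $\int_\R h(v_i)\widehat{f_i}(v_i)\,dv_i=\int_0^\infty\sum_{\vep_i=\pm 1}h(\vep_iu_i)\widehat{f_i}(u_i)\,du_i$. Iterating in each coordinate introduces exactly the sum $\sum_{\vep\in\{\pm 1\}^n}$ over sign patterns that appears in \eqref{cterm_indicators}. Combining with the prefactor $\tfrac{1}{2}$ from \eqref{sec4:new_cterm} yields $C(\ul{F})=\int_{\R^n_{\geq 0}}C'(\ul{F})\prod_{i=1}^n\widehat{f_i}(u_i)\,du_i$, while by inspection $E(\ul{O})=\int_{\R^n_{\geq 0}}E'(\ul{O})\prod_{i=1}^n\widehat{f_i}(u_i)\,du_i$. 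Summing the first identity over $\ul{F}\in\Pi(n)$ and comparing with the second reduces \eqref{fourier_identity_2_repeat} to the pointwise identity \eqref{fourier_identity_indicators} on the support region. There is no substantive obstacle here: the argument is a bookkeeping exercise, the only delicate points being that the $\tfrac{1}{2}$ and the $2^n$-term sign sum combine correctly (they do not cancel; rather, the sign sum is absorbed into the definition of $C'(\ul{F})$), and that interchanging the orders of integration in the convolution identity is justified by the Schwartz decay of the $\widehat{f_i}$.
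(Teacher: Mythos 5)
Your proposal is correct and follows essentially the same route as the paper's proof: you (i) push the product $\prod_l \widehat{F_l}(u_l)$ through to $\prod_i \widehat{f_i}(v_i)$ via the convolution relation $\widehat{F_l} = \bigstar_{i\in F_l}\widehat{f_i}$ (the paper packages this as ``a linear change of variables''), and (ii) fold $\R^n$ to $\R^n_{\ge 0}$ using the evenness of each $\widehat{f_i}$, which introduces the $2^n$-term sign sum and yields $C(\ul{F})=\int_{\R^n_{\ge 0}} C'(\ul{F})\prod_i\widehat{f_i}(u_i)\,du_i$. The two write-ups differ only in how the first change of variables is phrased.
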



\begin{proof}
Our test functions $f_1, \ldots, f_n$ are all even, so we have two identities. First, for any partition $\ul{F} \in \Pi(n)$ having $k$ blocks,
\be
\int_{\R^k} g(u_1, \ldots, u_k) \prod_{i=1}^k \widehat{F_i}(u_i)du_i \ = \  \int_{\R^n} g\big( \sum_{i \in F_1}u_i,\ldots,\sum_{i \in F_{\nu(\ul{F})}} u_i \big) \prod_{i=1}^n \widehat{f_i}(u_i)du_i
\ee
holds for any integrand $g$ via a linear change of variables. Second,
%
\be
\int_{\R^n} h(u_1, \ldots, u_n) \prod_{i = 1}^{n} \widehat{f_i}(u_i)du_i \ = \  \sum_{\substack{\vep_i = \pm 1 \\ i=1, \ldots, n}} \int_{\R^n_{\geq 0}} h(\vep_1 u_1, \ldots, \vep_n u_n) \prod_{i = 1}^{n} \widehat{f_i}(u_i)du_i
\ee
holds for any $h$.  Applying these transformations gives, with notation as in \eqref{cterm_indicators},
\be \label{desym_cterm}
C(\ul{F}) \ = \  \int_{\R^n_{\geq 0}} C'(\ul{F}) \cdot \prod_{i=1}^n \widehat{f_i}(u_i)du_i,
\ee
which is the desired expression.
\end{proof}


We use the identity
\be
\chi(u) \ = \  1 - \tilde{\chi}(u) - \tilde{\chi}(-u)
\ee
to rewrite the left-hand side as a sum of products of terms of the form $\tilde{\chi}(\sum \vep_i u_i)$ with each $\vep_i = \pm 1$. The advantage of using $\tilde{\chi}$ throughout comes from not being an even function: to know whether $\chi(\sum \vep_i u_i) = 0$, we need to consider both $\sum \vep_i u_i > 1$ and $\sum \vep_i u_i < -1$, but with $\tilde{\chi}$ only the first case matters. This will facilitate several simplifications.

\begin{definition}[Combinatorial notation] We adopt the following notation: given a term $\tilde{\chi}(\vep_1u_1 + \cdots + \vep_n u_n),$ let $A$ be the set of indices for which $\vep_i = +1$ and $A^c$ the set for which $\vep_i = -1$. We define
\be
\tilde{\chi}_A\ :=\ \tilde{\chi}(\sum_A u_{a_i} - \sum_{A^c} u_{a_i}).
\ee
\end{definition}
This notation reduces arguments about products of $\tilde{\chi}(\sum_i \vep_i u_i)$ to combinatorial arguments about subsets $A \subseteq \{1, \ldots, n\}$. The $\chi^*$ integrand for a partition $\ul{F} \in \Pi(n)$ is thus, in combinatorial notation,
\be
\chi^*(\ul{F}) \ = \  \sum_{\substack{ \pi \in S_{\nu(\ul{F})} \\ \pi(1)=1 }} \prod_{i=1}^{\nu(\ul{F})} \big(1 -\tilde{\chi}_{F_{\pi(1)} \cup \cdots \cup F_{\pi(i)}} - \tilde{\chi}_{(F_{\pi(1)} \cup \cdots \cup F_{\pi(i)})^c} \big),
\ee
where $S_{\nu(\ul{F})}$ is the group of permutations of $\{1, \ldots, \nu(\ul{F})\}$.

Changing the signs of some of the $\vep_i$ in $\tilde \chi_A$ is equivalent to taking a symmetric difference, replacing $\tilde{\chi}_A$ with $\tilde{\chi}_{A \triangle J}$, where $J \subseteq \{1, \ldots, n\}$ is the set of indices whose signs have been changed. We note that $\mu(\ul{F},\ul{N}) = (-1)^{\nu(\ul{F})-1}(\nu(\ul{F})-1)!$ is the same as the number of permutations on the inner sum. We thus write
\begin{align} \nonumber
C'(\ul{F}) &\ = \  \frac{1}{2} \sum_{J \subseteq \{1, \ldots, n\}}\bigg(\mu(\ul{F},\ul{N}) \nonumber\\ & \ \ \ \ \ \ \ \ \ \ \ \ + (-1)^{\nu(\ul{F})} \sum_{\substack{ \pi \in S_{\nu(\ul{F})} \\ \pi(1)=1 }} \prod_{i=1}^{\nu(\ul{F})} \big(1 -\tilde{\chi}_{(F_{\pi(1)} \cup \cdots \cup F_{\pi(i)}) \triangle J} - \tilde{\chi}_{(F_{\pi(1)} \cup \cdots \cup F_{\pi(i)}) \triangle J^c} \big) \bigg)\nonumber\\
%
\label{cterm_combo_form}
&\ = \  \frac{1}{2} \sum_{J \subseteq \{1, \ldots, n\}} \sum_{\substack{ \pi \in S_{\nu(\ul{F})} \\ \pi(1)=1 }} \bigg( (-1)^{\nu(\ul{F})-1}\nonumber\\ & \ \ \ \ \ \ \ \ \ \ \ \ + \prod_{i=1}^{\nu(\ul{F})} \big(\tilde{\chi}_{(F_{\pi(1)} \cup \cdots \cup F_{\pi(i)}) \triangle J} + \tilde{\chi}_{(F_{\pi(1)} \cup \cdots \cup F_{\pi(i)}) \triangle J^c} - 1 \big) \bigg).
\end{align}
In combinatorial notation, the right-hand side of \eqref{fourier_identity_indicators} is just
\be \label{eterm_combo_form}
E'(\ul{O}) \ = \  \sum_{I \subseteq \{1, \ldots, n\}} (-1)^{|I|+1} \tilde{\chi}_I.
\ee

To shorten the notation, we write the summands in terms of chains rather than partitions.
%
%
Given a partition $\ul{F} \in \Pi(n)$ and a permutation $\pi \in S_{\nu(\ul{F})}$, such that $\pi(1)=1$, we obtain an ascending chain
\be F_{\pi(1)} \subset F_{\pi(1)} \cup F_{\pi(2)} \subset \cdots \subset F_{\pi(1)} \cup \cdots \cup F_{\pi(k)}\ =\ \{1, \ldots, n\}.\ee
Thus each choice of $\ul{F}$ and $\pi$ corresponds uniquely to a strictly ascending chain
\be
\ml{A} : A_1 \subsetneq A_2 \subsetneq \cdots \subsetneq A_k
\ee of subsets of $\{1, \ldots, n\}$ such that $1 \in A_1$ and $A_k = \{1, \ldots, n\}$. The corresponding product of indicator functions is then
\be
(\tilde{\chi}_{A_1 \triangle J} + \tilde{\chi}_{A_1^c \triangle J} - 1) (\tilde{\chi}_{A_2 \triangle J} + \tilde{\chi}_{A_2^c \triangle J} - 1) \cdots (\tilde{\chi}_{A_k \triangle J} + \tilde{\chi}_{A_k^c \triangle J} - 1).
\ee
Thus we can write the left-hand side of \eqref{fourier_identity_indicators} as
\be \label{cterm_chains_form}
\sum_{\ul{F} \in \Pi(n)} C'(\ul{F}) \ = \  \frac{1}{2} \sum_{J \subseteq \{1, \ldots, n\}} \sum_{\substack{\text{chains } \ml{A}, 1 \in A_1 \\ A_k \ = \  \{1, \ldots, n\}}} \bigg( (-1)^{k-1} + \prod_{i=1}^k (\tilde{\chi}_{A_i \triangle J} + \tilde{\chi}_{A_i \triangle J^c} - 1) \bigg),
\ee
where $\ml{A}$ ranges over all ascending chains of subsets of $\{1, \ldots, n\}$ such that $1 \in A_1$ and $A_k = \{1, \ldots, n\}$ is the last (largest) set in the chain.

Observe that a given product of $\tilde\chi$ terms occurs many times in the sum \eqref{cterm_chains_form} when the $(-1)$ factors are expanded. We account for this cancelation below and give the canonical form of the Fourier Identity. We employ the following standard facts about chains.

\begin{proposition}[Sums over chains] \label{lem:chain_sum}
Let $\ml{A}$ and $\ml{B}$ be chains. If $\ml{A} = A_1 \subset \cdots \subset A_k$, write $k = |\ml{A}|$, and if $\ml{B}$ is a subchain of $\ml{A}$, write $\ml{A} \succeq \ml{B}$. For a fixed $\ml{B}$ with $1 \in B_1$,
\be
\displaystyle{\sum_{\substack{\ml{A} \succeq \ml{B}, 1 \in A_1, \\ A_k = \{1, \ldots, n\}}} (-1)^{|\ml{A}|} \ = \  \begin{cases} (-1)^n & \text{ {\rm if} } B_1 = \{1\}, \\ 0 & \text{ {\rm otherwise}.}\end{cases}}
\ee
\end{proposition}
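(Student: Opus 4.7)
The strategy is to factor the alternating sum over the ``segments'' of $\mathcal{B}$. Writing $\mathcal{B} = B_1 \subsetneq \cdots \subsetneq B_m$, any refinement $\mathcal{A} = A_1 \subsetneq \cdots \subsetneq A_k$ (with $1 \in A_1$ and $A_k = \{1,\ldots,n\}$) is uniquely determined by a pre-segment chain strictly below $B_1$ (subject to $1 \in A_1$), an internal chain strictly between each consecutive pair $B_\ell, B_{\ell+1}$, and a post-segment chain strictly above $B_m$ ending at $\{1,\ldots,n\}$. Writing $s_\ell$ for the number of $A_i$'s in the $\ell$-th segment, we have $k = m + s_0 + \cdots + s_m$, so $(-1)^k = (-1)^m \prod_\ell (-1)^{s_\ell}$, and the full sum factors as a product of per-segment alternating sums.

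The core combinatorial input is the identity
$$\sum_{S = C_0 \subsetneq \cdots \subsetneq C_r = T} (-1)^r \ =\ (-1)^{|T \setminus S|} \qquad (S \subsetneq T),$$
which I would verify either via Philip Hall's formula on the Boolean lattice (whose M\"obius function is $\mu(S,T) = (-1)^{|T\setminus S|}$), or directly from $\sum_{r \geq 0}(-1)^r r!\,S(p,r) = (-1)^p$ via the exponential generating function $\sum_{r \geq 0}(1-e^x)^r = e^{-x}$. Applied to each internal segment (with fixed endpoints $B_\ell, B_{\ell+1}$), this yields a contribution of $(-1)^{|B_{\ell+1}\setminus B_\ell|+1}$; applied to the post-segment, with the forced element $\{1,\ldots,n\}$ in place of the endpoint $C_r$, it contributes $(-1)^{|\{1,\ldots,n\}\setminus B_m|}$ when $B_m \neq \{1,\ldots,n\}$ and equals $1$ otherwise.

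The delicate piece is the pre-segment, whose chains satisfy $\{1\} \subseteq A_1 \subsetneq \cdots \subsetneq A_{s_0} \subsetneq B_1$ with no fixed endpoints. The map $A_i \mapsto A_i \setminus \{1\}$ identifies them with strict chains of proper subsets of $U := B_1 \setminus \{1\}$. When $B_1 = \{1\}$, so $U = \emptyset$, only the empty chain is available and contributes $1$. When $B_1 \supsetneq \{1\}$, the involution that toggles the presence of $\emptyset$ at the bottom of the chain pairs each chain with one of opposite parity and has no fixed points (since $\emptyset \subsetneq U$ is always a valid element to insert or remove), so the alternating sum vanishes. This gives the dichotomy of the proposition; when $B_1 = \{1\}$, the telescoping $\sum_{\ell=1}^{m-1}|B_{\ell+1}\setminus B_\ell| + |\{1,\ldots,n\}\setminus B_m| = n-1$ collapses all the remaining signs to exactly $(-1)^n$. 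The main obstacle is the pre-segment, whose non-standard endpoint conditions force the separate involution argument; the rest is routine sign bookkeeping.
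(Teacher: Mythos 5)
Your proof is correct and takes essentially the same route as the paper's: factor the alternating sum over the segments determined by $\mathcal{B}$, kill the bottom segment by an involution when $B_1 \neq \{1\}$ (the paper's Lemma B.1(1) toggles $\{1\}$ at the bottom; you toggle $\emptyset$ after stripping $\{1\}$, which is the same bijection), and evaluate each remaining segment via the alternating chain count over a Boolean interval (the paper's Lemma B.1(2), proved there by induction on $n$, which you instead invoke as Philip Hall's formula for the M\"obius function of the Boolean lattice). The only cosmetic difference is that you use strictly open segments with a $(-1)^m$ correction where the paper uses closed segments with a $(-1)^{|\mathcal{B}|}$ double-counting correction, and the sign bookkeeping you sketch does indeed collapse to $(-1)^n$ in both the $B_m = \{1,\ldots,n\}$ and $B_m \neq \{1,\ldots,n\}$ cases.
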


See appendix \ref{appendix:chains} for a proof.

\begin{lemma}[Fourier identity, canonical form] \label{lem:intermediate_fouriertwo}
With notation as above,
\be \label{intermediate_fouriereq}
\sum_{\ul{F} \in \Pi(n)} C'(\ul{F}) \ = \  \sum_{J \subseteq \{2, \ldots, n\}} \sum_{\substack{\text{{\rm chains} } \ml{A} \\ A_1 \ = \  \{1\}}} (-1)^{n-|\ml{A}|} \prod_{i=1}^{|\ml{A}|} (\tilde{\chi}_{A_i \triangle J} + \tilde{\chi}_{A_i^c \triangle J}),
\ee
where $\ml{A}$ ranges over the chains $A_1 \subset \cdots \subset A_k$ ($k = |\ml{A}|$) such that $A_1 = \{1\}$. (We do not require $A_k = \{1, \ldots, n\}$.)
\end{lemma}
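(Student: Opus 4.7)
The plan is to transform expression \eqref{cterm_chains_form} into the stated canonical form via three combinatorial moves. Writing $k=|\ml{A}|$, I start from
\[
\frac{1}{2}\sum_{J\subseteq\{1,\ldots,n\}}\sum_{\substack{\ml{A},\,1\in A_1\\A_k=\{1,\ldots,n\}}}\!\Bigl((-1)^{k-1}+\prod_{i=1}^{k}\bigl(\tilde\chi_{A_i\triangle J}+\tilde\chi_{A_i\triangle J^c}-1\bigr)\Bigr).
\]
The bracketed summand is manifestly invariant under $J\mapsto J^c$, since the two $\tilde\chi$ terms inside each factor are simply swapped. Pairing $J$ with its complement and keeping the unique representative with $1\notin J$ absorbs the $\frac{1}{2}$ and restricts the outer sum to $J\subseteq\{2,\ldots,n\}$.

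Second, associativity of the symmetric difference with the universal set $\{1,\ldots,n\}$ gives $A_i\triangle J^c=(A_i\triangle J)^c=A_i^c\triangle J$ for every $A_i$, so each factor becomes $\tilde\chi_{A_i\triangle J}+\tilde\chi_{A_i^c\triangle J}-1$, the very shape appearing in the target identity. I then expand the chain product by choosing, independently for each $i$, either the $(\tilde\chi_{A_i\triangle J}+\tilde\chi_{A_i^c\triangle J})$ part or the constant $-1$; the chosen indices carve out a (possibly empty) subchain $\ml{B}\subseteq\ml{A}$, with the $-1$'s contributing a sign $(-1)^{k-|\ml{B}|}$. The pivotal observation is that the empty-subchain term contributes $(-1)^{k}$, which cancels exactly against the stray constant $(-1)^{k-1}$ in the bracket, leaving only nonempty subchains.

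After these two steps the sum reads
\[
\sum_{J\subseteq\{2,\ldots,n\}}\sum_{\substack{\ml{A},\,1\in A_1\\A_k=\{1,\ldots,n\}}}\sum_{\substack{\ml{B}\subseteq\ml{A}\\\ml{B}\neq\varnothing}}(-1)^{|\ml{A}|-|\ml{B}|}\prod_{B\in\ml{B}}\bigl(\tilde\chi_{B\triangle J}+\tilde\chi_{B^{c}\triangle J}\bigr),
\]
and the heart of the argument is to swap the two inner sums. For fixed $\ml{B}$ the product factor is independent of $\ml{A}$, so I can apply Proposition \ref{lem:chain_sum}: the inner sum $\sum_{\ml{A}\succeq\ml{B}}(-1)^{|\ml{A}|}$ equals $(-1)^{n}$ when $B_1=\{1\}$ and vanishes otherwise. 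Combining this with the $(-1)^{-|\ml{B}|}$ already in front yields the coefficient $(-1)^{n-|\ml{B}|}$; simultaneously the chain-sum identity enforces $B_1=\{1\}$ and makes the old constraint $A_k=\{1,\ldots,n\}$ disappear. Renaming $\ml{B}$ as $\ml{A}$ reproduces \eqref{intermediate_fouriereq} verbatim. The only step requiring genuine care is the cancellation between the empty-subchain contribution and the stray constant; once that is observed, Proposition \ref{lem:chain_sum} does all the remaining bookkeeping.
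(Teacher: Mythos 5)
Your proof is correct and follows essentially the same route as the paper's: restrict to $1 \notin J$ by the $J \leftrightarrow J^c$ symmetry, expand the $(-1)$'s into a sum over subchains, cancel the empty-subchain term against the stray $(-1)^{k-1}$, swap the order of summation, and invoke Proposition \ref{lem:chain_sum} to collapse the outer chain sum. The only cosmetic difference is your explicit remark that $A_i \triangle J^c = A_i^c \triangle J$, which the paper leaves implicit.
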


\begin{proof}
Consider the expansion for $\sum_{\ul{F} \in \Pi(n)} C'(\ul{F})$ in \eqref{cterm_chains_form}. Since the right hand side is invariant under interchanging $J$ and $J^c$, we replace $\frac{1}{2} \sum_{J \subseteq \{1, \ldots, n\}}$ with $\sum_{J \subseteq \{2, \ldots, n\}}$; that is, we may assume without loss of generality that $1 \notin J$.

Now we expand the $(-1)$ factors in \eqref{cterm_chains_form}. The $(-1)^k$ cancels with the $(-1)^{k-1}$, so we are left with
\begin{align}
\sum_{\ul{F} \in \Pi(n)} C'(\ul{F}) &\ = \  \sum_{J \subseteq \{2, \ldots, n\}} \sum_{\substack{\text{chains } \ml{A}, 1 \in A_1 \\ A_k \ = \  \{1, \ldots, n\}}} \sum_{\substack{ W \subseteq \{1, \ldots, t\} \\ W \ne \eset}} (-1)^{|\ml{A}| - |W|} \prod_{i \in W} (\tilde{\chi}_{A_i \triangle J} + \tilde{\chi}_{A_i \triangle J^c}) \nonumber\\
&\ = \  \sum_{J \subseteq \{2, \ldots, n\}} \sum_{\substack{\text{chains } \ml{A}, 1 \in A_1 \\ A_k \ = \  \{1, \ldots, n\}}} \sum_{\substack{\text{chains } \ml{B} \preceq \ml{A} \\ \ml{B} \ne \eset}} (-1)^{|\ml{A}| - |\ml{B}|} \prod_{\ml{B}} (\tilde{\chi}_{B_i \triangle J} + \tilde{\chi}_{B_i \triangle J^c}),
\end{align}
where $\ml{B}$ ranges over the subchains of $\ml{A}$ (excluding the `empty chain' with no sets). We switch orders of summation on $\ml{B}$ and $\ml{A}$. We have
\be
\sum_{\substack{\text{chains } \ml{A}, 1 \in A_1 \\ A_k \ = \  \{1, \ldots, n\}}} \sum_{\eset \ne \ml{B} \preceq \ml{A}} \ = \  \sum_{\substack{ \ml{B} \\ 1 \in B_1}} \sum_{\substack{ \ml{A} \succeq \ml{B}, 1 \in A_1, \\ A_k \ = \  \{1, \ldots, n\}}},
\ee
and so
\begin{align}
\sum_{\ul{F} \in \Pi(n)} C'(\ul{F}) &\ = \  \sum_{J \subseteq \{2, \ldots, n\}} \sum_{\substack{\text{chains } \ml{B} \\ 1 \in B_1}}  (-1)^{|\ml{B}|} \prod_{\ml{B}} (\tilde{\chi}_{B_i \triangle J} + \tilde{\chi}_{B_i \triangle J^c}) \Big( \sum_{\substack{\ml{A} \succeq \ml{B}, 1 \in A_1, \\ A_k \ = \  \{1, \ldots, n\}}} (-1)^{|\ml{A}|} \Big) \nonumber\\
&\ = \  \sum_{J \subseteq \{2, \ldots, n\}} \sum_{\substack{\text{chains } \ml{B} \\ B_1 \ = \  \{1\}}} (-1)^{n-|\ml{B}|} \prod_{\ml{B}} (\tilde{\chi}_{B_i \triangle J} + \tilde{\chi}_{B_i \triangle J^c})
\end{align}
by Proposition \ref{lem:chain_sum}.
\end{proof}

\subsection{Breaking down the combinatorics}
We describe our approach to confirm the Fourier identity \eqref{fourier_identity_indicators} in the cases $n \leq 7$. These arguments are impractical to do by hand for $n \geq 4$; we ran them in Mathematica with code available at
 \begin{itemize} \item \url{http://www-personal.umich.edu/~jakelev/} \item \tiny \url{http://web.williams.edu/Mathematics/sjmiller/public_html/math/papers/jakel/FourierIdentity.tar}.\normalsize \end{itemize}



The simplifications we use are as follows.

\begin{lemma}[Simplifications] \label{lem:simplifications}
Let $A, B \subset \{1,\ldots,n\}$. Then
\begin{align}
\label{subsetsimp}
\tilde{\chi}_A \cdot \tilde{\chi}_B &\ = \  \tilde{\chi}_A \text{ {\rm whenever} } A \subset B.
\end{align}
Let $A_1, \ldots, A_k \subseteq \{1, \ldots, n\}$, with $k \geq 2$. For each $i \in \{1, \ldots, n\}$, let $e_i$ be the number of the $A_j$'s that contain $i$. Then
\begin{align}
\label{addineqsimp}
\tilde{\chi}_{A_1} \cdots \tilde{\chi}_{A_k} &\ = \  0 \text{ {\rm if} }\ e_i \leq \tfrac{3}{4}k\ \text{ {\rm for each} }\ i \in \{1, \ldots, n\}.
\end{align}
\end{lemma}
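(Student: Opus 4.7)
Both identities are to be read pointwise on the region $\{u_i \geq 0 : \sum_i u_i < 2\}$, since this is where the integrand $\prod_i \widehat{f_i}(u_i)$ is supported under the standing hypothesis $\sum_i |u_i| < 2$. So it suffices to verify the equalities as identities of indicator functions on this region.

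For the subset identity \eqref{subsetsimp}, my plan is to observe that $\tilde\chi_W$ is \emph{monotone} in $W$ on the nonnegative orthant. Concretely, when $A \subset B$, the decomposition $B = A \sqcup (B \setminus A)$ and $A^c = (B \setminus A) \sqcup B^c$ yields
\[ \Big(\sum_{i \in B} u_i - \sum_{i \notin B} u_i\Big) - \Big(\sum_{i \in A} u_i - \sum_{i \notin A} u_i\Big) \ = \ 2 \sum_{i \in B \setminus A} u_i \ \geq\ 0, \]
since each $u_i \geq 0$. Thus the argument of $\tilde\chi_B$ is at least that of $\tilde\chi_A$, so $\tilde\chi_A = 1 \Rightarrow \tilde\chi_B = 1$, whence $\tilde\chi_A \tilde\chi_B = \tilde\chi_A$.

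For the vanishing identity \eqref{addineqsimp}, the plan is a simple averaging argument that uses the support restriction decisively. Suppose for contradiction that $\tilde\chi_{A_1}(u) \cdots \tilde\chi_{A_k}(u) \neq 0$ at some $u$ in the support region. Then $\sum_{i \in A_j} u_i - \sum_{i \notin A_j} u_i > 1$ for each $j = 1, \ldots, k$. Summing these $k$ inequalities and noting that each $u_i$ appears with net coefficient $e_i - (k - e_i) = 2e_i - k$ gives
\[ \sum_{i=1}^n (2e_i - k)\, u_i \ > \ k. \]
On the other hand, the hypothesis $e_i \leq \tfrac{3}{4} k$ forces $2e_i - k \leq \tfrac{k}{2}$ for each $i$, so the nonnegativity of the $u_i$ together with $\sum_i u_i < 2$ yields
\[ \sum_{i=1}^n (2e_i - k)\, u_i \ \leq \ \tfrac{k}{2} \sum_{i=1}^n u_i \ < \ \tfrac{k}{2} \cdot 2 \ = \ k, \]
a contradiction. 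Hence $\tilde\chi_{A_1} \cdots \tilde\chi_{A_k} \equiv 0$ on the support region, as claimed.

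There is no real obstacle here: both statements reduce to summing or subtracting the defining linear inequalities for $\tilde\chi$, and the only nontrivial ingredient is the sum-of-supports bound $\sum_i u_i < 2$ used in the second part to close the contradiction.
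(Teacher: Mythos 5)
Your proof is correct and essentially the same as the paper's: part \eqref{subsetsimp} is the monotonicity of $W \mapsto \tilde\chi_W$ on $\R^n_{\geq 0}$ (you compute the difference $2\sum_{B\setminus A} u_i \geq 0$ directly, while the paper notes $B^c \subset A^c$ and compares sums; same content), and part \eqref{addineqsimp} is exactly the paper's argument of summing the $k$ defining inequalities, bounding $2e_i - k \leq k/2$, and invoking $\sum_i u_i < 2$ to reach a contradiction.
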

The first equation says, equivalently, that given a product $\tilde{\chi}_{A_1} \cdots \tilde{\chi}_{A_k},$ we need only keep the $\tilde{\chi}_{A_j}$'s for which the subsets $A_j \subset \{1, \ldots, n\}$'s are minimal with respect to containment, i.e., the $\tilde{\chi}$ terms having few positive signs. The identity is essentially a formal sum of antichains, with additional relations such as \eqref{addineqsimp}. We remark also that for $k=2$, \eqref{addineqsimp} is just the statement $\tilde \chi_A \cdot \tilde \chi_B = 0$ when $A \cap B = \eset$.

\begin{proof}[Proof of \eqref{subsetsimp}] If $A \subset B$, then $B^c \subset A^c$, so we have the inequalities
\be
\sum_B u_{b_i}\ \geq\ \sum_A u_{a_i}\ \  \text{ and }\ \  \sum_{A^c} u_{a_i}\ \geq\ \sum_{B^c} u_{b_i}.
\ee

Combining these yields that whenever $\sum_A u_{a_i} - \sum_{A^c} u_{a_i} > 1$, we also have
\be
\sum_B u_{b_i} - \sum_{B^c} u_{b_i}\ \geq\ \sum_A u_{a_i} - \sum_{A^c} u_{a_i}\ \geq\ 1.
\ee

So if $\tilde{\chi}_A = 1$, it follows that $\tilde{\chi}_B = 1$ (if $\tilde{\chi}_A = 0$, then both sides of \eqref{subsetsimp} are 0).
\end{proof}

\begin{proof}[Proof of \eqref{addineqsimp}] Add the inequalities $\sum_{a \in A_j} u_a - \sum_{a' \in A_j^c} u_{a'} > 1$ together. Then $+u_i$ occurs $e_i$ times and $-u_i$ occurs $k-e_i$ times, so the result is
\be
(2e_i-k) u_1 + \cdots + (2e_n-k) u_n\ >\ k.
\ee
The condition $e_i \leq \tfrac{3}{4}k$ is the same as $\tfrac{k}{2} \geq 2e_i - k$, yielding
\be
\tfrac{k}{2}(u_1 + \cdots + u_n)\ >\ k,
\ee
that is, $u_1 + \cdots + u_n > 2$, violating the support restriction.
\end{proof}

\begin{rmk}
For $k \leq 4$, the second condition \eqref{addineqsimp} is equivalent to $\bigcap_{i=1}^k A_i = \eset$. For $k > 4$ it is a stronger condition.
\end{rmk}

In sum, our method of verifying the Fourier identity \eqref{fourier_identity_indicators} is to apply the simplifications above to the sum \eqref{intermediate_fouriereq} to simplify and remove terms. We are able to verify the cases $n\leq 7$ this way; for $n = 8$ the verification becomes intractable, since the number of terms in the left-hand side of \eqref{fourier_identity_indicators} becomes large ($2^n \cdot$ sequence A027882 in the Online Encylopedia of Integer Sequences).

Although we cannot prove the identity for all $n$, we give one last conjecture that indicates one way of grouping terms in the identity.

\begin{conjecture} \label{conj:simplify_fourier}
For fixed $n$ and $J \subseteq \{2, \ldots, n\}$, let
\begin{align} \label{J_sum_simplified}
\op{simp}_1(J;n) &\ = \  \bigg( \sum_{A \subseteq \{J\}} (-1)^{|A|} \tilde\chi_{A \cup \{1\}} \bigg) \cdot
\bigg( \sum_{B : J \subseteq B \subseteq \{2, \ldots, n\}} (-1)^{|B| - |J| - 1} \tilde \chi_B \bigg), \nonumber\\
\op{simp}_2(n) &\ = \  \sum_{A \subseteq \{2, \ldots, n\}} (-1)^{|A|} \tilde \chi_{A \cup \{1\}}.
\end{align}
Then the inner sum of the Fourier identity \eqref{intermediate_fouriereq} is
\be
\sum_{\substack{\text{chains } \ml{A} \\ A_1 \ = \  \{1\}}} (-1)^{n-|\ml{A}|} \prod_{i=1}^{|\ml{A}|} (\tilde{\chi}_{A_i \triangle J} + \tilde{\chi}_{A_i^c \triangle J}) \ = \
\begin{cases}
\op{simp}_1(J;n) & \text{ if } J \ne \{2, \ldots, n\} \\
\op{simp}_1(J;n) + \op{simp}_2(n) & \text { if } J = \{2, \ldots, n\}.
\end{cases}
\ee
\end{conjecture}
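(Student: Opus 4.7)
My plan is to reduce Conjecture \ref{conj:simplify_fourier} to a bookkeeping problem by decomposing the symmetric-difference structure and then re-indexing the chain sum. Fix $J \subseteq \{2,\ldots,n\}$. For a chain $\ml A = (\{1\} = A_1 \subsetneq A_2 \subsetneq \cdots \subsetneq A_k)$, set $D_i = A_i \triangle J$ and decompose $D_i = B_i \sqcup C_i$ with $B_i = A_i \setminus J$ and $C_i = J \setminus A_i$. Then $1 \in B_i$ for all $i$, the $B_i$ form a weakly increasing sequence in $\{1,\ldots,n\}\setminus J$, and the $C_i$ form a weakly decreasing sequence in $J$. The factors in the inner sum become $\tilde \chi_{D_i} + \tilde \chi_{D_i^c}$, and the ``+'' sets contain $1$ while the ``$-$'' sets do not.

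The key observation (which replaces the chain ordering of the $A_i$'s by a much simpler one on $D_i$'s) is: for $i < j$, one has $D_i \subseteq D_j$ if and only if $C_i = C_j$, and dually $D_i^c \subseteq D_j^c$ if and only if $B_i = B_j$. The plan is then to expand $\prod_i(\tilde\chi_{D_i} + \tilde\chi_{D_i^c})$ into $2^k$ terms indexed by sign vectors $\epsilon \in \{+,-\}^k$, split each such monomial into a ``+-subchain'' (indices $P$) and ``$-$-subchain'' (indices $M = P^c$), and use the subset simplification \eqref{subsetsimp} to collapse runs where $C_i$ is constant on $P$ or $B_i$ is constant on $M$. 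What survives in the ``+'' part is a single $\tilde \chi_F$ with $1 \in F \subseteq \{1\} \cup J$, and in the ``$-$'' part a single $\tilde \chi_G$ with $J \subseteq G \subseteq \{2,\ldots,n\}$.

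Having identified these surviving labels, I would switch orders of summation: first fix an allowable pair $(F,G)$, then sum over chains $\ml A$ and sign vectors $\epsilon$ that produce this pair. I expect the resulting signed count to factor as a product, giving
\[
\sum_{(F,G)} (-1)^{|F\setminus\{1\}|}\,(-1)^{|G|-|J|-1}\,\tilde\chi_F\,\tilde\chi_G,
\]
which is precisely $\op{simp}_1(J;n)$. The degenerate family of terms in which $M = \emptyset$ (no ``$-$'' factors) never produces a $\tilde\chi_G$, and I expect these to give an extra contribution $\op{simp}_2(n)$; when $J \ne \{2,\ldots,n\}$, this extra contribution should cancel because the set $G = \{2,\ldots,n\}$ then lies strictly above $J$ in the product sum and absorbs the boundary, while for $J = \{2,\ldots,n\}$ no such absorption is possible and $\op{simp}_2$ persists. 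That asymmetry is the combinatorial content of the split in the conjecture.

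The main obstacle is the step in which one sums over chains producing a prescribed pair $(F,G)$. Not every $\epsilon$-choice in the expansion collapses to a single $\tilde\chi_F\tilde\chi_G$: mixed monomials in which $C_i$ is not constant on $P$ (or $B_i$ not constant on $M$) yield products like $\tilde\chi_{D_i}\tilde\chi_{D_j}$ with $D_i, D_j$ incomparable. These non-collapsing terms must be handled either by an involution on chains that pairs them with opposite signs, or by showing they vanish identically on the simplex $\{u_i>0, \sum u_i < 2\}$ via \eqref{addineqsimp}. Constructing such an involution uniformly in $n$ is the main technical hurdle; once achieved, the remaining rearrangement of signed chain sums and the identification with $\op{simp}_1$ and $\op{simp}_2$ should be a routine Möbius-function computation in the Boolean lattice, using that $\mu(A,B) = (-1)^{|B|-|A|}$ and Proposition \ref{lem:chain_sum}.
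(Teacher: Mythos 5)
First, note that this statement is stated as a \emph{Conjecture} in the paper, and the authors explicitly say they lack a proof: the text immediately following it reports that the identity ``is easily checked for $J = \emptyset, \{2\}, \{2, \ldots, n\}$'' and then adds ``but we do not as yet have a proof.'' There is no paper proof to compare your attempt against; what the paper does prove (Theorem \ref{thm:density_n6}) is the weaker Fourier identity \eqref{fourier_identity_2_repeat} for $n \leq 7$, by machine verification of the canonical form \eqref{intermediate_fouriereq}, not Conjecture \ref{conj:simplify_fourier} itself.

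That said, your reduction is sound as far as it goes. With $D_i = A_i \triangle J$ decomposed as $D_i = B_i \sqcup C_i$, where $B_i = A_i \setminus J$ is weakly increasing, $C_i = J \setminus A_i$ is weakly decreasing, and $1 \in B_i$ throughout, the equivalences $D_i \subseteq D_j \Leftrightarrow C_i = C_j$ and $D_i^c \subseteq D_j^c \Leftrightarrow B_i = B_j$ are correct, and they do translate the chain structure on the $A_i$ into something more tractable. However, you correctly identify where the argument stops being a proof: after expanding $\prod_i (\tilde\chi_{D_i}+\tilde\chi_{D_i^c})$ into $2^{k}$ monomials, those in which the $C_i$ are not constant on the ``$+$'' indices (or the $B_i$ not constant on the ``$-$'' indices) give products of pairwise-incomparable $\tilde\chi$'s, which do not collapse via \eqref{subsetsimp} and are not all killed by \eqref{addineqsimp}. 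The cancellation of these non-collapsing terms is the entire content of the conjecture, and it is left open in your proposal: the claims that ``the signed count should factor as a product'' and that the $J \ne \{2,\ldots,n\}$ boundary contribution gets absorbed are stated as expectations, not derived. Until a sign-reversing involution (or some replacement) for the non-collapsing monomials is constructed and shown to be compatible with the support restriction uniformly in $n$, the proposal, like the paper, contains a promising reduction but not a proof.
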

It is easy to see, by summing over $J \subseteq \{2, \ldots, n\}$, that this conjecture implies the Fourier identity. The identity is easily checked for $J = \eset, \{2\}, \{2, \ldots, n\}$; for the remaining cases, it is sufficient (by relabeling) to consider $J = \{2, \ldots, i\}$ for $3 \leq i \leq n-1$, but we do not as yet have a proof.




\section{Concluding remarks} \label{sec:concluding}

By adopting an appropriate combinatorial perspective, we are able to unify the analysis of the number theory and random matrix theory expansions. We reduce showing agreement of the two expressions of the $n$-level density to a combinatorial identity, which we can verify for $n \le 7$. As there should be a purely combinatorial proof of this identity, we conclude with a few thoughts related to it; we welcome any correspondence with people interested in extending these arguments.



\subsection{Verifying the identity formally} \label{sec:verify_formal}
We can view the Fourier Identity as a formal identity: the indicator functions $\tilde \chi_A$ generate a subring $C(n) \subseteq \ml{L}_{\infty}(\R^n)$ that is a quotient of a polynomial ring in $2^n$ variables,
\be \C[x_A : A \subseteq \{1, \ldots, n\}] \to C(n), \qquad x_A \mapsto \tilde \chi_A \ee
As a ring of functions, $C(n)$ is certainly reduced, so it is sufficient to check that the identity holds over every quotient $C(n)/P$, where $P \in \op{Spec} C(n)$ is a prime ideal.

By equation \eqref{subsetsimp} of Lemma \ref{lem:simplifications}, this map factors through the quotients
\be C'(n) = \frac{\C[x_A : A \subseteq \{1, \ldots, n\}]}{( x_A x_B - x_A : A \subseteq B )}, \qquad C''(n) = \frac{C(n)}{\sum ( x_{A_1} \cdots x_{A_k} ) }, \ee
where the second quotient is by the monomials $x_{A_1} \cdots x_{A_k}$ such that $\tilde \chi_{A_1} \cdots \tilde \chi_{A_k}$ is identically zero as an indicator function in the supported region, namely
\be \{u_1 > 0, \ldots, u_n > 0, \sum u_i < 2\} \subseteq \R^n. \ee
(We remark that condition \eqref{addineqsimp} does not describe all such products.)

For $C'(n)$, prime ideals are in one-to-one correspondence with antichains: if $\{W_1, \ldots, W_k\}$ is an antichain, the corresponding prime ideal is
\be P\ =\ (x_A : \text{ for each } i,  \ A \not \supseteq W_i) + (x_A - 1 : \text{ for some } i, A \supseteq W_i).\ee
Passing to $C''(n)$ just removes `identically-zero' antichains from consideration. For each of the remaining antichains $\ml{W} = \{W_1, \ldots, W_k\}$, we consider the Fourier Identity under the map $C(n) \to \C$ that sends
\be \label{fourier_evaluate}
x_A \mapsto \begin{cases} 1 & A \supseteq W_i \text{ for some } i, \\ 0 & \text{ otherwise}. \end{cases}
\ee
Verifying that the Fourier identity holds under each of these maps is sufficient to verify the full Fourier identity. Assuming $C''(n) \cong C(n)$ (that is, assuming there are no additional relations between the $\tilde \chi_A$), this is also a necessary condition.



We express both sides of the Fourier identity in terms of Euler characteristics. Fix an antichain $\ml{W} = \{W_1, \ldots, W_k\}$ and let
\be
S\ =\  S(\ml{W}) \ =\ \{A \subseteq \{1, \ldots, n\} : A \subseteq W_i^c \text{ for some } i\}
\ =\ \bigcup_{i=1}^k [\eset, W_i^c].
\ee
Thus $S$ is a simplicial set; its vertices are $\bigcup_{i=1}^k W_i^c$ and its maximal faces are the $W_i^c$. By evaluating as in \eqref{fourier_evaluate}, the sets $A$ with $A^c \in S$ evaluate to 1, so the right-hand side of the Fourier identity becomes
\be \sum_{A : A^c \in S} (-1)^{|A|+1}\ =\ \sum_{A \in S} (-1)^{n - |A| + 1}\ =\ (-1)^{n-1} \chi_{\rm Eul}(S), \ee
up to a sign, the Euler characteristic of the simplicial complex $S$.

We now express the left-hand side in a related way. First, we determine the value of $\tilde \chi_{A \triangle J} + \tilde \chi_{A^c \triangle J}$ under the evaluation map. Given $J, W \subseteq \{1, \ldots, n\}$, let $U(J,W)$ be the union of segments from $\ml{P}(1, \ldots, n)$,
\be
U(J,W)\ =\ \big[W - J, W^c \cup (W - J) \big] \cup \big[W \cap J, W^c \cup (W \cap J) \big].
\ee
It is easy to see the following:
\begin{enumerate}
\item the two segments are disjoint if $W \ne \eset$,
\item $A \triangle J \supseteq W$ if and only if $A$ is in the first segment,
\item $A^c \triangle J \supseteq W$ if and only if $A$ is in the second segment.
\end{enumerate}
In particular, we conclude that, evaluated at $\ml{W}$,
\be \tilde \chi_{A \triangle J} + \tilde \chi_{A^c \triangle J}\ =\ \begin{cases} 1 & A \in \bigcup_{i=1}^k U(J,W_i) \\ 0 & \text{ otherwise}. \end{cases} \ee
We are only interested in chains where $A_1 = \{1\}$, so let
\begin{align}
U(J;\ml{W}) &\ =\ \bigcup_{i=1}^k U(J,W_i), \\
\widehat{U} (J;\ml{W}) &\ =\ \big(\{1\},\{1, \ldots, n\} \big] \cap U(J;\ml{W}).
\end{align}
Evaluating the inner summand of the Fourier Identity for $J$ gives a sum over all chains in $U$ that begin with $A_1 = \{1\}$. If $\{1\} \notin U$, there are no such chains in $\ml{U}$, so the sum is 0; otherwise, such chains are in bijection with \emph{all} chains $\ml{A}'$ in $\widehat{U}$ (with the length off by 1), so that
\be \sum_{\substack{\text{chains } \ml{A} \\ A_1 \ = \  \{1\}, \\ A_i \in U}} (-1)^{n-|\ml{A}|}\ =\ (-1)^n \sum_{\substack{ \text{chains } \ml{A}' \\ A_i \in \widehat{U}}} (-1)^{|\ml{A}'|-1}\ =\ (-1)^n \chi_{\rm Eul}(\widehat{U}),
\ee
the Euler characteristic of the order complex $\Delta_{\rm ord}(\widehat{U})$.


In other words, the Fourier Identity now reads, with the $(-1)^n$ canceled and $\chi_{\rm Eul}$ denoting the Euler characteristic,
\be \sum_{\substack{ J \subseteq \{2, \ldots, n\} \\ \{1\} \in U(J;\ml{W})}} -\chi_{\rm Eul}(\widehat{U}(J;\ml{W}))\ =\ \chi_{\rm Eul}(S(\ml{W})). \ee

\subsection{Which products of the indicator functions $\tilde \chi_A$ are 0?}

In addition to the approach using Euler characteristics in section \ref{sec:verify_formal}, the authors are interested in suggestions or answers to the problem of determining which products $\tilde \chi_{A_1} \cdots \tilde \chi_{A_k}$ are identically zero in the integration region, where $A_j \subseteq \{1, \ldots, n\}$ and
\[\tilde \chi_A := \tilde \chi \big( \sum_{i \in A} x_i - \sum_{i \in A^c} x_i),\]
and $\tilde \chi(x)$ is the indicator function of $(1,\infty)$.

In other words, we wish to solve the following linear program: let $M$ be an $k\times n$ matrix with each entry $\pm 1$, and let
\begin{align}
b &= (1 \cdots 1)^T \in \R^k, \\
c &= (1 \cdots 1)^T \in \R^n.
\end{align}
Minimize $c^T x = \sum_i x_i$, subject to
\begin{align}
Mx &\geq b, \\
x &\geq 0.
\end{align}
For $j = 1, \ldots, k$, let $A_j \subseteq \{1, \ldots, n\}$ be the set of $+1$'s in the $j$\textsuperscript{th} row of $M$. Then the product $\tilde \chi_{A_1} \cdots \tilde \chi_{A_k}$ is identically zero iff one of the following holds:
\begin{enumerate}
\item the minimum of $c^T x$ is 2 or greater, or
\item the problem is infeasible.
\end{enumerate}
The product is nonzero iff the minimum $c_* \in [0,2)$. Note that the problem cannot be unbounded since $c^Tx \geq 0$. Of course, we could replace the objective function by the inequality $\sum_i x_i < 2$.

\subsection*{Acknowledgements}
The authors were partially supported by NSF grant DMS0970067. The description of ${\rm Spec}\ C(n)$ and the observation that, because $C(n)$ is reduced, the Fourier Identity can be checked at each prime ideal of $C(n)$ separately, is due to David Speyer. We thank him for this and other related discussions on the identity. We also thank \zeev\ Rudnick for sharing slides on their preprint \cite{ER-GR}, and the referee for many helpful comments.

\appendix

\section{Sums over chains} \label{appendix:chains}

We give the proof of Lemma \ref{lem:chain_sum} involving sums over chains as \ref{sums-chains}(3) below. The authors thank B. Ullery for the proof of \ref{sums-chains}(1).

\begin{lemma} \label{sums-chains}
Given chains $\ml{A}, \ml{B}$, we write $\ml{B} \preceq \ml{A}$ if $\ml{B}$ is a subchain of $\ml{A}$ (we include the `empty chain' with no sets). If $\ml{A} = A_1 \subset \cdots \subset A_k$, we write $k = |\ml{A}|$. Then
\begin{enumerate}
\item For any $n$, $\displaystyle{\sum_{\substack{\ml{A}: 1 \in A_1, \\ A_k \ = \  \{1, \ldots, n\}}} (-1)^{|\ml{A}|} \ = \  \begin{cases} -1 & n=1 \\ 0 & n > 1. \end{cases}}$
\item For any $n$, $\displaystyle{\sum_{\substack{\ml{A}: A_1 \ = \  \eset, \\ A_k \ = \  \{1, \ldots, n\}}} (-1)^{|\ml{A}|} \ = \  (-1)^{n-1}.}$
\item For fixed $\ml{B}$ with $1 \in B_1$, $\displaystyle{\sum_{\substack{\ml{A} \succeq \ml{B}, 1 \in A_1, \\ A_k \ = \  \{1, \ldots, n\}}} (-1)^{|\ml{A}|} \ = \  \begin{cases} (-1)^n & \text{ {\rm if} } B_1 = \{1\}, \\ 0 & \text{ {\rm otherwise}.}\end{cases}}$
\end{enumerate}
\end{lemma}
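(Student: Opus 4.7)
Part (2) is an immediate consequence of Philip Hall's theorem applied to the Boolean lattice $\mathcal{P}([n])$. Hall's formula states that for any finite poset with $\hat 0$ and $\hat 1$, $\mu(\hat 0,\hat 1)=\sum_{k\ge 0}(-1)^k c_k$, where $c_k$ counts strict chains $\hat 0=x_0<\cdots<x_k=\hat 1$. Since $\mu(\eset,[n])=(-1)^n$ in the Boolean lattice, reindexing with $|\ml{A}|=k+1$ gives the sum $-\mu(\eset,[n])=(-1)^{n-1}$.

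For part (1), I would construct an explicit sign-reversing involution on the set of chains being summed. Given $\ml{A}=(A_1\subsetneq\cdots\subsetneq A_k)$ with $1\in A_1$ and $A_k=[n]$, define $\iota(\ml{A})$ by: if $A_1=\{1\}$ and $k\ge 2$, delete $A_1$ (the new first set $A_2\supsetneq\{1\}$ still contains $1$); otherwise $A_1\supsetneq\{1\}$ and we prepend $\{1\}$. One checks directly that $\iota^2=\mathrm{id}$ and $|\iota(\ml{A})|-|\ml{A}|=\pm 1$, so terms cancel in pairs. The only fixed point of $\iota$ is the singleton chain $\ml{A}=\{\{1\}\}$, which forces $[n]=\{1\}$, i.e.\ $n=1$; in that case it contributes $-1$. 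Hence the sum equals $-1$ for $n=1$ and $0$ for $n\ge 2$.

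For part (3), I would decompose each chain $\ml{A}\succeq\ml{B}$ (where $\ml{B}=B_1\subsetneq\cdots\subsetneq B_m$) into segments separated by the prescribed sets: a ``pre'' segment from $A_1$ (with $1\in A_1$) up to $B_1$, intermediate segments from $B_j$ to $B_{j+1}$, and a ``post'' segment from $B_m$ to $[n]$. Since each chain in this decomposition is determined independently, the sum factors as
\[
f(\ml{B})\ =\ (-1)^m\,\alpha(B_1)\,\prod_{j=1}^{m-1}\beta(B_j,B_{j+1})\,\gamma(B_m),
\]
where $\alpha(B_1)$ is the part-(1) sum with $[n]$ replaced by $B_1$, $\beta(S,T)$ and $\gamma(S)$ are sums over strict chains $S=C_1\subsetneq\cdots\subsetneq C_q=T$ and $S=C_1\subsetneq\cdots\subsetneq C_r=[n]$, and the sign $(-1)^m$ accounts for the $m$ boundary sets $B_j$ counted twice in the concatenation. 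By part (1), $\alpha(B_1)$ vanishes unless $B_1=\{1\}$, immediately giving the zero case. When $B_1=\{1\}$, Philip Hall (or part (2)) on the intervals yields $\alpha(\{1\})=-1$, $\beta(S,T)=(-1)^{|T|-|S|+1}$, and $\gamma(S)=(-1)^{n-|S|+1}$ (with the degenerate case $B_m=[n]$ handled separately, giving $\gamma([n])=-1$). Using $|B_1|=1$, the telescoping sum of exponents collapses to a clean $(-1)^n$.

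The main obstacle will be the sign bookkeeping in part (3): verifying that the $(-1)^m$ correction for double-counted $B_j$'s combines correctly with the Philip Hall exponents and with the edge cases $m=1$ and $B_m=[n]$. Once that algebra is written out, everything reduces to the three pieces above.
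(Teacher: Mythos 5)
Your proposal is correct and follows essentially the same structure as the paper's proof: a deletion/prepending involution for (1), an identity about counting chains in the Boolean lattice for (2), and the segment-factorization plus double-counting sign correction $(-1)^m$ for (3). The only genuine deviation is in part (2): the paper establishes $\sum_{\ml{A}}(-1)^{|\ml{A}|}=(-1)^{n-1}$ by an explicit induction on $n$ (inserting the element $n$ into chains on $\{1,\ldots,n-1\}$ and counting the $t-1$ versus $t$ insertion positions to get the recurrence $S_n=-S_{n-1}$), whereas you invoke Philip Hall's theorem and the known value $\mu(\eset,[n])=(-1)^n$ in the Boolean lattice. Both are sound; quoting Hall's theorem is shorter and makes clear that (2) is nothing but the Möbius function of an interval, which is exactly what one needs again in part (3) for the $\beta$ and $\gamma$ factors, so your treatment of (3) becomes slightly more uniform. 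The paper's induction has the advantage of being self-contained. Your sign bookkeeping in (3) checks out: the $m$ boundary sets are each counted twice across the concatenated segments, giving $|\ml{A}|=\bigl(\sum \text{segment lengths}\bigr)-m$, and the exponents $|B_m|-|B_1|-(m-1)$ and $n-|B_m|-1$ telescope with the $(-1)^m\cdot(-1)$ prefactor to $(-1)^n$; the degenerate case $B_m=[n]$ is in fact covered by the same formula ($\gamma([n])=(-1)^{-1}=-1$), so no separate handling is needed.
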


(1) For $n=1$ there is only one possible chain of the desired form, namely $\{1\}$. Otherwise, there is a bijection between chains $\ml{A}$ of the desired form with $A_1 = \{1\}$ and those with $A_1 \supsetneq \{1\}$, by deleting or prepending $\{1\}$ from the beginning of the chain. Since this reverses the parity of $|\ml{A}|$, the sum vanishes. \\

(2) Inductively, consider a chain $\ml{A}' : \eset = A_1' \subset \cdots \subset A_k' = \{1, \ldots, n-1\}$ on $\{1, \ldots, n-1\}$. There are $t-1$ ways of inserting the element $n$ into the chain while keeping $A_1 = \eset$ and the last set equals $\{1, \ldots, n\}$: we can add it into one of the $A_i$, $i=2, \ldots, t$, or we can insert it immediately after $A_i$ as $A_i \cup \{n\}$, for $i = 1, \ldots, t$.

The chains $\ml{A}$ obtained this way contribute $(t-1) \cdot (-1)^{|\ml{A}'|} + t \cdot (-1)^{|\ml{A}'|+1} = (-1)^{|\ml{A}'|+1}$, giving the recurrence
\be
\sum_{\substack{\ml{A}: A_1 = \eset, \\ A_k = \{1, \ldots, n\}}} (-1)^{|\ml{A}|} \ = \  - \sum_{\substack{\ml{A}: A_1 = \eset, \\ A_k \ = \  \{1, \ldots, n-1\}}} (-1)^{|\ml{A}'|}.
\ee
For $n=1$, there is only one such chain, namely $\eset \subset \{1\}$, which has length 2.  \\

(3) Write $\ml{B} = B_1 \subset \cdots \subset B_k$. Choosing $\ml{A} \succeq \ml{B}$ is the same as choosing $k+1$ chains, namely, a chain with $1 \in A_1$ and $A_k = B_1$; then, for each $2 \leq i \leq k-1$, a chain from $B_i$ to $B_{i+1}$, and a chain from $B_k$ to $\{1, \ldots, n\}$. Thus, we factor our sum as
\be
\sum_{\substack{\ml{A} \succeq \ml{B}, 1 \in A_1, \\ A_k \ = \  \{1, \ldots, n\}}} (-1)^{|\ml{A}|} \ = \  (-1)^{|\ml{B}|}
\Big(\sum_{\substack{\ml{A} : 1 \in A_1 \\ A_k = B_1}} (-1)^{|\ml{A}|}\Big) \cdot \Big( \sum_{\substack{\ml{A} : A_1 = B_1 \\ A_k = B_2}}(-1)^{|\ml{A}|}\Big) \cdots \Big(\sum_{\substack{\ml{A} : A_1 = B_k \\ A_k = \{1, \ldots, n\}}} (-1)^{|\ml{A}|}\Big).
\ee
Each $B_i$ is double-counted in the lengths of the chains, so we multiply by $(-1)^k = (-1)^{|\ml{B}|}$.

By parts (1) and (2) above, this gives
\be
\ = \  (-1)^{|\ml{B}|} \left. \begin{cases} (-1) & B_1 =  \{1\} \\ 0 & B_1 \ne \{1\} \end{cases} \right\} \cdot (-1)^{|B_2| - |B_1|-1} \cdots (-1)^{n - |B_k|-1},
\ee
which is $(-1)^n$ when $B_1 = \{1\}$ and 0 otherwise, as desired.

\ \\

\end{document}